\newcommand{\R}{\mathbb{R}}
\newcommand{\Spp}{\mathbb{S}^{p\times p}}
\newcommand{\Rn}{\mathbb{R}^{n}}
\newcommand{\Rp}{\mathbb{R}^{p}}
\newcommand{\Rnp}{\mathbb{R}^{n\times p}} 
\newcommand{\Rpp}{\mathbb{R}^{p\times p}}
\newcommand{\inner}[1]{\left\langle #1 \right\rangle}
\newcommand{\norm}[1]{\left\Vert #1\right\Vert}
\newcommand{\bb}[1]{\mathbb{#1}}
\newcommand{\SR}[1]{\Omega_{#1}}
\newcommand{\ca}[1]{\mathcal{#1}}
\newcommand{\tr}[0]{\mathrm{tr}}
\newcommand{\tX}[0]{\tilde{X}}
\newcommand{\tY}[0]{\tilde{Y}}
\newcommand{\Xdoti}{X_{ i \cdot }}
\newcommand{\Xdotiz}{X_{ i \cdot }\tp}
\newcommand{\Xo}[0]{{X_{ \mathrm{orth} }} }
\newcommand{\ff}{_{\mathrm{F}}}
\newcommand{\tf}{_2}
\newcommand{\fs}{^2_{\mathrm{F}}}
\newcommand{\tp}{^\top}
\newcommand{\A}{\ca{A}}
\newcommand{\prox}{{\mathrm{prox}}}
\newcommand{\Apen}[1]{\left( \frac{3}{2} I_p - \frac{1}{2} {#1}\tp {#1} \right)}
\newcommand{\proxk}{\prox_{\eta_k}}
\newcommand{\SLPG}{{SLPG}}
\newcommand{\SLPGs}{{SLPG }}
\newcommand{\Xk}{{X_{k} }}
\newcommand{\Yk}{{Y_{k} }}
\newcommand{\Zk}{{Z_{k} }}
\newcommand{\Xkp}{{X_{k+1} }}
\newcommand{\st}{\mbox{s.\, t.}}
\newcommand{\revise}[1]{{#1}}
\newtheorem{theo}{Theorem}
\newtheorem{lem}{Lemma}
\newtheorem{prop}{Proposition}
\newtheorem{examples}{Problem}
\newtheorem{cond}{Condition}
\newtheorem{defin}{Definition}
\newtheorem{rmk}{Remark}
\newtheorem{assumpt}{Assumption}
\numberwithin{equation}{section}
\begin{document}
	\title{A Penalty-free Infeasible Approach for a Class of Nonsmooth Optimization Problems over the Stiefel Manifold}
	
	\author{
		{Nachuan Xiao\thanks{State Key Laboratory of Scientific and Engineering Computing, Academy of Mathematics and Systems Science, Chinese  Academy of Sciences, and University of Chinese Academy of Sciences, China, (email: xnc@lsec.cc.ac.cn).} ~
			Xin Liu\thanks{
				State Key Laboratory of Scientific and Engineering Computing, Academy of Mathematics and Systems Science, Chinese Academy of Sciences, and University of Chinese Academy of Sciences, China (email: liuxin@lsec.cc.ac.cn). }~
			and Ya-xiang Yuan\thanks{ State Key Laboratory of Scientific and Engineering Computing, Academy of Mathematics and Systems Science, Chinese Academy of Sciences, China, 
				(email: yyx@lsec.cc.ac.cn). }}
	}
	
	\maketitle
	\begin{abstract}
		Transforming into an exact penalty function model with convex compact constraints yields efficient infeasible approaches for optimization problems with orthogonality constraints. For smooth and $\ell_{2,1}$-norm regularized cases, these infeasible approaches adopt simple and orthonormalization-free updating schemes and show high efficiency in some numerical experiments. However, to avoid orthonormalization while enforcing the feasibility of the final solution, these infeasible approaches introduce a quadratic penalty term, where an inappropriate penalty parameter can lead to numerical inefficiency. 
		Inspired by penalty-free approaches for smooth optimization problems, we proposed a 
		sequential linearized proximal gradient method (\SLPG) for a class of optimization problems with orthogonality constraints and nonsmooth regularization term. This approach alternatively takes tangential steps and normal steps to improve the optimality and feasibility respectively. In \SLPG, the orthonormalization process is invoked only once at the last step if high precision for feasibility is needed, showing that main iterations in \SLPGs are orthonormalization-free. Besides, both the tangential steps and normal steps do not involve the penalty parameter, and thus \SLPGs is penalty-free and avoids the inefficiency caused
		by possible inappropriate penalty parameter. 
		We analyze the global convergence properties of \SLPGs where the tangential steps are inexactly computed. By inexactly computing tangential steps, for smooth cases and $\ell_{2,1}$-norm regularized cases, 
		\SLPGs has a closed-form updating scheme, which leads to cheap tangential steps. 
		Numerical experiments illustrate the advantages of \SLPGs when compared with existing first-order methods. 
	\end{abstract}
	
	\section{Introduction}
	\subsection{Problem description}
	In this paper, we focus on a class of composite 
	optimization problems with orthogonality constraints,
	\begin{equation}\tag{COS}
		\label{Prob_Ori}
		\begin{aligned}
			\min_{X \in \bb{R}^{n\times p} } \quad & f(X) + r(X) \\
			\text{s.t.}\quad &X\tp X = I_p,
		\end{aligned}
	\end{equation}
	where the objective is the summation of
		two functions $f,\, r: \Rnp \mapsto \R$
		satisfying the following blanket assumption.
	\begin{assumpt}[\bf blanket assumption]
		\label{Assumption_1}
		\begin{itemize}
			\item Function $f$ is differentiable  and $\nabla f(X)$ is locally Lipschitz continuous in $\bb{R}^{n\times p}$;
			\item Function $r$ is convex and Lipschitz continuous in $\bb{R}^{n\times p}$;
			\item For any $X, G \in \bb{R}^{n\times p}$ and any $\eta > 0$, the problem 
			\begin{equation*}
				\min_{D \in \bb{R}^{n\times p}}\quad \inner{G, D} + r(D) + \frac{1}{2\eta} \norm{D-X}\fs 
			\end{equation*}
			is of closed-form solution or can be solved efficiently
			by certain iterative approach. 
		\end{itemize}
	\end{assumpt}
	
    The feasible region of the 
		orthogonality constraints $X\tp X = I_p$ 
		is the Stiefel manifold embedded in real matrix space 
		$X \in \ca{S}_{n,p} := \{X \in \bb{R}^{n\times p}| X\tp  X = I_p \}$. 
		We also call it as the Stiefel manifold for brevity. 
	
	The optimization problems of the form \eqref{Prob_Ori} have wide applications in data science and statistics. We mention a few of them in the following.

	\begin{examples}[{\bf Sparse Principal Component Analysis}]
		\label{Example_SPCA}
		Principal component analysis (PCA) is a basic tool 
			in data processing and dimensional reduction.
			It pursues the $p$ leading eigenvectors of the empirical 
			covariance matrix $L$ associated with $N$ 
			samples in $\Rn$. Contemporary datasets often have 
			a new feature that the dimension $n$ is comparable with 
			or even much larger than the samples $N$. At this point,
			we need to take into account the sparsity in the principal 
			components
			for  better representation and consistency. 
			Mathematically, we consider the following sparse PCA 
			model \cite{cai2013sparse,ma2013sparse}, which
			admits a nonsmooth $\ell_1$ norm regularization term.
		\begin{equation}
				\begin{aligned}
					\min_{X \in \bb{R}^{n\times p}}\quad& -\frac{1}{2}\tr\left( X\tp LX \right) + \gamma\norm{X}_1\\
					\st \quad &X\tp X = I_p,
				\end{aligned}
		\end{equation}
		where $\gamma$ is a positive parameter 
			controlling the sparsity.
	\end{examples}

	\begin{examples}[{\bf $\ell_{2,1}$-norm regularized PCA }]
			\label{Example_l21PCA}
			To pursue the sparsity in the features (variables) of the principal 
			components, we can impose the row sparsity to the classical PCA model
			and arrive at the following $\ell_{2,1}$-norm regularized PCA problem
			\cite{ulfarsson2008sparse,cai2013sparse}.
			\begin{equation}\label{SVPCA}
					\begin{aligned}
						\min_{X\in \bb{R}^{n\times p}} \quad &-\frac{1}{2}\tr\left( X\tp LX \right) + \sum_{i = 1}^n \gamma_i \norm{\Xdoti}_{2}\\
						\st \quad & X\tp X = I_p,
					\end{aligned}
			\end{equation}
			where
			$X_{i\cdot}$ and $\gamma_i$ are
			the $i$-th row of matrix $X\in \bb{R}^{n\times p}$ 
			and a positive parameter controlling the row sparsity, respectively, 
			for all $i=1,...,n$.  
			The problem \eqref{SVPCA} is also known as the Coordinate-independent Sparse Estimation \cite{chen2010coordinate}.
		\end{examples}
		
		When the nonsmooth part vanishes, i.e. $r = 0$, the objective function 
		of \eqref{Prob_Ori} reduces to a smooth function. There are
		many applications in this scenario as well, for instance, the 
		discretized Kohn-Sham energy minimization problem arising in material sciences.

	\begin{examples}[Discretized Kohn-Sham Energy Minimization]
		\label{Example_KS}
		Kohn-Sham density functional theory (KSDFT) \citep{Kohn1965Self} is widely used in electronic structure calculation. 
			In the last step of KSDFT, it requires to minimize the 
			following discretized Kohn-Sham energy function 
			over the Stiefel manifold.
		\begin{equation}
			\begin{aligned}
				\min_{X \in \bb{R}^{n\times p}}\quad &  
				\frac{1}{4} \tr\left( X\tp LX \right) + \frac{1}{2}\tr\left( X\tp V_{ion}X \right) + \frac{1}{4} \rho\tp L^{\dagger}\rho + \frac{1}{2}\rho\tp \epsilon_{\mathrm{xc}}(\rho)\\
				\text{s.t.} \quad & X\tp X = I_p,
			\end{aligned}
		\end{equation} 
		where $L\in\bb{R}^{n\times n}$ and diagonal matrix $V_{\mathrm{ion}}\in\bb{R}^{n\times n}$
		refers to the Laplace operator in the planewave basis and discretized local ionic potential, respectively, $\rho := \mathrm{diag}(XX\tp)$ denotes the charge density, and
		$\epsilon_{\mathrm{xc}}:\Re^n\mapsto \Re^n$ stands for the exchange correlation function. 
	\end{examples}
	
		\begin{rmk}
			The blanket assumption is not strict at all, since
			it holds at all the instances we listed above.
			Moreover, it is
			the same as those imposed in \cite{chen2018proximal,huang2019extending,huang2019riemannian}.
		\end{rmk}

	\subsection{Existing methods}
	On minimizing smooth objectives over the 
	Stiefel manifold, there exist several efficient approaches, such as 
	gradient-based methods \cite{manton2002optimization,nishimori2005learning,abrudan2008steepest}, conjugate gradient methods \cite{edelman1998geometry,abrudan2009conjugate}, projection-based methods \cite{Absil2009optimization,dai2019adaptive}, constraint preserving updating scheme \cite{wen2013feasible,jiang2015framework}, Newton methods \cite{hu2018adaptive}, trust-region methods \cite{absil2007trust}, first-order methods with multipliers correction framework \cite{gao2018new}, infeasible methods \cite{gao2019parallelizable,xiao2020class}, etc.
	Interested readers are referred to the book \cite{Absil2009optimization},
	the survey paper \cite{hu2020} and the references therein. 
	It is worth mentioning that several infeasible approaches 
	have been proposed and show their high efficiency 
		in solving optimization problems over the Stiefel manifold. 
		The ALM-based approaches PLAM and PCAL\cite{gao2019parallelizable} 
		update the Lagrangian multipliers by an explicit expression
		derived by the first-order stationarity conditions.
		Such explicit expression involves the gradient of the objective,
		and hence these algorithms can only tackle the problems
		with smooth objective function. Gao et al. \cite{GHKL2020}
		provide a clear route of applying PCAL to the electronic structure
		calculation. 
		Xiao et al. \cite{xiao2020class} present 
		a novel penalty function with compact convex constraints (PenC). 
		In the framework of PenC, they propose approximate
		projected gradient and Newton methods PenCF and PenCS, respectively. 
		Hu et al. \cite{hu2020anefficiency} propose 
			an unconstrained penalty model for sparse dictionary learning and dual principal component pursuit.
	
		However, most of the above-mentioned approaches can hardly be
		applied to the problem with nonsmooth objective function 
		directly.
		The approaches for solving \eqref{Prob_Ori} with $r\neq 0$
		are not as many as those for smooth minimization.
		We review a few representative ones in the following.
	
	The first class of approaches 		
		are based on the splitting and alternating. 
		The splitting method for orthogonality constrained problem (SOC) \cite{lai2014splitting} introduces auxiliary variables to split the objective function and the orthogonality constraints, and then 
		applies the alternating direction method of multipliers (ADMM)
		to solve the equivalent splitting model. 
		The subproblem related to the objective function lacks
		closed-form solution in general which is a main limit of SOC.
		Meanwhile, Rosman et al.
		\cite{rosman2014augmented-lagrangian} propose a variable splitting framework based on augmented Lagrangian method for problems on imaging processing, which can also be extended to solve optimization problems on $\ca{S}_{n,n}$. 
	Besides,  Chen et al. \cite{chen2016augmented} propose a proximal alternating minimization approach based on augmented Lagrangian method (PAMAL).
	Different from SOC, PAMAL develops an equivalent model by introducing two blocks of variables to split the orthogonality constraints, smooth and nonsmooth terms apart. PAMAL invokes the augmented Lagrangian method (ALM) framework
		and block coordinate descent (BCD) method  to solve the split model and the subproblems related to the primal variables, respectively.
	
	The second classes of approaches apply the proximal gradient
		method to tackle the nonsmooth term in \eqref{Prob_Ori}. 
		Chen et al. \cite{chen2018proximal} propose the Riemannian proximal gradient method (ManPG) and its accelerated version, ManPG-Ada.  
		The main iteration, which occupies the main computational cost
		of ManPG or ManPG-Ada, is to compute the following proximal mapping restricted to the tangent space $\ca{T}_{\Xk}:= \{ \Delta \in \bb{R}^{n\times p} | \Delta\tp \Xk + \Xk\tp \Delta = 0 \}$
		of the Stiefel manifold.
		\begin{equation}
			\label{Eq_Subproblem_ManPG}
			\min_{D \in \Xk + \ca{T}_{\Xk}} \quad \inner{D, \nabla f(\Xk)} + r(D) + \frac{1}{2\eta_k } \norm{D-\Xk}\fs,
		\end{equation}
	where $\eta_k >0$ is the stepsize.
		The subproblem \eqref{Eq_Subproblem_ManPG} is a nonsmooth convex optimization problem without closed-form solution in general and
		can be solved by the  semi-smooth Newton method (SSN) \cite{Sun2002}.
	Their numerical experiments show that both ManPG and ManPG-Ada outperform the existing splitting and alternating based approaches SOC and PAMAL.
	Recently, Huang et al. present a 
	Riemannian version of fast iterative shrinkage-thresholding algorithm with safeguard (AManPG) in \cite{huang2019extending}, which exhibits the accelerated behavior over the Riemannian proximal gradient method. Nevertheless, no convergence rate analysis is presented for AManPG. 
	They also propose
		a modified Riemannian proximal gradient method (RPG) and its accelerated version (ARPG), respectively, in \cite{huang2019riemannian}.
	They show the $\ca{O}(\frac{1}{k})$-convergence rate of RPG and ARPG.
	However, the proximal mapping subproblems in both RPG and ARPG are 
	even more expensive to solve than ManPG
		due to their nonsmoothness and nonconvexiety. 
		Thus, ARPG and RPG are usually slower than AManPG and ManPG-Ada in solving 
		optimization problems on the Stiefel manifold as
		illustrated in \cite{huang2019riemannian}.
	
	The key motivation of PenC is to utilize the explicit expression
		of the Lagrangian multipliers at first-order stationary 
		points, which involves the Euclidian gradient of the objective function.
		Hence, it can hardly be generalized to the nonsmooth case,
		in which the gradient of the objective function is absent.
		Xiao et al. \cite{xiao2020l21} extend PenC to a special 
		case of \eqref{Prob_Ori} in which $r$ takes the $\ell_{2,1}$-norm
		like \eqref{SVPCA} in Problem \ref{Example_l21PCA}.
		Although the subdifferential of $r$ in this case is set-valued,
		the term $X\tp \partial r(X)$ is single-valued. Based on this observation,
		the authors of \cite{xiao2020l21} propose the corresponding PenC model
		and a proximal gradient method called PenCPG.
	In PenCPG, the proximal subproblem is of closed-form solution, which 
		leads to its numerical superiority when compared with the existing Riemannian proximal gradient approaches in solving $\ell_{2,1}$-norm regularized problems.
	
	However, if the nonsmooth term $r$ is not a $\ell_{2,1}$-norm, 
	the term $X\tp \partial r(X)$ is set-valued in general.
	Hence, the Lagrangian multipliers at any stationary point no longer
	have closed-form expression. Therefore, the PenC model does not apply 
	to \eqref{Prob_Ori} in general.
	
	Another limitation of PenC based approaches is 
	that their numerical performances are
	related to the choice of the penalty parameter,
	as reported in \cite{gao2018new,xiao2020class,xiao2020l21}.
		But slow convergence or even divergence occurs, if the penalty parameter
		is out of such range. The authors in \cite{gao2019parallelizable} provide heuristic way to select 
		the penalty parameter without theoretical guarantee.

	\subsection{Motivation}
	
	In order to develop an efficient infeasible approach
		for solving \eqref{Prob_Ori} which is not sensitive
		to the penalty parameter, we borrow the idea from 
		a class of sequential quadratic programming (SQP) approaches 
		developed for solving the equality constrained smooth nonlinear optimization problems. These approaches include
		the inexact-restoration method  proposed
		by \citet{martinez2001inexact}, the nonmonotone trust-region based
		SQP methods proposed by \citet{ulbrich2003non}, \citet{gould2010nonlinear},
		\citet{liu2011a}, and \citet{chen2019a}, respectively. 
		In particular, the authors in \cite{gould2010nonlinear} and
		\cite{liu2011a} provide inexact strategies to tackle the
		SQP subproblems.
		Besides, the approaches presented in \cite{ulbrich2004on}
		and \cite{shen2012a} utilize the  nonmonotone filter techniques.
		However, all of these approaches invoke the second-order oracle 
		or use the first-order information to approximate the Hessian
		of the objective function or its Lagrangian.
		Hence, these approaches are only valid in the smooth problems. 
		To the best of our knowledge, there are few approaches 
		for solving the nonsmooth optimization problems such as \eqref{Prob_Ori}
		by adopting the SQP-like techniques.
	
		Our main idea is to reduce the objective function in the tangent space 
		and to improve the feasibility in the normal space alternatively.
		We first approximate the objective function by a proximal linearized model
		and minimize it on an affine subspace spanned at the current iterate which is parallel to
		a tangent space of the Stiefel manifold. We then invoke 
		a normal step which searches in the range space of the Jacobian of 
		the constraints $X\tp X - I_p = 0$ to
		reduce the feasibility violation.
	
	\subsection{Contributions} 
	We propose a first-order penalty-free infeasible approach,
		called sequential linearized proximal gradient method (\SLPG),
		for solving a class of composite optimization problems with orthogonality constraints \eqref{Prob_Ori}. 
		In each iteration, \SLPGs 
		takes the tangential and the normal steps one after the other,
		both of which do not involve any orthonormalization procedure or updating
		of penalty parameters. Consequently, \SLPGs enjoys high scalability and avoids the numerical inefficiency from inappropriately selected penalty parameters.
		We discuss how to solve the tangential subproblems inexactly, which is different 
		from the existing approaches since the iterates are no longer feasible.
		We provide a novel idea to conduct the normal steps which simultaneously have both low computational cost and
		fast convergence to the feasible region.
		To combine the tangential and normal steps together, the subsequence convergence as well as the worst-case complexity of \SLPGs can be established under  mild assumptions.
		Furthermore, when the nonsmooth term of \eqref{Prob_Ori} has a special structure, 
		i.e. the Lagrange multipliers with respect 
		to the orthogonality constraints are of closed-form expressions, the tangential steps of \SLPGs
		enjoy closed-form approximate solutions, and hence an inner loop to solve the tangential subproblem
		is waived. 
		The efficiency and robustness of \SLPGs are illustrated by a set of numerical
		experiments on the sparse PCA, the $\ell_{2,1}$-norm regularized PCA, and the discretized Kohn-Sham energy minimization
		problems. \SLPGs visibly outperforms the state-of-the-art feasible approaches in solving those
		nonsmooth problems. It exhibits its prominent robustness when compared with the 
		existing infeasible approaches.

	\subsection{Notations and Organization} 
	Let $\Spp:=\{A\mid A\in\Rpp,\, A=A\tp \}$
		be the set containing all the real symmetric $p\times p$ matrices. 
		We use $I_p$ to denote the $p \times p$ identity matrix.
	The entry in the $i$-th row and the $j$-th column of a matrix $X\in\Rnp$
	is denoted by $X_{ij}$. 
		For brevity, we use
		$\norm{X}_{1}$ to represent the component-wise $\ell_1$ norm,
		i.e. $\norm{X}_{1} = \sum_{i,j} |X_{ij}|$.
	The Euclidean inner product of two matrices $X, Y\in \bb{R}^{n\times p}$ is defined as $\inner{X,Y}=\tr(X\tp Y)$,
	where $\tr(A)$ is the trace of a matrix $A\in \bb{R}^{p\times p}$.
	$\norm{\cdot}_2$ and $\norm{\cdot}\ff$ represent the $2$-norm and the Frobenius norm, respectively. 
	For a positive semi-definite matrix $A\in\Rpp$, $A^{\frac{1}{2}}$ refers to the unique positive semi-definite
		matrix satisfying $A^{\frac{1}{2}}A^{\frac{1}{2}}=A$ and $A^{-\frac{1}{2}}$ is its inverse.
	
	The rest of this paper is organized as follows. 
	In Section 2, we present the detailed description of \SLPGs, and introduce the practical implementations on computing the tangential and the normal steps.  
	We establish the convergence analysis for \SLPGs in Section 3. 
		The preliminary numerical experiments are reported in Section 4. Finally, we conclude this paper in Section 5.

	\section{Algorithm Description}

	In this section, we mainly propose the framework of \SLPG.
		We first provide some necessary preliminary definitions. 
		Then we present the mathematical formulations of the tangential and normal subproblems
		and introduce how to solve them respectively.
		Finally, we demonstrate the complete algorithm framework.

	\subsection{Preliminary}
	

	We first review the definition of Clark's subdifferential \cite{clarke1990optimization} for nonsmooth functions.  
	\begin{defin}[\cite{clarke1990optimization,rockafellar2009variational}]
		
		For any Lipschitz continuous $f$ on $\bb{R}^{n\times p}$,  the generalized directional derivative of $f$ in the  direction $D\in \bb{R}^{n\times p}$ is defined by, 
		\begin{equation}
			f^o(X, D) := \mathop{\lim\sup}\limits_{Y \to X, t \to 0^+} \frac{f(Y+tD)-f(Y)}{t}. 
		\end{equation}
		Based on generalized directional derivative of $f$, the Clark's subdifferential 
		(``subdifferential" for brevity)
		of $f$ is defined by,
		\begin{equation}
			\partial f(X) := \left\{ W \in \bb{R}^{n\times p}| \inner{W, D} \leq f^o(X,D) \text{ for any }D \in \bb{R}^{n\times p} \right\}.
		\end{equation}
	\end{defin}

	
	
	Next we describe the stationarity of \eqref{Prob_Ori}, which is same as \cite[Definition 3.3]{chen2018proximal}.
	\begin{defin}
		\label{Defin:FOSP_Ori}
		Under the Assumption \ref{Assumption_1}, we call $X$ a first-order stationary point of \eqref{Prob_Ori} if and only if there exists $W \in \partial r(X)$ such that 
		\begin{equation}\label{new1}
				\left\{
				\begin{aligned}
					&\nabla f(X) + W - X\Phi\left(X\tp \left( \nabla f(X) + W \right)\right) = 0,\\
					& X\tp X = I_p.
				\end{aligned} 
				\right.
		\end{equation}
	\end{defin}

	\begin{defin}
		\label{Defin_operator}
		A operator $T: \bb{R}^{p\times p} \mapsto \bb{R}^{p\times p}$ is nonexpansive 
			if and only if there exists a constant $c\in [0,1]$ such that
			\begin{equation*}
				\norm{T(\Lambda_1) - T(\Lambda_2)}\ff \leq c\norm{\Lambda_1 - \Lambda_2}\ff
			\end{equation*} 
			holds for any $\Lambda_1, \Lambda_2 \in \bb{R}^{p\times p}$.
		%
	\end{defin}

	\subsection{Computing the tangential step}\label{subsec:1}
		Suppose $\Xk$ is the current iterate, we define the affine subspace $\A_k$ as
	$$\A_k :=\left\{ X \in \bb{R}^{n\times p} \left|  \Phi(X\tp \Xk) = \Xk\tp \Xk \right. \right\}.$$
	Here $\Phi: \bb{R}^{p\times p} \to \bb{R}^{p\times p}, \Phi(M) = \frac{M+M\tp}{2}$ is an operator that symmetrize the square matrices in $\bb{R}^{p\times p}$. 
	To reduce the function value, we minimize the following proximal linearized approximation 
	of the objective function with stepsize $\eta_k$  on the space $\A_k$.
		\begin{equation}
			\label{Subproblem_tangential}
			\begin{aligned}
				\mathop{\min}_{D\in \A_k} \inner{\nabla f(\Xk), D} + r\left(D \right) + \frac{1}{2\eta_k}\norm{D - \Xk}\fs.
			\end{aligned}
		\end{equation}
		We call \eqref{Subproblem_tangential} the tangential subproblem for convenience hereinafter.
		Different with the tangential step in \cite{chen2018proximal}, the subproblem 
		\eqref{Subproblem_tangential}
		is constructed on an infeasible point $\Xk$.
		
		By simple calculations, we can obtain 
		the following KKT condition of the convex optimization problem \eqref{Subproblem_tangential}.
		\begin{equation}
			\label{Eq_subproblem_optimality}
			\left\{\begin{aligned}
				&0 \in  \nabla f(\Xk) - \Xk\Lambda + \partial r(D) + \frac{1}{\eta_k}(D - \Xk), \\
				& \Phi( {D} \tp \Xk) = \Xk\tp \Xk, 
			\end{aligned}\right.
		\end{equation}
		where 
		$\Lambda \in \Spp$
		is the Lagrangian multiplier of the linear constraint $D\in \A_k$.
		
		Once $\Lambda$ is fixed, the first relation in \eqref{Eq_subproblem_optimality} determines
		\begin{eqnarray*}
			D = \prox_{\eta_k}(\nabla f(\Xk) - \Xk\Lambda; \Xk),
		\end{eqnarray*}
		where the proximal mapping $\proxk: \Rnp \otimes \Rnp \mapsto \Rnp$ is defined by 
		$$\proxk(G; \Xk):= \mathop{\arg\min}_{D \in \bb{R}^{n\times p}}  \inner{G, D} + r(D ) + \frac{1}{2\eta_k} \norm{D-\Xk}\fs.$$
		Then it is clear that the KKT condition \eqref{Eq_subproblem_optimality} 
		is equivalent to the nonlinear equation $E(\Lambda)=0$, where
		\begin{eqnarray*} 
			E(\Lambda):=\Phi\left( {\left(\prox_{\eta_k}(\nabla f(\Xk) - \Xk\Lambda; \Xk) - \Xk\right)} \tp \Xk\right).
		\end{eqnarray*}
		This equation can be rewritten as the fixed point equation
		\begin{equation}\label{Eq_T}
			\Lambda - tE(\Lambda) = \Lambda, \quad \mbox{where\,} t>0.
		\end{equation}
		
		We adopt the following Arrow-Hurwicz algorithm proposed by \citet{beale1959studies}
		to solve \eqref{Eq_T}.

		\begin{algorithm}[htbp]
			\begin{algorithmic}[1]   
				\Require Input data: current iterate $\Xk$, parameter $\eta_k$;
				\State Choose initial guess $\Lambda_0\in\mathbb{S}^{p\times p}$, set $j:=0$;
				\While{not terminate}
				\State Calculate the proximal mapping: 
				$D_j = \prox_{\eta_k}(\nabla f(\Xk) - \Xk\Lambda_j; \Xk)$;
				\State Main update: $\Lambda_{j+1} = \Lambda_j - \frac{1}{\eta_k}E(\Lambda_j)$;
				\State Set $j:=j+1$;
				\EndWhile
				\State Return $\Yk := D_{j}$. 
			\end{algorithmic}  
			\caption{Fixed point iteration}  
			\label{Alg:FPI}
		\end{algorithm}
		
		Chambolle et al. \cite{chambolle2011a} have provided 
		an $\ca{O}\left( \frac{1}{k} \right)$ convergence rate of the Arrow-Hurwicz algorithm locally.
		Later on, He et al. \cite{he2014on} have shown that the Arrow-Hurwicz algorithm, as a special case of  
		their primal-dual hybrid gradient algorithm (PDHG), enjoys an $\ca{O}\left( \frac{1}{k} \right)$ convergence rate in the ergodic sense under mild conditions containing our case. 
		
		In our infeasible framework, we actually do not need an accurate solution 
		to the tangential subproblem
		\eqref{Subproblem_tangential}. More specifically, in Algorithm \ref{Alg:FPI}, we adopt the 
		following terminating condition for the residual. 
		
		\begin{cond}
			\label{Assumption_2} 
			There exist $C>0$ such that $\Yk\in \Rnp$ returned by
			Algorithm \ref{Alg:FPI} satisfy 
			\begin{equation}
				\label{Eq_errors_in_tangential}
				\norm{\Phi\left(\left(\Yk-\Xk\right)\tp \Xk\right)}\ff \leq c\eta_k 
				\norm{\Xk\tp\Xk - I_p}\ff . 
			\end{equation}
		\end{cond}
	
	\subsection{A practical inexact tangential step in special cases}
	\label{subsection_closed_form}
		In \cite{xiao2020class} and \cite{xiao2020l21}, it is shown that
		for two special cases of \eqref{Prob_Ori} with $r(X)=0$ and $r(X)=\sum_{i = 1}^n \gamma_i \norm{\Xdoti}_{2}$,
		the Lagrangian multipliers have explicit expressions
		\begin{eqnarray}\label{mult:1}
			\Lambda(X) &=& \Phi(X\tp \nabla f(X))\quad \mbox{and}\\
			\label{mult:2}
			\Lambda(X) &=& \Phi(X\tp \nabla f(X)) + \sum_{i = 1}^n \gamma_{i} S(\Xdotiz),
		\end{eqnarray}
		respectively, at any first-order stationary point, where $S$ is defined by
		\begin{eqnarray}\label{eq:Sdef}
			S(x):=\left\{
			\begin{array}{cc}
				\frac{x x\tp}{\norm{x}_2}, & \mbox{if\,} x\neq 0;\\
				0_p, & \mbox{otherwise,}
			\end{array}
			\right.
		\end{eqnarray}
		and $0_p$ is the zero vector in $\Rp$. 
		
		Here we can propose an alternative way to inexactly solve the tangential subproblem \eqref{Subproblem_tangential} other than Algorithm \ref{Alg:FPI} with Condition \ref{Assumption_2} 
		by using the expressions \eqref{mult:1} and \eqref{mult:2} to estimate the multipliers of 
		\eqref{Subproblem_tangential} and then get the proximal mapping. Namely, we adopt the following
		two step algorithm.
		
		\begin{algorithm}[htbp]
			\begin{algorithmic}[1]   
				\Require Input data: current iterate $\Xk$, parameter $\eta_k$;
				\State If $r(X)=0$, calculate $\Lambda$ by \eqref{mult:1} with $X=\Xk$;
				\State If $r(X)=\sum_{i = 1}^n \gamma_i \norm{\Xdoti}_{2}$, 
				calculate $\Lambda_k$ by \eqref{mult:2} with $X=\Xk$;
				\State Calculate the proximal mapping: 
				$Y_k = \prox_{\eta_k}(\nabla f(\Xk) - \Xk\Lambda_k; \Xk)$;
				\State Return $Y_k$. 
			\end{algorithmic}  
			\caption{Using explicit expressions}  
			\label{Alg:EPR}
		\end{algorithm}

	\subsection{Computing the normal step}\label{subsec:2}
After obtaining $\Yk$, an inexact solution of the tangential subproblem, we need 
		to consider a normal step to reduce the feasibility violation.
		A usual way to realize it is to pull this intermediate point back to the Stiefel manifold
		through certain projection, i.e. orthonormalization process, such as the QR decomposition,
		the polar decomposition, and so on.
		As we know, orthonormalization is usually unscalable and expensive when $p$ is large.
		%
		An accurate normal step usually does not help much for the overall
		performance as the tangential step is an inexact solution
		of a linear approximate model. 
		Therefore, we consider to compute the orthonormalization inexactly
		to balance the accuracies of the tangential and the normal steps.
		A parallelizable algorithm proposed in \cite{higham1994APA} computes the polar decomposition 
		by adopting the Pad\'e approximant, whose main computational cost can be attributed to the inverse of  a
		series of $p\times p$ matrices which can be realized by  solving linear equations.

		The Taylor expansion of $z^{-\frac{1}{2}}$ at $z=1$ to order one is 
		$z^{-\frac{1}{2}} = 1 - \frac{1}{2}(z-1) + \ca{O}((z-1)^2).$
		Let $z=X\tp X$, we have
		\begin{equation*}
			\norm{(X\tp X)^{-\frac{1}{2}} - \Apen{X}}\ff = \ca{O}\left(\norm{X\tp X- I_p}\fs\right). 
		\end{equation*}
		Hence, the polar decomposition at the intermediate iterate $\Yk(\Yk\tp \Yk)^{-\frac{1}{2}}$ can be approximated
		by the following normal step
	\begin{equation}
		\label{Subproblem_normal}
		\Xkp = \Yk \Apen{\Yk}.
	\end{equation}
	
	Next we show how the above normal step reduce the feasibilit violation.
	\begin{lem}
		\label{Le_bound_X}
		For any $X \in \bb{R}^{n\times p}$ satisfying $\norm{X\tp X - I_p}\tf \leq \frac{1}{4}$, let $\hat{X}:= X\Apen{X}$, then it holds that
		\begin{equation*}
			\norm{\hat{X}\tp \hat{X} - I_p}\ff \leq \frac{13}{16}\norm{X\tp X - I_p}\fs.
		\end{equation*}
	\end{lem}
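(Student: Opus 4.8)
The plan is to reduce everything to a single symmetric matrix and to exploit the fact that the coefficients $\tfrac{3}{2}, -\tfrac{1}{2}$ appearing in $\Apen{X}$ are chosen precisely so that the first-order feasibility error cancels. First I would set $E := X\tp X - I_p$, which is symmetric, and observe that $\Apen{X} = I_p - \tfrac{1}{2}E$. Since $\hat{X} = X\Apen{X}$ and $\Apen{X}$ is symmetric, I would write
$\hat{X}\tp \hat{X} = \Apen{X}\,(X\tp X)\,\Apen{X} = \bigl(I_p - \tfrac{1}{2}E\bigr)\bigl(I_p + E\bigr)\bigl(I_p - \tfrac{1}{2}E\bigr)$. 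All three factors are polynomials in $E$ and hence commute, so the product expands cleanly.

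Carrying out the expansion, the terms linear in $E$ cancel, and I expect to obtain $\hat{X}\tp \hat{X} = I_p - \tfrac{3}{4}E^2 + \tfrac{1}{4}E^3$, so that $\hat{X}\tp \hat{X} - I_p = E^2\bigl(-\tfrac{3}{4} I_p + \tfrac{1}{4}E\bigr)$. This is the crucial identity: the residual is quadratic in $E$, which is exactly what produces the quadratic feasibility contraction asserted in the conclusion.

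It then remains to estimate the Frobenius norm of this product. I would apply the submultiplicativity $\norm{AB}\ff \le \norm{A}\ff\norm{B}\tf$ to peel off the scalar-bounded factor, obtaining $\norm{\hat{X}\tp \hat{X} - I_p}\ff \le \norm{E^2}\ff\,\norm{-\tfrac{3}{4} I_p + \tfrac{1}{4}E}\tf$. For the first factor, Frobenius submultiplicativity gives $\norm{E^2}\ff \le \norm{E}\fs$. For the second, the triangle inequality together with the hypothesis $\norm{E}\tf \le \tfrac{1}{4}$ yields $\norm{-\tfrac{3}{4} I_p + \tfrac{1}{4}E}\tf \le \tfrac{3}{4} + \tfrac{1}{4}\norm{E}\tf \le \tfrac{3}{4} + \tfrac{1}{16} = \tfrac{13}{16}$. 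Multiplying the two estimates delivers the claimed bound $\norm{\hat{X}\tp \hat{X} - I_p}\ff \le \tfrac{13}{16}\norm{X\tp X - I_p}\fs$.

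The computation is elementary once the algebra is set up, and the mild hypothesis $\norm{E}\tf \le \tfrac{1}{4}$ enters only to keep the spectral norm of the coefficient $-\tfrac{3}{4} I_p + \tfrac{1}{4}E$ below $\tfrac{13}{16}$. The only point requiring genuine care is the bookkeeping between the two norms, since the hypothesis is stated in the spectral norm $\norm{\cdot}\tf$ while the conclusion is in the Frobenius norm $\norm{\cdot}\ff$; the main obstacle is therefore to organize the estimates so that the spectral bound feeds the scalar factor while the Frobenius norm carries the quadratic-in-$E$ factor.
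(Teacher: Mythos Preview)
Your argument is correct and is essentially identical to the paper's proof: both expand $\hat X^\top \hat X - I_p$ as a polynomial in $E = X^\top X - I_p$, recognize the factorization $E^2\bigl(-\tfrac{3}{4}I_p + \tfrac{1}{4}E\bigr)$ (the paper writes this as $-\bigl(I_p - \tfrac{1}{4}X^\top X\bigr)(X^\top X - I_p)^2$, which is the same thing), and bound the spectral norm of the linear factor by $\tfrac{13}{16}$ via the triangle inequality and the hypothesis $\norm{E}_2 \le \tfrac{1}{4}$. The only cosmetic difference is that the paper keeps everything in terms of $X^\top X$ rather than introducing the shorthand $E$.
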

	\begin{proof}
		It directly follows from the condition $\norm{X\tp X - I_p}\tf \leq \frac{1}{4}$ that
		\begin{equation*}
			\norm{I_p - \frac{1}{4}X\tp X}_2 = \norm{ \frac{3}{4}I_p + \frac{1}{4}(I_p - X\tp X) }_2 \leq \frac{3}{4} + \frac{1}{4} \norm{X\tp X - I_p}_2 \leq \frac{13}{16}. 
		\end{equation*}
		%
		Together with the definition of $\hat{X}$, we have
		\begin{equation}
			\begin{aligned}
				{}&\norm{\hat{X}\tp \hat{X} - I_p}\ff = \norm{X\tp X - I_p + (I_p - X\tp X) X\tp X + \frac{1}{4}X\tp X(X\tp X - I_p)^2 }\ff\\
				={}& \norm{ \left(I_p - \frac{1}{4}X\tp X\right)(X\tp X - I_p)^2 }\ff
				\leq \frac{13}{16}\norm{(X\tp X - I_p)^2}\ff \leq \frac{13}{16}\norm{X\tp X - I_p}\fs.
			\end{aligned}
		\end{equation}
	\end{proof}

	\subsection{Algorithm}
	
	Now, we are ready to present the framework of our SLPG algorithm which alternatively
		takes the tangential and the normal steps introduced in the Subsections \ref{subsec:1}, \ref{subsec:2}, 
		respectively.
		
		\begin{algorithm}[!htbp]
			\caption{ Sequential Linearized Proximal Gradient method (\SLPG)}   
			\label{Alg_MOrthPen}
			\begin{algorithmic}[1]   
				\Require Input data:  functions $f$ and $r$;
				\State Choose initial guess $X_0$, set $k:=0$; 
				\While{not terminate}
				\State Choose parameter $\eta_k$;
				\State Solve the tangential subproblem \eqref{Subproblem_tangential} inexactly
				to make Condition \ref{Assumption_2} hold by Algorithm \ref{Alg:FPI}, and obtain $\Yk$;
				\State Compute  the normal step  \eqref{Subproblem_normal}, and obtain $\Xkp$;
				\State Set $k:=k+1$;
				\EndWhile
				\If{need post-process}
				\State Compute $X_k:= U_k V_k\tp$, where $\Xk = U_k\Sigma_kV_k\tp$ is the singular value decomposition (SVD)
				of $\Xk$ in economic size;
				\EndIf
				\State Return $X_k$.
			\end{algorithmic}  
		\end{algorithm}

		The post-process stated in Line 9 of Algorithm \ref{Alg_MOrthPen} pursues an accurate feasible solution if necessary.
		As shown later in theoretical and numerical analyses, such post-process does not affect the substationarity much. In addition, it can further reduce the function value 
		while decreasing the feasibility violation to machine precision.

		\begin{rmk}
			It is also worth mentioning 
			that if the normal step in \SLPGs takes the orthonormalization process, 
			the sequence $\{\Xk\}$ generated by \SLPGs is on the Stiefel manifold. In addition, 
			the tangential subproblem is strictly on the tangent space of the Stiefel manifold at $\Xk$.
			By choosing suitable parameter $c$, \SLPGs reduces to the existing approach ManPG \cite{chen2018proximal}. 
			In other word, ManPG can be regarded as a special variant of \SLPGs in which both of the tangential and 
			the normal steps are computed more precisely. 
		\end{rmk}

	\section{Global Convergence of \SLPG}
	In this section, we first establish the global convergence of \SLPGs without the post-process by constructing a merit function and evaluating the sufficient function value reduction. For convenience, when we mention Algorithm \ref{Alg_MOrthPen}
		in the first two subsections in this section, the post-process is
		switched off.
		Then we demonstrate that the post-process provides further function value reduction.  
		For  convenience, we define the following constants at the very beginning:
		\begin{eqnarray*}
			L_f &:=& \sup\limits_{X,\,Y\in\SR{1/2}} 
			\frac{| f(X) - f(Y)|}{\norm{X-Y}\ff} = 
			\sup\limits_{X\in\SR{1/2}} ||\nabla f(X)||\ff,\\
			L_{f'} &:=& \sup\limits_{X,\,Y\in\SR{1/2}} 
			\frac{\norm{\nabla f(X) -\nabla f(Y)}\ff}{\norm{X-Y}\ff},\\
			L_r &:=& \sup\limits_{X,\,Y\in\SR{1/2}} 
			\frac{|r(X) -r(Y)|}{\norm{X-Y}\ff},
		\end{eqnarray*}
		where $\SR{r}:=\{X\mid \norm{X\tp X - I_p}\ff\leq r\}$, for any given
		$r\geq 0$.
		We also introduce a new assumption on the parameter sequence $\{\eta_k\}$.
	   \begin{assumpt}\label{Assumption_3}
			Assume that the parameters in Algorithm \ref{Alg_MOrthPen}
			satisfies $\eta_k\in\left[\frac{\tilde{\eta}}{2},\tilde{\eta}\right]$
			for all $k=0,1,...$, 
			where
			$$\tilde{\eta} \,=\, \min\left\{\frac{1}{6(1+c)(L_f + L_r + c +1)},
			\frac{1}{2L_{f'}+8(L_f + L_r)+3}\right\},$$
			and $c$ is defined in Condition \ref{Assumption_2}.
		\end{assumpt}
	
	\subsection{Preliminary properties of the iterate sequences}
	We first demonstrate some properties of the iterate sequences $\{\Xk\}$ and $\{\Yk\}$ generated by \SLPGs 
		including the boundedness and the reduction on the 
		feasibility under a mild assumption on the initial guess.	
	\begin{lem}
		\label{Le_bound_Y}
		Suppose the sequences $\{\Xk\}$ and $\{\Yk\}$ are generated by Algorithm \ref{Alg_MOrthPen}. 
		Then, it holds that 
		\begin{equation}
			\norm{\Yk\tp \Yk - I_p}\ff \leq \left(1+2c \eta_k\right)\norm{\Xk\tp \Xk - I_p}\ff   + \norm{\Yk-\Xk}\fs.
		\end{equation}
	\end{lem}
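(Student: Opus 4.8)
The plan is to expand the feasibility residual of $\Yk$ around $\Xk$ and then control each resulting piece by the triangle inequality, reserving Condition \ref{Assumption_2} for the cross term. First I would write $\Yk = \Xk + (\Yk - \Xk)$ and substitute into $\Yk\tp\Yk - I_p$. Collecting terms yields the algebraic identity
\[
\Yk\tp\Yk - I_p = (\Xk\tp\Xk - I_p) + 2\Phi\!\left((\Yk-\Xk)\tp\Xk\right) + (\Yk-\Xk)\tp(\Yk-\Xk),
\]
where I use that $\Xk\tp(\Yk-\Xk) + (\Yk-\Xk)\tp\Xk = 2\Phi((\Yk-\Xk)\tp\Xk)$ by the definition of the symmetrizing operator $\Phi$.

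Next I would apply the triangle inequality in the Frobenius norm to split $\norm{\Yk\tp\Yk - I_p}\ff$ into three contributions. The first is exactly $\norm{\Xk\tp\Xk - I_p}\ff$. The second, $2\norm{\Phi((\Yk-\Xk)\tp\Xk)}\ff$, is precisely the quantity controlled by Condition \ref{Assumption_2}, which bounds it by $2c\eta_k\norm{\Xk\tp\Xk - I_p}\ff$; adding this to the first term produces the factor $(1 + 2c\eta_k)$ in front of $\norm{\Xk\tp\Xk - I_p}\ff$.

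The only remaining step is to bound the quadratic term $\norm{(\Yk-\Xk)\tp(\Yk-\Xk)}\ff$ by $\norm{\Yk-\Xk}\fs$. This is the one place requiring a small observation rather than direct substitution: writing $A := \Yk - \Xk$ with singular values $\sigma_1,\dots,\sigma_p$, the matrix $A\tp A$ has eigenvalues $\sigma_i^2$, so $\norm{A\tp A}\ff = (\sum_i \sigma_i^4)^{1/2} \leq \sum_i \sigma_i^2 = \norm{A}\fs$, using $\sum_i \sigma_i^4 \leq (\sum_i \sigma_i^2)^2$ for nonnegative $\sigma_i$. Combining the three bounds gives the claimed inequality. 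I do not anticipate any genuine obstacle here; the proof is a short expansion plus one elementary norm inequality, with Condition \ref{Assumption_2} supplying the cross-term estimate.
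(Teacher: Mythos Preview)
Your proof is correct and follows essentially the same route as the paper: both expand $\Yk\tp\Yk - I_p$ via $\Yk = \Xk + (\Yk-\Xk)$, apply the triangle inequality, invoke Condition~\ref{Assumption_2} on the symmetrized cross term, and bound $\norm{(\Yk-\Xk)\tp(\Yk-\Xk)}\ff$ by $\norm{\Yk-\Xk}\fs$. The only difference is cosmetic---you justify the last inequality via singular values, whereas the paper states it directly.
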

	\begin{proof}
		
			Let $\tilde{D}:= \Yk-\Xk$, we have
			\begin{eqnarray*}
				&& \norm{\Yk\tp \Yk - I_p}\ff 
				= \norm{\Xk \tp \Xk +2\Phi(\tilde{D}\tp \Xk)+ \tilde{D}\tp \tilde{D}- I_p}\ff\\
				&\leq& \norm{\Xk\tp \Xk - I_p}\ff+ 2\norm{\Phi(\tilde{D}\tp \Xk)}\ff  
				+ \norm{\tilde{D}\tp \tilde{D}}\ff\\
				&\leq& \left(1+2c \eta_k \right)\norm{\Xk\tp \Xk - I_p}\ff   + \norm{\Yk-\Xk}\fs.
			\end{eqnarray*}
			Here the last inequality follows from Condition \ref{Assumption_2}. 
	\end{proof}
	
	Lemma \ref{Le_bound_Y} shows that the tangential step 
	may increase the feasibility violation.
	But fortunately, it can be controlled in some senses. Next, we investigate the boundedness of the iterate sequences with a suitable initial guess.

	\begin{lem}
		\label{Le_subprob_descrease}
			Suppose that Assumption \ref{Assumption_3} holds and 
			$\Xk\in\SR{\tau}$ with 
			\begin{eqnarray}\label{eq:add1}
				\tau=\frac{1}{4(1+c)^2}
			\end{eqnarray}
		    Then, we have
			\begin{equation}
				\label{eq:fun-red}
					\begin{aligned}
						&\inner{\Yk, \nabla f(\Xk)} + r(\Yk) + \frac{1}{2\eta_k} \norm{\Yk-\Xk}\fs  \\
						\leq{}&  \inner{\Xk, \nabla f(\Xk)} + r(\Xk) + \frac{6c(5L_f+5L_r + c)\eta_k}{25}\norm{\Xk\tp \Xk - I_p}\ff.
					\end{aligned}
			\end{equation}
	\end{lem}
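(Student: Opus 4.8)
The plan is to exploit the fact that, although $\Yk$ is only an \emph{inexact} solution of the constrained subproblem \eqref{Subproblem_tangential}, it is the \emph{exact} minimizer of the unconstrained, $\frac{1}{\eta_k}$-strongly convex Lagrangian
\[
L(D) := \inner{\nabla f(\Xk) - \Xk\Lambda_k, D} + r(D) + \frac{1}{2\eta_k}\norm{D-\Xk}\fs,
\]
where $\Lambda_k \in \Spp$ is the symmetric multiplier carried by Algorithm \ref{Alg:FPI} at termination, i.e. $\Yk = \proxk(\nabla f(\Xk) - \Xk\Lambda_k; \Xk)$. Since $\Phi(\Xk\tp\Xk) = \Xk\tp\Xk$, the current iterate $\Xk$ is an admissible competitor, so $L(\Yk) \leq L(\Xk)$. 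The point is that the left-hand side of \eqref{eq:fun-red} is exactly $L(\Yk) + \inner{\Xk\Lambda_k,\Yk}$, while the first two right-hand terms are $L(\Xk)+\inner{\Xk\Lambda_k,\Xk}$; adding $\inner{\Xk\Lambda_k,\Yk}$ to both sides of $L(\Yk)\le L(\Xk)$ therefore rearranges the minimizing inequality into the claimed estimate, with the surplus $\inner{\Xk\Lambda_k, \Yk - \Xk}$ standing in for the right-hand remainder. Everything then reduces to controlling this cross term.

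For the cross term I would use the symmetry of $\Lambda_k$: since $\Lambda_k \in \Spp$, one has $\inner{\Xk\Lambda_k, \Yk-\Xk} = \inner{\Lambda_k, \Phi((\Yk-\Xk)\tp\Xk)}$, so Cauchy--Schwarz together with the termination rule of Condition \ref{Assumption_2} gives
\[
\inner{\Xk\Lambda_k, \Yk-\Xk} \leq \norm{\Lambda_k}\ff\,\norm{\Phi((\Yk-\Xk)\tp\Xk)}\ff \leq c\eta_k\norm{\Lambda_k}\ff\,\norm{\Xk\tp\Xk-I_p}\ff.
\]
Matching this against the target remainder $\frac{6c(5L_f+5L_r+c)\eta_k}{25}\norm{\Xk\tp\Xk-I_p}\ff$, it remains only to establish the uniform multiplier bound $\norm{\Lambda_k}\ff \leq \frac{6(5L_f+5L_r+c)}{25}$.

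This multiplier bound is the heart of the proof. From the optimality condition of the proximal map, $\Xk\Lambda_k = \nabla f(\Xk) + W_k + \frac{1}{\eta_k}(\Yk-\Xk)$ for some $W_k \in \partial r(\Yk)$; left-multiplying by $\Xk\tp$, symmetrizing, and substituting $(\Yk-\Xk)\tp\Xk = \eta_k\big(\Lambda_k\Xk\tp\Xk - (\nabla f(\Xk)+W_k)\tp\Xk\big)$ converts this into $\Phi(\Lambda_k\Xk\tp\Xk) = \Phi((\nabla f(\Xk)+W_k)\tp\Xk) + \frac{1}{\eta_k}\Phi((\Yk-\Xk)\tp\Xk)$. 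The map $\Lambda \mapsto \Phi(\Lambda\Xk\tp\Xk)$ is a symmetric Lyapunov operator whose eigenvalues lie in $[1-\tau, 1+\tau]$ because $\Xk \in \SR{\tau}$; inverting it and invoking Condition \ref{Assumption_2} once more yields $\norm{\Lambda_k}\ff \leq \frac{1}{1-\tau}\big(\norm{\Phi((\nabla f(\Xk)+W_k)\tp\Xk)}\ff + c\norm{\Xk\tp\Xk - I_p}\ff\big)$. Here I use $\norm{\nabla f(\Xk)}\ff \leq L_f$ (valid since $\SR{\tau}\subseteq\SR{1/2}$) and $\norm{W_k}\ff \leq L_r$, the latter requiring $\Yk\in\SR{1/2}$ so that the subgradient of the locally $L_r$-Lipschitz map $r$ is controlled. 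Inserting $\tau = \frac{1}{4(1+c)^2}$ and simplifying is what produces the stated constant $\frac{6}{25}$, possibly after a sharper estimate of $\norm{\Phi((\nabla f(\Xk)+W_k)\tp\Xk)}\ff$ than the crude product $\norm{\nabla f(\Xk)+W_k}\ff\,\norm{\Xk}\tf$.

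The main obstacle is this last step. One must verify $\Yk\in\SR{1/2}$ so that $\norm{W_k}\ff\le L_r$, yet the natural containment argument (Lemma \ref{Le_bound_Y} combined with $\norm{\Yk-\Xk}\ff = \eta_k\norm{\nabla f(\Xk)+W_k-\Xk\Lambda_k}\ff = \ca{O}(\eta_k)$ under Assumption \ref{Assumption_3}) itself depends on a bound for $\norm{\Lambda_k}\ff$; hence a short bootstrap is needed, starting from the global Lipschitz constant of $r$ and refining to the local $L_r$. The numerical constants must then be tracked carefully through $\tau=\frac{1}{4(1+c)^2}$ and the stepsize ceiling of Assumption \ref{Assumption_3} in order to land exactly on the prefactor $\frac{6}{25}$. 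I note that the strong-convexity margin $\frac{1}{2\eta_k}\norm{\Yk-\Xk}\fs$ discarded in the very first comparison is not needed for this lemma, but it is precisely the quantity the subsequent lemmas will recover to turn this subproblem decrease into genuine descent of the merit function.
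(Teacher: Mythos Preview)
Your argument is sound in outline but takes a genuinely different route from the paper, and the route you chose is where the technical friction you identify comes from. The paper never bounds $\norm{\Lambda_k}\ff$. Instead it exploits the same observation you start from---that $\Yk$, being the exact prox with multiplier $\Lambda_k$, is the \emph{exact constrained} minimizer of $p_k(D):=\inner{\nabla f(\Xk),D}+r(D)+\tfrac{1}{2\eta_k}\norm{D-\Xk}\fs$ over the \emph{shifted} affine subspace $\{D:\Phi(\Xk\tp(D-\Xk))=\Phi(\Xk\tp(\Yk-\Xk))\}$---and then produces a concrete competitor on that subspace: $\Zk:=\Xk+\Xk(\Xk\tp\Xk)^{-1}\Phi((\Yk-\Xk)\tp\Xk)$. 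By Condition~\ref{Assumption_2}, $\norm{\Zk-\Xk}\ff\le\tfrac{6}{5}c\eta_k\norm{\Xk\tp\Xk-I_p}\ff$, so $p_k(\Yk)\le p_k(\Zk)$ and $|p_k(\Zk)-p_k(\Xk)|$ is controlled purely by the Lipschitz constants $L_f,L_r$ at the points $\Xk,\Zk$, both of which are easily seen to lie in $\SR{1/2}$. This yields the stated constant $\tfrac{6c(5L_f+5L_r+c)}{25}$ directly.

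What this buys over your approach is precisely the elimination of the two obstacles you flag. First, the circularity disappears: the paper never needs $\Yk\in\SR{1/2}$ or any subgradient bound at $\Yk$; it only evaluates $r$ at $\Xk$ and $\Zk$. Second, the constants fall out cleanly. Your Lyapunov-inversion bound gives roughly $\norm{\Lambda_k}\ff\lesssim\tfrac{1}{1-\tau}\big(\norm{\Xk}_2(L_f+L_r)+c\tau\big)$, which with $\tau\le 1/4$ and $\norm{\Xk}_2\le\sqrt{5}/2$ produces a coefficient on $(L_f+L_r)$ of about $\tfrac{2\sqrt{5}}{3}\approx 1.49$, strictly larger than the target $6/5$; so even after the bootstrap you would land on a correct inequality with a worse constant, not \eqref{eq:fun-red} as written. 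Your Lagrangian comparison $L(\Yk)\le L(\Xk)$ is the right instinct---indeed the paper's ``$\Yk$ minimizes $p_k$ on the shifted subspace'' is exactly its dual reformulation---but passing to the comparison point $\Zk$ rather than to a multiplier norm is the cleaner way to cash it in.
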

	\begin{proof}
		For convenience, we denote $p_k(D) := \inner{D, \nabla f(\Xk)} + r(D) + \frac{1}{2\eta_k} \norm{D-\Xk}\fs $. 
		It directly follows from the fact $\Xk\in\SR{\tau}$ 
		that $\Xk\tp \Xk$ is non-singular and hence we can define $\Zk := \Xk + \Xk(\Xk\tp \Xk)^{-1}\Phi((\Yk-\Xk)\tp \Xk)$. By the definition of $\Zk$, we first obtain
		\begin{equation*}
			\Phi((\Zk - \Xk)\tp \Xk) = \Phi((\Yk - \Xk)\tp \Xk).
		\end{equation*}
		Besides, together with Condition \ref{Assumption_2}, the distance between $\Zk$ and $\Xk$ can be estimated by
		\begin{equation*}
			\norm{\Zk - \Xk}\ff \leq \norm{\Xk(\Xk\tp \Xk)^{-1}}_2 \norm{\Phi((\Yk-\Xk)\tp \Xk)}\ff \leq \frac{6c\eta_k}{5}\norm{\Xk\tp \Xk - I_p}\ff,
		\end{equation*}
		where the last inequality results from the fact that $\norm{\Xk(\Xk\tp \Xk)^{-1}}_2\leq \frac{2\sqrt{3}}{3}<6/5$ which is implied by the inclusion 
		$\Xk\in\SR{\tau}$.
	    %

		Then we have		
		\begin{equation}
			\label{Eq_Le_subprob_descrease}
			\begin{aligned}
				&|p_k(\Zk) - p_k(\Xk)| \leq \left| \inner{\Zk-\Xk, \nabla f(\Xk)} \right| + \left| r(\Zk) - r(\Xk) \right| + \frac{1}{2\eta_k} \norm{\Zk -  \Xk}\fs\\
				\leq{}& (L_f + L_r) \norm{\Zk - \Xk}\ff + \frac{1}{2\eta_k} \norm{\Zk -  \Xk}\fs\\
				\leq{}&  \frac{6c(L_f+L_r)\eta_k}{5}\norm{\Xk\tp \Xk - I_p}\ff + \frac{18c^2\eta_k}{25}\norm{\Xk\tp \Xk - I_p}\fs. 
			\end{aligned}
		\end{equation}
	
		On the other hand, we consider the following optimization problem,
		\begin{equation}
			\label{Eq_Le_subprob_allu_model}
			\begin{aligned}
				\min_{D \in \bb{R}^{n\times p} }~ &\inner{D, \nabla f(\Xk)} + r(D) + \frac{1}{2\eta_k} \norm{D-\Xk}\fs \\
				\text{s.t.}~&  \Phi(\Xk\tp (D-\Xk)) =  \Phi(\Xk\tp (\Yk-\Xk)).
			\end{aligned}
		\end{equation}
	As described in Algorithm \ref{Alg:FPI}, there exists a symmetric matrix  $\Lambda_k$ 
	such that 
		$\Yk = \prox_{\eta_k}(\nabla f(\Xk) - \Xk\Lambda_k; \Xk)$.
	Then by \citet{rockafellar2009variational}, we have
	\begin{equation}
		0 \in \nabla f(\Xk) - \Xk \Lambda_k + \frac{1}{\eta_k}(\Yk - \Xk) + \partial r(\Yk) = \partial p_k(\Yk) - \Xk \Lambda_k.
	\end{equation}
	Therefore, there exists $W_k \in \partial p_k(\Yk)$ such that $W - \Xk \Lambda_k = 0$. Therefore, for any feasible $D$ in \eqref{Eq_Le_subprob_allu_model}, we have
	\begin{equation}
		\begin{aligned}
			&\inner{W, D - \Yk} = \inner{\Xk\Lambda_k, D - \Yk} = \inner{\Lambda_k, \Xk\tp (D - \Yk)} = \inner{\Lambda_k, \Phi(\Xk\tp (D - \Yk))}\\
			={}& \inner{\Lambda_k, \Phi(\Xk\tp (D - \Xk))} - \inner{\Lambda_k, \Phi(\Xk\tp (\Yk - \Xk))} = 0- 0 = 0. 
		\end{aligned}
	\end{equation}
	Then together with  \cite[Theorem 1.1.1]{hiriart2013convex}, 
	we can conclude that $\Yk$ is the global minimizer of 
	\eqref{Eq_Le_subprob_allu_model}, and hence $p_k(\Yk) \leq p_k(Z_k)$. 
		Recall the inequality \eqref{Eq_Le_subprob_descrease}, we arrive at 
		\begin{eqnarray*}
			&&	p_k(\Yk) - p_k(\Xk) \leq p_k(\Zk) - p_k(\Xk) \leq |p_k(\Zk) - p_k(\Xk)|\\
			&\leq& \frac{6\eta_kc(L_f+L_r)}{5}\norm{\Xk\tp \Xk - I_p}\ff + \frac{18c^2\eta_k}{25}\norm{\Xk\tp \Xk - I_p}\fs \\
			&\leq& \frac{6c(5L_f+5L_r + c)\eta_k}{25}\norm{\Xk\tp \Xk - I_p}\ff. 
		\end{eqnarray*}
		Here the last inequality follows the the inclusion 
		$\Xk\in\SR{\tau}\subset\SR{1/3}$. Then we complete the proof. 
	\end{proof}

	\begin{lem}
		\label{Le:upper_bound_dist}
			Suppose that Assumption \ref{Assumption_3} holds and the iterate sequences $\{\Xk\}$ and $\{\Yk\}$ are generated by Algorithm \ref{Alg_MOrthPen}
			initiated from $X_0$ satisfying $X_0\in\SR{\tau}$ with $\tau$
			defined in \eqref{eq:add1}. 
			Then for any $k=0,1, ...$, it holds
			\begin{eqnarray}\label{eq:feas}
				&\max\left\{ \norm{\Yk - \Xk}\ff,\,
				\norm{\Yk\tp \Yk - I_p}\ff\right\} \leq \frac{1}{2(1+c)},
				\quad 
				\norm{\Xk\tp\Xk - I_p}\ff \leq \frac{1}{4(1+c)^2},&\\ 
				\label{eq:feasred}
				&\norm{\Xkp\tp \Xkp - I_p}\ff  \leq   \frac{13}{32}\norm{\Xk\tp \Xk - I_p}\ff + \frac{13}{32} \norm{\Yk-\Xk}\ff^2.&
			\end{eqnarray}
	\end{lem}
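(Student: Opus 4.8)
The plan is to argue by induction on $k$, maintaining the invariant $\Xk\in\SR{\tau}$ with $\tau=\frac{1}{4(1+c)^2}$ as in \eqref{eq:add1}. The base case $k=0$ is exactly the hypothesis $X_0\in\SR{\tau}$. For the inductive step, I assume $\norm{\Xk\tp\Xk-I_p}\ff\leq\tau$ and establish, in order: (i) the tangential bound $\norm{\Yk-\Xk}\ff\leq\frac{1}{2(1+c)}$; (ii) the intermediate feasibility bound $\norm{\Yk\tp\Yk-I_p}\ff\leq\frac{1}{2(1+c)}$; (iii) the contraction estimate \eqref{eq:feasred}; and finally (iv) $\norm{\Xkp\tp\Xkp-I_p}\ff\leq\tau$, which restores the invariant and closes the induction. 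The three previously proved lemmas feed in cleanly: Lemma \ref{Le_subprob_descrease} drives (i), Lemma \ref{Le_bound_Y} drives (ii), and Lemma \ref{Le_bound_X} applied to the normal step \eqref{Subproblem_normal} drives (iii) and (iv).

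For step (i), since $\Xk\in\SR{\tau}\subset\SR{1/2}$, inequality \eqref{eq:fun-red} applies. Subtracting $\inner{\Xk,\nabla f(\Xk)}+r(\Xk)$ from both sides and bounding $\inner{\Xk-\Yk,\nabla f(\Xk)}\leq L_f\norm{\Yk-\Xk}\ff$ and $r(\Xk)-r(\Yk)\leq L_r\norm{\Yk-\Xk}\ff$ via Lipschitz continuity, I obtain a scalar quadratic inequality in $a:=\norm{\Yk-\Xk}\ff$ of the form $\frac{1}{2\eta_k}a^2\leq(L_f+L_r)a+\frac{6c(5L_f+5L_r+c)\eta_k}{25}\norm{\Xk\tp\Xk-I_p}\ff$. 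Solving the quadratic and using $\sqrt{x+y}\leq\sqrt{x}+\sqrt{y}$ gives $a\leq 2\eta_k(L_f+L_r)+\eta_k\sqrt{\frac{12c(5L_f+5L_r+c)}{25}\norm{\Xk\tp\Xk-I_p}\ff}$. Substituting the invariant $\norm{\Xk\tp\Xk-I_p}\ff\leq\frac{1}{4(1+c)^2}$ and $\eta_k\leq\tilde{\eta}\leq\frac{1}{6(1+c)(L_f+L_r+c+1)}$ from Assumption \ref{Assumption_3}, the first term is controlled by $\frac{1}{3(1+c)}$ and the second by $\frac{1}{6(1+c)}$ (the latter reducing to $3c(5L_f+5L_r+c)\leq25(1+c)^2(L_f+L_r+c+1)^2$, which holds since $c\leq1$ and $L_f+L_r+c+1\geq1$), so $a\leq\frac{1}{2(1+c)}$. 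For step (ii), Lemma \ref{Le_bound_Y} yields $\norm{\Yk\tp\Yk-I_p}\ff\leq(1+2c\eta_k)\norm{\Xk\tp\Xk-I_p}\ff+a^2$; bounding $a^2\leq\frac{1}{4(1+c)^2}$, $\norm{\Xk\tp\Xk-I_p}\ff\leq\frac{1}{4(1+c)^2}$, and $2c\eta_k\leq\frac{c}{3}$ reduces the claim to $2+\frac{c}{3}\leq2(1+c)$, i.e.\ $\frac{c}{3}\leq2c$, which is immediate. This completes \eqref{eq:feas}.

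For step (iii), the bound in (ii) supplies the hypothesis $\norm{\Yk\tp\Yk-I_p}\tf\leq\frac14$ of Lemma \ref{Le_bound_X}, and applying that lemma to $\Xkp=\Yk\Apen{\Yk}$ gives $\norm{\Xkp\tp\Xkp-I_p}\ff\leq\frac{13}{16}\norm{\Yk\tp\Yk-I_p}\fs$. I would then write $\norm{\Yk\tp\Yk-I_p}\fs=\norm{\Yk\tp\Yk-I_p}\ff\cdot\norm{\Yk\tp\Yk-I_p}\ff$, estimate one factor by $\frac{1}{2(1+c)}$ from (ii), and expand the other by Lemma \ref{Le_bound_Y}; using $\frac{1+2c\eta_k}{1+c}\leq1$ and $\frac{1}{1+c}\leq1$ collapses the constants to exactly \eqref{eq:feasred}. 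Finally, for step (iv), substituting $\norm{\Xk\tp\Xk-I_p}\ff\leq\frac{1}{4(1+c)^2}$ and $\norm{\Yk-\Xk}\fs\leq\frac{1}{4(1+c)^2}$ into \eqref{eq:feasred} gives $\norm{\Xkp\tp\Xkp-I_p}\ff\leq\frac{13}{32}\cdot\frac{2}{4(1+c)^2}=\frac{13}{64(1+c)^2}\leq\frac{1}{4(1+c)^2}=\tau$, restoring the invariant.

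The main obstacle is step (i): one must solve the quadratic and then verify that the engineered threshold $\tilde{\eta}$ in Assumption \ref{Assumption_3} is exactly small enough to force $a\leq\frac{1}{2(1+c)}$, which is where the delicate dependence on $L_f,L_r,c$ enters and the scalar estimates must be done carefully. A secondary point is checking the spectral-norm precondition $\norm{\Yk\tp\Yk-I_p}\tf\leq\frac14$ needed to invoke Lemma \ref{Le_bound_X} with the sharp factor $\frac{13}{16}$ in step (iii); this must be extracted from the feasibility control on $\Yk$ (noting $\norm{\cdot}\tf\leq\norm{\cdot}\ff$), and it is precisely this constant that justifies the stated $\frac{13}{32}$ in \eqref{eq:feasred}. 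Everything else is bookkeeping that propagates the invariant $\Xk\in\SR{\tau}$ forward.
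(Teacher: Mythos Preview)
Your induction scheme and the chain of steps (i)--(iv) match the paper's proof, and your handling of steps (ii), (iii), and (iv) is correct (your factorization in (iii) differs cosmetically from the paper's use of $(a+b)^2\leq 2a^2+2b^2$, but both yield the constant $\tfrac{13}{32}$). The only substantive difference is in step (i): the paper argues by contradiction, assuming $\norm{\Yk-\Xk}\ff > 3\eta_k(L_f+L_r+c+1)$ and splitting $\tfrac{1}{2\eta_k}\norm{\Yk-\Xk}\fs=\tfrac{1}{3\eta_k}\norm{\Yk-\Xk}\fs+\tfrac{1}{6\eta_k}\norm{\Yk-\Xk}\fs$ to violate the quadratic inequality; this gives $\norm{\Yk-\Xk}\ff\leq 3\eta_k(L_f+L_r+c+1)\leq\tfrac{1}{2(1+c)}$ in one stroke, without solving any quadratic.

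Your direct quadratic-solution route is also valid, but your justification contains an error: you assert that $3c(5L_f+5L_r+c)\leq 25(1+c)^2(L_f+L_r+c+1)^2$ holds ``since $c\leq 1$,'' yet nothing in the paper bounds $c$ above (Condition~\ref{Assumption_2} only requires $c>0$, and the numerical section takes $c=1000$). The inequality is nonetheless true for all $c>0$: writing $L:=L_f+L_r$, note $5L+c\leq 5(L+c+1)$ and $c<(1+c)^2$, so $3c(5L+c)\leq 15(1+c)^2(L+c+1)\leq 15(1+c)^2(L+c+1)^2<25(1+c)^2(L+c+1)^2$. With this fix your argument goes through; the paper's contradiction approach simply sidesteps this scalar verification.
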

	\begin{proof}
	We use mathematical induction. 
	Clearly $\norm{\Xk\tp\Xk - I_p}\ff \leq \frac{1}{4(1+c)^2}$ holds for 
	$k=0$. 
	From Lemma \ref{Le_subprob_descrease}, we have
	\begin{equation}\label{eq:add2}
			\frac{1}{2\eta_k} \norm{\Yk-\Xk}\fs 
			\leq (L_f + L_r) \norm{\Yk - \Xk}\ff + \frac{6\eta_kc(5L_f+5L_r + c)}{25}\norm{\Xk\tp \Xk - I_p}\ff. 
	\end{equation}
	Suppose that $\norm{\Yk - \Xk}\ff > 3\eta_k (L_f + L_r + c + 1)$, we have
	\begin{eqnarray*}
			&&\frac{1}{2\eta_k}
			 \norm{\Yk-\Xk}\fs =\frac{1}{3\eta_k}
			 \norm{\Yk-\Xk}\fs  + \frac{1}{6\eta_k}
			 \norm{\Yk-\Xk}\fs \\
			&>& (L_f + L_r) \norm{\Yk - \Xk}\ff + \frac{3}{2} (L_f + L_r+ c + 1)^2\eta_k \\
			&>& (L_f + L_r) \norm{\Yk - \Xk}\ff + \frac{6c(5L_f+5L_r + c)\eta_k}{25}\norm{\Xk\tp \Xk - I_p}\ff,
	\end{eqnarray*}
	where the last inequality results from 
	the inclusion $\Xk\in\SR{\tau}$. Clearly, this statement contradicts the
	inequality \eqref{eq:add2}.
	Therefore, we have
	\begin{equation*}
		\norm{\Yk - \Xk}\ff \leq 3\eta_k (L_f + L_r+ c + 1) \leq \frac{1}{2(1+c)},
	\end{equation*}
	where the last inequality follows from Assumption \ref{Assumption_3}.

	On the other hand, by recalling Lemmas \ref{Le_bound_X}, \ref{Le_bound_Y} and the Cauchy-Schwartz inequality, 
	we obtain
	\begin{eqnarray*}
		&&\norm{\Xkp\tp\Xkp - I_p}\ff \leq \frac{13}{16}\norm{\Yk\tp \Yk - I_p}\fs
		\leq \frac{13}{16} \left(\left(1+2\eta_kc \right)\norm{\Xk\tp\Xk - I_p}\ff + \norm{\Yk - \Xk}\fs\right)^2\\
		&\leq&
		\frac{13}{8}
		\left(\left(1+2\eta_kc \right)^2\norm{\Xk\tp\Xk - I_p}\fs + \norm{\Yk - \Xk}^4\ff\right)
		\leq \frac{13}{32}\norm{\Xk\tp \Xk - I_p}\ff + \frac{13}{32} \norm{\Yk-\Xk}\ff^2.
	\end{eqnarray*}
		
	Finally, by Lemma \ref{Le_bound_X} we have
		\begin{equation}\label{eq:XXI-1}
			\norm{\Xkp\tp\Xkp - I_p}\ff \leq \frac{13}{16}\norm{\Yk\tp \Yk - I_p}\fs 
			< \frac{1}{4(1+c)^2}. 
		\end{equation}
	Thus, we can conclude the proof by using the mathematical induction.

	\end{proof}

	\subsection{Global convergence}
	Before presenting the main convergence theorem of \SLPG, we
		first estimate certain sufficient function value  reduction.
	\begin{lem}
		\label{Le_Decrease_tangential}
		Suppose that Assumption \ref{Assumption_3} holds and the iterate sequences $\{\Xk\}$ and $\{\Yk\}$ are generated by Algorithm \ref{Alg_MOrthPen}
			initiated from $X_0$ satisfying $X_0\in\SR{\tau}$ with $\tau$
			defined in \eqref{eq:add1}. 
			Then for any $k=0,1, ...$, it holds
		\begin{equation}\label{eq:funred}
			\begin{aligned}
				f(\Xkp) + r(\Xkp) \leq{}& f(\Xk) + r(\Xk) + \left( -\frac{1}{2\eta_k} + \frac{L_{f'} }{2} + L_f+L_r \right)  \norm{\Yk-\Xk}\fs \\
				&+ \left(L_f+L_r+\frac{3c}{4+4c}\right)\norm{\Xk\tp\Xk - I_p}\ff.  
			\end{aligned}
		\end{equation}
	\end{lem}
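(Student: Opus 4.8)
The plan is to split the transition $\Xk\to\Xkp$ into the tangential move $\Xk\to\Yk$ and the normal move $\Yk\to\Xkp$, estimate the change of $f+r$ across each, and then add the two estimates. The appearance of $L_f+L_r$ in \emph{both} coefficients of the claimed bound signals that Lipschitz continuity of $f+r$ across the normal step is what produces the extra penalty-type term on $\norm{\Xk\tp\Xk-I_p}\ff$, while the $\frac{L_{f'}}{2}$ and $-\frac{1}{2\eta_k}$ originate from the tangential step.

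For the tangential move I would first invoke the descent lemma for $f$: since $\nabla f$ is $L_{f'}$-Lipschitz on $\SR{1/2}$ and, by Lemma \ref{Le:upper_bound_dist}, both $\Xk$ and $\Yk$ lie in $\SR{1/2}$, one has $f(\Yk)\le f(\Xk)+\inner{\nabla f(\Xk),\Yk-\Xk}+\frac{L_{f'}}{2}\norm{\Yk-\Xk}\fs$. The only subtlety is that the descent lemma needs the whole segment $[\Xk,\Yk]$ inside $\SR{1/2}$; this follows because the cross term $\Phi((\Yk-\Xk)\tp\Xk)$ is controlled by Condition \ref{Assumption_2}, so the same estimate as in Lemma \ref{Le_bound_Y} bounds $\norm{X_t\tp X_t-I_p}\ff$ uniformly along the segment. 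Adding this to the tangential descent estimate of Lemma \ref{Le_subprob_descrease} cancels the linear term $\inner{\nabla f(\Xk),\Yk}$ and yields
\[
f(\Yk)+r(\Yk)\le f(\Xk)+r(\Xk)+\left(\frac{L_{f'}}{2}-\frac{1}{2\eta_k}\right)\norm{\Yk-\Xk}\fs+\frac{6c(5L_f+5L_r+c)\eta_k}{25}\norm{\Xk\tp\Xk-I_p}\ff.
\]

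For the normal move I would use Lipschitz continuity of $f+r$ on $\SR{1/2}$ (both $\Yk$ and $\Xkp$ lie in $\SR{1/2}$), giving $f(\Xkp)+r(\Xkp)\le f(\Yk)+r(\Yk)+(L_f+L_r)\norm{\Xkp-\Yk}\ff$. The key computation is the identity $\Xkp-\Yk=-\frac12\Yk(\Yk\tp\Yk-I_p)$ coming from the definition \eqref{Subproblem_normal}, so that $\norm{\Xkp-\Yk}\ff\le\frac12\norm{\Yk}_2\norm{\Yk\tp\Yk-I_p}\ff\le\norm{\Yk\tp\Yk-I_p}\ff$, where $\frac12\norm{\Yk}_2\le1$ follows from $\norm{\Yk\tp\Yk-I_p}\ff\le\frac12$ in Lemma \ref{Le:upper_bound_dist}. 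Substituting Lemma \ref{Le_bound_Y} then bounds $\norm{\Yk\tp\Yk-I_p}\ff$ by $(1+2c\eta_k)\norm{\Xk\tp\Xk-I_p}\ff+\norm{\Yk-\Xk}\fs$.

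Combining the two estimates, the coefficient of $\norm{\Yk-\Xk}\fs$ becomes exactly $-\frac{1}{2\eta_k}+\frac{L_{f'}}{2}+L_f+L_r$, as claimed. The remaining work, which I expect to be the main obstacle, is the bookkeeping on the $\norm{\Xk\tp\Xk-I_p}\ff$ coefficient: it equals $(L_f+L_r)(1+2c\eta_k)+\frac{6c(5L_f+5L_r+c)\eta_k}{25}=(L_f+L_r)+c\eta_k\frac{80(L_f+L_r)+6c}{25}$, and I must show the second summand is at most $\frac{3c}{4+4c}$. This is where Assumption \ref{Assumption_3} enters: plugging in $\eta_k\le\frac{1}{6(1+c)(L_f+L_r+c+1)}$ reduces the required inequality to $320(L_f+L_r)+24c\le450(L_f+L_r+c+1)$, which holds trivially; the case $c=0$ is immediate since both extra terms vanish.
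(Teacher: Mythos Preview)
Your proposal is correct and follows essentially the same route as the paper: split into the tangential move (descent lemma for $f$ combined with Lemma~\ref{Le_subprob_descrease}) and the normal move (Lipschitz continuity of $f+r$, the identity $\Xkp-\Yk=-\tfrac12\Yk(\Yk\tp\Yk-I_p)$, and Lemma~\ref{Le_bound_Y}), then invoke Assumption~\ref{Assumption_3} to absorb the $\eta_k$-dependent constants into $\tfrac{3c}{4+4c}$. The only cosmetic difference is that the paper simplifies the two feasibility coefficients separately---bounding the tangential contribution by $\tfrac{c}{4+4c}$ and the normal contribution $2c\eta_k(L_f+L_r)$ by $\tfrac{c}{2+2c}$---whereas you first add them and then bound the sum; both arithmetic routes land on the same inequality $320(L_f+L_r)+24c\le450(L_f+L_r+c+1)$.
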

	\begin{proof}
		Recalling the inequality \eqref{eq:fun-red}
			and the Taylor expansion of the objective function 
			of \eqref{Prob_Ori}, we can obtain
		\begin{equation}
			\label{Eq_Le_Decrease_tangential_2}
			\begin{aligned}
				& \left(f(\Yk) + r(\Yk)\right) -  \left(f(\Xk) + r(\Xk) \right)\\
				{\,\leq\,}& \inner{\Yk-\Xk, \nabla f(\Xk)} 
				+ \frac{L_{f'} }{2}\norm{\Yk-\Xk}\fs 
				+ r(\Yk) - r(\Xk) \\
				&+ \frac{6\eta_kc(5L_f+5L_r + 3c)}{25}\norm{\Xk\tp \Xk - I_p}\ff.\\
				{\,\leq\,}& \left( -\frac{1}{2\eta_k} + \frac{L_{f'} }{2} \right)\norm{\Yk-\Xk}\fs  + \frac{c}{4+4c} \norm{\Xk\tp \Xk - I_p}\ff. 
			\end{aligned}
		\end{equation}
		Here the last inequality follows the upper-bound for $\eta_k$ described in Assumption \ref{Assumption_3}.

		On the other hand, the assertion
			\eqref{eq:feas} of Lemma \ref{Le:upper_bound_dist} directly
			implies that $\norm{\Yk}_2 \leq 2$, which together with
			the normal step \eqref{Subproblem_normal} lead to the fact that
			\begin{equation*}
				\norm{\Xkp - \Yk}\ff = \norm{\Yk\Apen{\Yk} - \Yk}\ff = \frac{1}{2}\norm{\Yk(\Yk\tp\Yk - I_p)}\ff 
				\leq  \norm{\Yk\tp \Yk - I_p}\ff.
			\end{equation*} 
		
		Then by the Lipschitz continuity of $f$ and $r$,
		Lemma \ref{Le_bound_Y} and Assumption \ref{Assumption_3}, 
		we arrive at
		\begin{equation}
			\label{Eq_Le_Decrease_tangential_3}
			\begin{aligned}
				&\left(f(\Xkp) + r(\Xkp)\right) - \left( f(\Yk) + r(\Yk) \right) \\
				\leq & (L_f+L_r)\norm{\Xkp-\Yk}\ff \leq  (L_f+L_r)\norm{\Yk\tp \Yk - I_p}\ff \\
				\leq & \left(1+2\eta_kc\right)(L_f+L_r)\norm{\Xk\tp\Xk - I_p}\ff + (L_f+L_r) \norm{\Yk-\Xk}\fs \\
				\leq & 
				\left(L_f+L_r+\frac{c}{2+2c}\right)\norm{\Xk\tp\Xk - I_p}\ff + (L_f+L_r) \norm{\Yk-\Xk}\fs.
			\end{aligned}
		\end{equation}
		
		After summing up the inequalities \eqref{Eq_Le_Decrease_tangential_2} and \eqref{Eq_Le_Decrease_tangential_3} together, we complete the proof. 
	\end{proof}
	
	In the next step, we need to evaluate the sufficient reduction
		of the following merit function.
	\begin{equation}
		\label{Penalty_function}
		h(X) := f(X) + r(X) + \left(2L_f+2L_r+\frac{3}{2}\right) \norm{X\tp X - I_p}\ff. 
	\end{equation}
	
	\begin{lem}
		\label{Le_Decrease_all}
		Suppose that Assumption \ref{Assumption_3} holds and the iterate sequences $\{\Xk\}$ and $\{\Yk\}$ are generated by Algorithm \ref{Alg_MOrthPen}
			initiated from $X_0$ satisfying $X_0\in\SR{\tau}$ with $\tau$
			defined in \eqref{eq:add1}. 
			Then for any $k=0,1, ...$, it holds
		\begin{equation}\label{eq:hred}
			h(\Xkp) - h(\Xk) \leq - \frac{1}{4\eta_k} \norm{\Yk-\Xk}\fs - \frac{3}{16}\left(L_f+L_r+1\right)\norm{\Xk\tp\Xk - I_p}\ff. 
		\end{equation}
	\end{lem}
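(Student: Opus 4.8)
The plan is to expand the merit-function difference $h(\Xkp)-h(\Xk)$ into two pieces, namely the change of the composite objective $f+r$ and the change of the penalty term $\left(2L_f+2L_r+\frac32\right)\norm{\cdot\,\tp\cdot - I_p}\ff$, and to bound each piece by one of the two lemmas already proved. Concretely, I would first write
$h(\Xkp)-h(\Xk) = \left[(f+r)(\Xkp)-(f+r)(\Xk)\right] + \left(2L_f+2L_r+\tfrac32\right)\left(\norm{\Xkp\tp\Xkp-I_p}\ff-\norm{\Xk\tp\Xk-I_p}\ff\right).$
For the first bracket I would invoke inequality \eqref{eq:funred} of Lemma \ref{Le_Decrease_tangential} verbatim. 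For the penalty change I would use \eqref{eq:feasred} of Lemma \ref{Le:upper_bound_dist}, which after subtracting $\norm{\Xk\tp\Xk-I_p}\ff$ gives $\norm{\Xkp\tp\Xkp-I_p}\ff-\norm{\Xk\tp\Xk-I_p}\ff \le -\frac{19}{32}\norm{\Xk\tp\Xk-I_p}\ff + \frac{13}{32}\norm{\Yk-\Xk}\fs$.

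Adding the two estimates and collecting terms, the whole bound becomes a combination of $\norm{\Yk-\Xk}\fs$ and $\norm{\Xk\tp\Xk-I_p}\ff$. The coefficient of $\norm{\Yk-\Xk}\fs$ is $-\frac{1}{2\eta_k}+\frac{L_{f'}}{2}+L_f+L_r+\frac{13}{32}\left(2L_f+2L_r+\frac32\right)$, and the coefficient of $\norm{\Xk\tp\Xk-I_p}\ff$ is $L_f+L_r+\frac{3c}{4+4c}-\frac{19}{32}\left(2L_f+2L_r+\frac32\right)$. The whole argument then reduces to two scalar inequalities showing that these two coefficients are dominated by $-\frac{1}{4\eta_k}$ and $-\frac{3}{16}(L_f+L_r+1)$, respectively.

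For the first coefficient I would use Assumption \ref{Assumption_3}, namely $\eta_k\le\tilde\eta\le\frac{1}{2L_{f'}+8(L_f+L_r)+3}$, so that $\frac{1}{4\eta_k}\ge\frac{L_{f'}}{2}+2(L_f+L_r)+\frac34$; the required bound $\le-\frac{1}{4\eta_k}$ then follows once I verify $\frac{13}{32}\left(2L_f+2L_r+\frac32\right)=\frac{13}{16}(L_f+L_r)+\frac{39}{64}\le (L_f+L_r)+\frac34$, which holds since $\frac{13}{16}<1$ and $\frac{39}{64}\le\frac{48}{64}$. For the second coefficient, substituting $\frac{19}{32}\left(2L_f+2L_r+\frac32\right)=\frac{19}{16}(L_f+L_r)+\frac{57}{64}$ and using $1-\frac{19}{16}=-\frac{3}{16}$ reduces the claim to $\frac{3c}{4+4c}\le\frac{45}{64}$, which holds comfortably for the admissible inexactness constant $c$ (indeed for $c\le1$ one has $\frac{3c}{4+4c}\le\frac38<\frac{45}{64}$). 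Combining the two verified coefficient bounds yields \eqref{eq:hred}. The main obstacle here is not conceptual but bookkeeping: one must propagate the fractions $\tfrac{13}{32}$ and $\tfrac{19}{32}$ from \eqref{eq:feasred} together with the specific penalty weight $2L_f+2L_r+\tfrac32$ and the exact form of $\tilde\eta$, so that the $L_{f'}$, $(L_f+L_r)$ and the pure-constant parts of each coefficient all balance in the right direction simultaneously; the penalty weight and $\tilde\eta$ have evidently been calibrated precisely so that these numeric inequalities close.
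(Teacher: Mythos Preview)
Your approach is exactly the one the paper intends: the paper's proof literally reads ``This is a direct corollary of inequalities \eqref{eq:feasred}, \eqref{eq:funred} and \eqref{Penalty_function},'' and you have correctly filled in the coefficient bookkeeping that this sentence suppresses. One small caveat in your final step: the inequality $\frac{3c}{4+4c}\le\frac{45}{64}$ that you need is equivalent to $c\le 15$, not merely $c\le 1$; since $\frac{3c}{4+4c}\to\frac34>\frac{45}{64}$ as $c\to\infty$, the bound does \emph{not} hold for arbitrary $c$, so your phrase ``holds comfortably for the admissible inexactness constant $c$'' tacitly imposes a ceiling on $c$ that the paper never states explicitly (and indeed violates in its numerical section) --- but this is a gap in the paper's own argument, not in your reconstruction of it.
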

	\begin{proof}
			This is a direct corollary of inequalities \eqref{eq:feasred},
			\eqref{eq:funred} and \eqref{Penalty_function}.
	\end{proof}

	\begin{theo}
		\label{The_convergence}
		Suppose that Assumption \ref{Assumption_3} holds and the iterate sequences $\{\Xk\}$ and $\{\Yk\}$ are generated by Algorithm \ref{Alg_MOrthPen}
			initiated from $X_0$ satisfying 
			satisfying $X_0\in\SR{\tau}$ with $\tau$
			defined in \eqref{eq:add1}. 
			Then the sequence
			$\{\Xk\}$ exists at least one accumulation point
			which must be a first-order stationary point of \eqref{Prob_Ori}. 
		
		Moreover, 
		\begin{equation}\label{l1}
			\min_{0\leq i\leq k} \frac{1}{\eta_i}\norm{{Y_{i}} - X_i}\ff \leq  \sqrt{\frac{26L_f+26L_r+6}{(k+1)\tilde{\eta}}}. 
		\end{equation}
		and 
		\begin{equation}\label{l2}
			\min_{0\leq i\leq k} \norm{{X_i}\tp X_i - I_p}\ff \leq \frac{52}{3(k+1)}. 
		\end{equation}
	\end{theo}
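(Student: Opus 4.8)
The plan is to convert the per-step sufficient decrease of the merit function $h$ from Lemma~\ref{Le_Decrease_all} into summability, and then read off both the rates and the qualitative convergence from it. First I would sum the inequality \eqref{eq:hred} over $i=0,\dots,k$. Since Lemma~\ref{Le:upper_bound_dist} keeps every iterate in $\SR{\tau}\subset\SR{1/2}$, the values $f(X_i)+r(X_i)$ are bounded, so $\{h(X_i)\}$ is nonincreasing and bounded below; hence $h_\infty:=\lim_i h(X_i)$ exists and telescoping gives
\[
\sum_{i=0}^{k}\left( \frac{1}{4\eta_i}\norm{Y_i - X_i}\fs + \frac{3}{16}(L_f+L_r+1)\norm{X_i\tp X_i - I_p}\ff \right) \le h(X_0) - h_\infty .
\]
Bounding $h(X_0)-h_\infty$ by the stated constant via $X_0\in\SR{\tau}$ and the Lipschitz estimates is where the explicit numbers in \eqref{l1}, \eqref{l2} are produced; the rest is elementary. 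For \eqref{l2} I would bound $\min_i\norm{X_i\tp X_i - I_p}\ff$ by the average of the summed feasibility terms. For \eqref{l1} I would first use $\eta_i\ge\tilde\eta/2$ to replace $\eta_i^{-2}$ by $\tfrac{2}{\tilde\eta}\eta_i^{-1}$, bound the minimum by the average, invoke the summed estimate on $\tfrac1{\eta_i}\norm{Y_i-X_i}\fs$, and take square roots.

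Next I would establish the accumulation point and its feasibility. Boundedness of $\{X_k\}$ from Lemma~\ref{Le:upper_bound_dist} yields an accumulation point $X^*$ by Bolzano–Weierstrass. The summability above forces the whole-sequence limits $\norm{X_i\tp X_i-I_p}\ff\to 0$ and $\norm{Y_i-X_i}\ff\to 0$ (the latter because $\norm{Y_i-X_i}\fs=\eta_i\cdot\tfrac1{\eta_i}\norm{Y_i-X_i}\fs\le\tilde\eta\cdot(\to0)$). Consequently $X^*$ is feasible, $(X^*)\tp X^*=I_p$, and along the convergent subsequence $X_{i_j}\to X^*$ we also have $Y_{i_j}\to X^*$ and $\tfrac1{\eta_{i_j}}(Y_{i_j}-X_{i_j})\to 0$.

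The crux is passing to the limit in the exact subproblem optimality relation used in Lemma~\ref{Le_subprob_descrease}, namely $0\in\nabla f(X_k)-X_k\Lambda_k+\tfrac1{\eta_k}(Y_k-X_k)+\partial r(Y_k)$, where $\Lambda_k\in\Spp$. I would first show the multiplier sequence is bounded: writing $X_k\Lambda_k=\nabla f(X_k)+\tfrac1{\eta_k}(Y_k-X_k)+W_k$ with $W_k\in\partial r(Y_k)$, each term on the right is bounded ($\norm{\nabla f(X_k)}\ff\le L_f$, $\norm{W_k}\ff\le L_r$, and $\tfrac1{\eta_k}\norm{Y_k-X_k}\ff$ is controlled by \eqref{eq:feas} and $\eta_k\ge\tilde\eta/2$), while $X_k\tp X_k$ is invertible with uniformly bounded inverse on $\SR{\tau}$, so $\Lambda_k=(X_k\tp X_k)^{-1}X_k\tp(\cdots)$ is bounded. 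Passing to a further subsequence with $\Lambda_{i_j}\to\Lambda^*$ and $W_{i_j}\to W^*$, the closed graph (outer semicontinuity) of $\partial r$ together with $Y_{i_j}\to X^*$ gives $W^*\in\partial r(X^*)$, and the limit of the inclusion reads $\nabla f(X^*)+W^*-X^*\Lambda^*=0$.

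Finally I would reconcile this with Definition~\ref{Defin:FOSP_Ori}. Left-multiplying by $(X^*)\tp$ and using $(X^*)\tp X^*=I_p$ gives $\Lambda^*=(X^*)\tp(\nabla f(X^*)+W^*)$; since each $\Lambda_{i_j}$ is symmetric, so is $\Lambda^*$, whence $\Lambda^*=\Phi\!\left((X^*)\tp(\nabla f(X^*)+W^*)\right)$, and substituting back recovers exactly the stationarity system \eqref{new1}. I expect the main obstacle to be precisely this limiting argument: guaranteeing uniform boundedness of the multiplier sequence $\{\Lambda_k\}$ and then correctly matching the limiting relation to the symmetrized $\Phi(\cdot)$ form of first-order stationarity; by contrast, deriving the explicit constants in \eqref{l1}–\eqref{l2} is careful but routine bookkeeping once the bound on $h(X_0)-h_\infty$ is in hand.
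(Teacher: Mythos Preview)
Your proposal is correct. The telescoping/summability part and the derivation of the rates \eqref{l1}--\eqref{l2} are essentially what the paper does (the paper also bounds $h(X_0)-\lim_N h(X_N)$ explicitly by $\tfrac{13}{4}L_f+\tfrac{13}{4}L_r+\tfrac{3}{4}$ using $\|X_k\|\ff\le\sqrt{5}/2$, which is the bookkeeping you flag as routine).

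Where you genuinely diverge from the paper is in the stationarity argument. The paper does not pass to the limit in the KKT inclusion directly; instead it invokes Berge's Maximum Theorem to assert that the (exact) minimizer of the tangential subproblem is continuous in $(\Xk,\eta_k)$, defines $\bar Y$ as the minimizer of the limiting subproblem at $(\bar X,\bar\eta)$, and then argues from $\norm{Y_{k_j}-X_{k_j}}\ff\to 0$ and $\norm{X_{k_j}\tp X_{k_j}-I_p}\ff\to 0$ that $\bar Y=\bar X$, whence the KKT system of the subproblem at $\bar X$ yields \eqref{new1}. Your route---bounding $\Lambda_k$ via $\Lambda_k=(X_k\tp X_k)^{-1}X_k\tp(\nabla f(X_k)+\tfrac{1}{\eta_k}(Y_k-X_k)+W_k)$, extracting convergent subsequences, and using outer semicontinuity of $\partial r$---is more elementary and self-contained, and it also makes transparent why the limiting multiplier is symmetric and equals $\Phi\big((X^*)\tp(\nabla f(X^*)+W^*)\big)$. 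The paper's route is terser once the Maximum Theorem is granted, but it has to be read carefully because $Y_k$ is only the exact minimizer of the \emph{shifted} problem \eqref{Eq_Le_subprob_allu_model}, not of \eqref{Subproblem_tangential}; continuity really has to be applied to that shifted family whose constraint right-hand side $\Phi(X_k\tp(Y_k-X_k))$ tends to $0$. Your argument sidesteps that subtlety entirely.
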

	\begin{proof}
		Summing up the inequality \eqref{eq:hred} from $k=0$ to $+\infty$, 
		we obtain
		\begin{equation}
					\label{Eq_sufficient_decrease}
					\begin{aligned}
						&\sum_{k = 0}^{+\infty} \left[ \frac{1}{4{\eta_k}} \norm{\Yk-\Xk}\fs + \frac{3}{16}\left(L_f+L_r+1\right)\norm{\Xk\tp\Xk - I_p}\ff\right]
						\leq  h(X_0) - \lim\limits_{N\to +\infty} h(X_N)\\
						\leq{} & \sup_{N\to +\infty} (L_f+L_r)\norm{X_N - X_0}\ff + \left(2L_f+2L_r+\frac{3}{2}\right)\cdot  2\cdot
						\frac{1}{4(1+c)^2}\\ 
						\leq{} & \left(\frac{13}{4}L_f+\frac{13}{4}L_r+\frac{3}{4}\right),
					\end{aligned}
		\end{equation}
			where the last inequality uses the fact $||X_k||\ff \leq \frac{\sqrt{5}}{2}$
			which is implied by the second inequality of \eqref{eq:feas}.
			Thus, it holds that
			$$\mathop{\lim}\limits_{k\to +\infty} \norm{\Yk - \Xk}\ff =0,\quad
			\mbox{and\quad} \mathop{\lim}\limits_{k\to +\infty} \norm{\Xk\tp \Xk - I_p}\ff =0.$$
			
		On the other hand, by the boundedness of $\{\Xk\}$,
			we know that this sequence exists accumulation point,
			and denote it by $\bar{X}$.
			Recalling the boundedness of $\{\Xk\}$,
			$\{\Yk\}$ and $\{\eta_k\}$, 
			without loss of generality, we can
			assume that  
		there exists a subsequence $\{k_j\}_{j=1,2,...}$  such that $X_{k_j} \to \bar{X}$ and meanwhile
			it holds that $\eta_{k_j}  \to \bar{\eta}$.
			It can be easily verified that 
			\begin{eqnarray}\label{eq:lim}
				\norm{{Y_{k_j}} - X_{k_j}}\ff \to 0
				\quad\mbox{and}\quad
				\norm{{X_{k_j}}\tp X_{k_j} - I_p}\ff \to 0,
			\end{eqnarray}
			which imply $\bar{X}\tp \bar{X} - I_p = 0.$
		
		For convenience, we invoke the Maximum Theorem stated in \cite[p.116]{berge1963topological} without proof. 
			The Maximum Theorem tells us 
			that the Lipschitz continuities of 
			$\nabla f$ and $r$ lead to the fact that the global minimizer of \eqref{Subproblem_tangential}
			is continuous with respect to $\Xk$. 
			We define $\bar{Y}$ as  
			\begin{equation*}
				\bar{Y} = \mathop{\arg\min}_{\Phi(D\tp \bar{X}) = \Phi(\bar{X}\tp \bar{X})} \inner{\nabla f(\bar{X}), D} + r(D ) + \frac{1}{2\bar{\eta}}\norm{D - \bar{X}}\fs.
			\end{equation*}
			Combining 
			the definition of the tangential step,
			the orthonormalization of $\bar{X}$, 
			the relation \eqref{eq:lim}, we have $\bar{Y} = \bar{X}$. By simple calculation, we can conclude that
			$\bar{X}$ satisfies \eqref{new1}
			and hence is a first-order stationary point of \eqref{Prob_Ori}.

			By summing up the inequality \eqref{eq:hred} from $i=0$ to $k$,
			and using the same deduction in \eqref{Eq_sufficient_decrease},
			we obtain 
			\begin{eqnarray*}
				&\sum\limits_{i = 0}^k \frac{1}{4{\eta_i}^2}\norm{{Y_{i}} - X_i}\fs \leq \sum\limits_{0 = 1}^k \frac{1}{2 \bar{\eta}\eta_i}\norm{{Y_{i}} - X_i}\fs \leq 
				\frac{13L_f+13L_r+3}{2\bar{\eta}},&\\
				&\mbox{and}\quad \sum\limits_{i = 0}^k \frac{3}{16}\left(L_f+L_r+1\right)\norm{{X_i}\tp X_i - I_p}\ff \leq 
				\frac{13L_f+13L_r+3}{4}, &
			\end{eqnarray*}
			which imply the inequalities \eqref{l1} and \eqref{l2} immediately.
		
	\end{proof}

	\subsection{Orthonormalization as post-process}
	\label{section_sub_post_process}
	In the last subsection, we present a result on how the post-process 
		affects the value of the merit function.
	\begin{prop}
		\label{Prop_Orthonormalization}
		Suppose $X$ satisfying $\norm{X\tp X - I_p}\ff \leq \frac{1}{4}$. 
			Let $X = U\Sigma V\tp$ be the SVD of $X$ in economic size for $X$ 
			and we set $\Xo := UV\tp$, then it holds that
		\begin{equation*} 
			h(\Xo) \leq h(X) - \left(L_f+L_r+\frac{3}{2}\right)  \norm{X\tp X - I_p}\ff. 
		\end{equation*}
	\end{prop}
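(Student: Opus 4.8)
The goal is to compare the merit function $h$ at $\Xo := UV\tp$, the orthonormalized version of $X$, against $h$ at $X$. Since $\Xo$ is feasible, the penalty term $\left(2L_f+2L_r+\frac{3}{2}\right)\norm{\Xo\tp\Xo - I_p}\ff$ vanishes, so I need to bound the change in $f+r$ against the penalty term that $X$ carries. The plan is to split $h(\Xo) - h(X)$ into the objective difference $\left(f(\Xo)+r(\Xo)\right) - \left(f(X)+r(X)\right)$ plus the penalty difference $-\left(2L_f+2L_r+\frac{3}{2}\right)\norm{X\tp X - I_p}\ff$, then control the objective difference by the Lipschitz constants of $f$ and $r$ times $\norm{\Xo - X}\ff$.

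**The key geometric estimate.**

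The crux is to bound $\norm{\Xo - X}\ff$ in terms of $\norm{X\tp X - I_p}\ff$. Writing $X = U\Sigma V\tp$ with singular values $\sigma_i$, I have $\Xo - X = U(I_p - \Sigma)V\tp$, so $\norm{\Xo - X}\fs = \sum_i (1-\sigma_i)^2$, while $\norm{X\tp X - I_p}\fs = \sum_i (\sigma_i^2 - 1)^2 = \sum_i (\sigma_i-1)^2(\sigma_i+1)^2$. Since $(\sigma_i+1)^2 \geq 1$ for $\sigma_i \geq 0$, this immediately gives $\norm{\Xo - X}\ff \leq \norm{X\tp X - I_p}\ff$. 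The hypothesis $\norm{X\tp X - I_p}\ff \leq \frac14$ keeps all $\sigma_i$ in a neighborhood of $1$ (in particular bounded away from zero), which is what justifies working on $\SR{1/2}$ where the Lipschitz constants $L_f, L_r$ are defined; both $X$ and $\Xo$ lie in this region.

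**Assembling the bound.**

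With the geometric estimate in hand, the Lipschitz continuity of $f$ and $r$ yields
\begin{equation*}
\left(f(\Xo)+r(\Xo)\right) - \left(f(X)+r(X)\right) \leq (L_f + L_r)\norm{\Xo - X}\ff \leq (L_f+L_r)\norm{X\tp X - I_p}\ff.
\end{equation*}
Combining with the vanishing penalty at $\Xo$,
\begin{equation*}
h(\Xo) - h(X) \leq (L_f+L_r)\norm{X\tp X - I_p}\ff - \left(2L_f+2L_r+\tfrac{3}{2}\right)\norm{X\tp X - I_p}\ff,
\end{equation*}
and the coefficient simplifies to $-\left(L_f+L_r+\frac{3}{2}\right)$, which is exactly the claimed bound.

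**Anticipated obstacle.**

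I expect the only genuine subtlety to be the geometric inequality $\norm{\Xo - X}\ff \leq \norm{X\tp X - I_p}\ff$ and the verification that both points stay in $\SR{1/2}$ so that the Lipschitz constants apply. The SVD reduction makes this a clean per-singular-value comparison, so the factor $(\sigma_i+1)^2 \geq 1$ does all the work; I do not foresee needing the $\frac14$ bound for the inequality itself, only to guarantee the feasibility region membership where $L_f$ and $L_r$ control the objective. Everything else is bookkeeping on the constants.
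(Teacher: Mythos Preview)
Your proposal is correct and follows essentially the same route as the paper: both establish $\norm{\Xo - X}\ff \leq \norm{X\tp X - I_p}\ff$ via the SVD and the factor $(\sigma_i+1)\geq 1$, then apply the Lipschitz bounds on $f$ and $r$ and subtract the penalty term. Your additional remark that the hypothesis $\norm{X\tp X - I_p}\ff\leq \tfrac14$ is used to keep both $X$ and $\Xo$ inside $\SR{1/2}$ (where $L_f,L_r$ are defined) is a point the paper leaves implicit.
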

	\begin{proof}
		Firstly, by simple calculation, we have 
		\begin{equation}
			\label{Eq_Prop_Orthonormalization_1}
			\norm{\Xo - X}\ff = \norm{\Sigma - I_p}\ff \leq \norm{(\Sigma + I_p)(\Sigma - I_p)}\ff = \norm{X\tp X - I_p}\ff.
		\end{equation}
		
		Then by the Lipschitz continuity of $f$ and $r$, we obtain
		\begin{equation*}
			\begin{aligned}
				\left(f(\Xo) + r(\Xo)\right) - \left(f(X) + r(X)\right) 
				\leq   (L_f+L_r)\norm{\Xo - X}\ff \leq  (L_f+L_r)\norm{X\tp X - I_p}\ff,
			\end{aligned}
		\end{equation*}
		which implies
		\begin{equation*}
				\begin{aligned}
					&h(\Xo) - h(X) \\
					\leq{}&  \left(f(\Xo) + r(\Xo)\right) - \left(f(X) + r(X)\right)
					-\left(2L_f+2L_r+\frac{3}{2}\right) \norm{X\tp X - I_p}\ff\\
					\leq{}&  -\left(L_f+L_r+\frac{3}{2}\right)  \norm{X\tp X - I_p}\ff. 
				\end{aligned}
		\end{equation*}
	\end{proof}

	\section{Numerical Experiments}
	In this section, we perform preliminary numerical experiments to illustrate the efficiency and the robustness of \SLPGs.
		We first present the test settings including how to choose the parameters
		in SLPG, introduce the test problems and then illustrate some observations in 
		the numerical tests.
		Then we compare SLPG with some of the state-of-the-art algorithms on these test problems. 
	
	All the numerical experiments in this section are run in serial in a platform with Intel(R) Xeon(R) Silver 4110 CPU @ 2.10GHz and 
	384GB RAM running MATLAB R2018a under Ubuntu 18.10. 
	
	\subsection{Test settings}
	
	Theorem \ref{The_convergence} has provided a range for choosing the stepsize parameter
		$\eta_k$ with guaranteed convergence. However, such choice is too restrictive to
		be practically useful. In this section, we suggest to adopt the following  
		extended version,
		which was first proposed in \cite{wen2010fast}, of Barzilar-Borwein (BB) stepsize \cite{barzilai1988two} in \SLPG. 
	\begin{equation}
		\label{Eq_Stepsize}
		\eta_{k} =	{\inner{S^k,V^k}} \left/ {\inner{V^k,V^k}}\right.,
	\end{equation}
	where $S^k = X_{k} - X_{k-1}$ and $$V^k = \left[\nabla f(\Xk) - \Xk\Phi(\Xk\tp \nabla f(\Xk)) \right] - \left[\nabla f(X_{k-1}) - X_{k-1}\Phi({X_{k-1}}\tp \nabla f({X_{k-1}})) \right].$$ 
	
	In Algorithm \ref{Alg:FPI}, we set the maximum iterations as $10$ and choose the stepsize $t$ as $1/\eta_{k}$. In \SLPG, we also adopt the warm-start technique 
		in selecting the initial guess of Algorithm \ref{Alg:FPI}. Namely,
		$\Lambda_0$ in the $k+1$-th iteration can be set as the last $\Lambda_j$ in the $k$-th iteration. Besides, we set the constant $c$ as $1000$ in Condition \ref{Assumption_2}.
	
	In this paper, the substationarity, the feasibility violation (``feasibility" for short) and that of the tangential subproblem (``TS feasibility" for short) at the $k$-th iterate are estimated by
	\begin{eqnarray*}
		\norm{\Yk-\Xk}\ff/\eta_k,\quad
		\norm{\Xk\tp\Xk - I_p}\ff,\quad
		\mbox{and\,\,}
		\norm{\Phi(\Xk\tp ( \Yk - \Xk))}\ff,
	\end{eqnarray*}
	respectively. 
	Unless otherwise stated, 
	\SLPGs terminates if either the 
	stopping criteria $ \norm{\Yk - \Xk}\ff/\eta_k \leq 10^{-4}$
	is satisfied or the maximum number of iterations $10000$ is reached.

	\subsection{Test Problems}
	We adopt Problems  \ref{Example_SPCA}-\ref{Example_KS} as the test problems.
		Unless otherwise stated,
		for Problems  \ref{Example_SPCA} and \ref{Example_l21PCA}, 
		we set the covariance matrix $L\in\bb{R}^{1000\times 1000}$
		of $200$ randomly generated samples $S = \mathrm{randn}(1000,200)$
		with unified normalization as the following
		\begin{eqnarray}\label{eq:co}
			L = SS\tp/\norm{S}^2_2.
		\end{eqnarray}
	For Problem  \ref{Example_KS}, we uses the test instances as ``h2o'' molecular from KSSOLV toolbox  \cite{yang2009kssolv}. Additionally, the initial points are chosen as the leading $p$ eigenvectors of $L$ for Problems  \ref{Example_SPCA} and \ref{Example_l21PCA}, or 
		generated by the build-in function ``getX0'' in KSSOLV toolbox \cite{yang2009kssolv} for Problem  \ref{Example_KS}. 
	
	\subsection{Observations in testing \SLPG}
	
	We first investigate how the substationarity, 
	feasibility and TS feasibility vary
		in the running of \SLPGs without post-process in solving Problems  \ref{Example_SPCA} and  \ref{Example_l21PCA}
		with randomly generated data. 
	We put the numerical results in
		Figure \ref{Fig_iter_feas}. The blue, red and yellow lines
		represent the substationarity, feasibility and TS feasibility, respectively.
		The problem parameters are listed below the subfigures.
		We can learn from
		Figure \ref{Fig:l21pca}-\ref{Fig:ks} that the feasibility violation of \SLPGs is actually a high order 
		infinitesimal of the substationarity, which coincides with the 
		theoretical results in Lemma \ref{Le_bound_X}. 
	It is worthy of mentioning that \SLPGs decreases the  feasibility violation much faster than
		those existing infeasible first-order approaches, such as PLAM, PCAL in \cite{gao2019parallelizable}, and PenCF from \cite{xiao2020class}, in solving \eqref{Prob_Ori}.
	%
	Although we can only theoretically establish 
	the global sublinear convergence rate for \SLPG, 
	in Figure \ref{Fig_iter_feas}, 
	we have observed its local linear convergence rate in solving Problems \ref{Example_SPCA}-\ref{Example_KS}.
	
	\begin{figure}[!htbp]
		\centering
		\subfigure[Problem \ref{Example_SPCA}, $(p,\gamma) = (5,0.007)$]{
			\begin{minipage}[t]{0.33\linewidth}
				\centering
				\includegraphics[width=\linewidth]{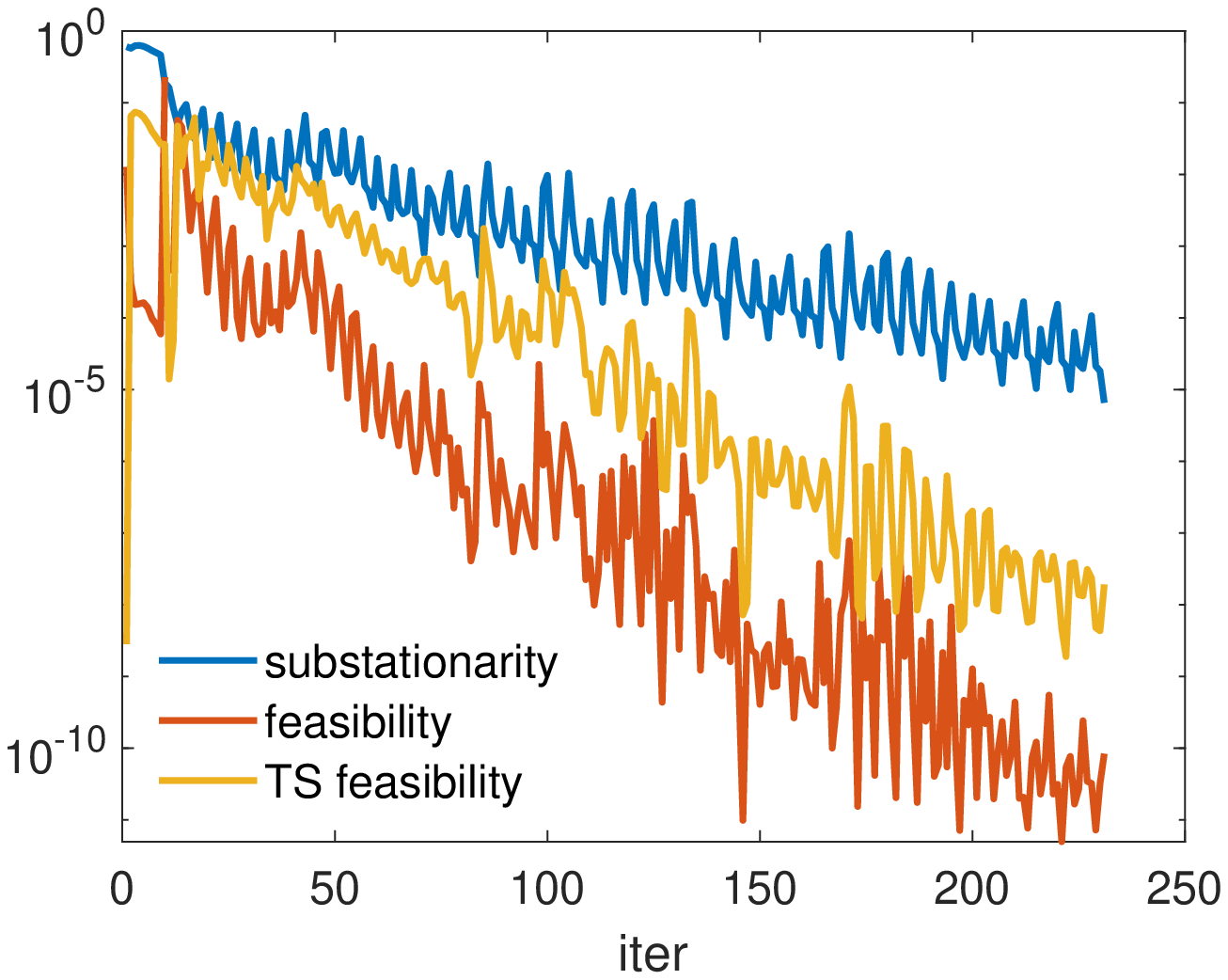}
				\label{Fig:l21pca}
			\end{minipage}%
		}%
		\subfigure[Problem \ref{Example_l21PCA}, $(p,\gamma) = (5,0.001)$]{
			\begin{minipage}[t]{0.33\linewidth}
				\centering
				\includegraphics[width=\linewidth]{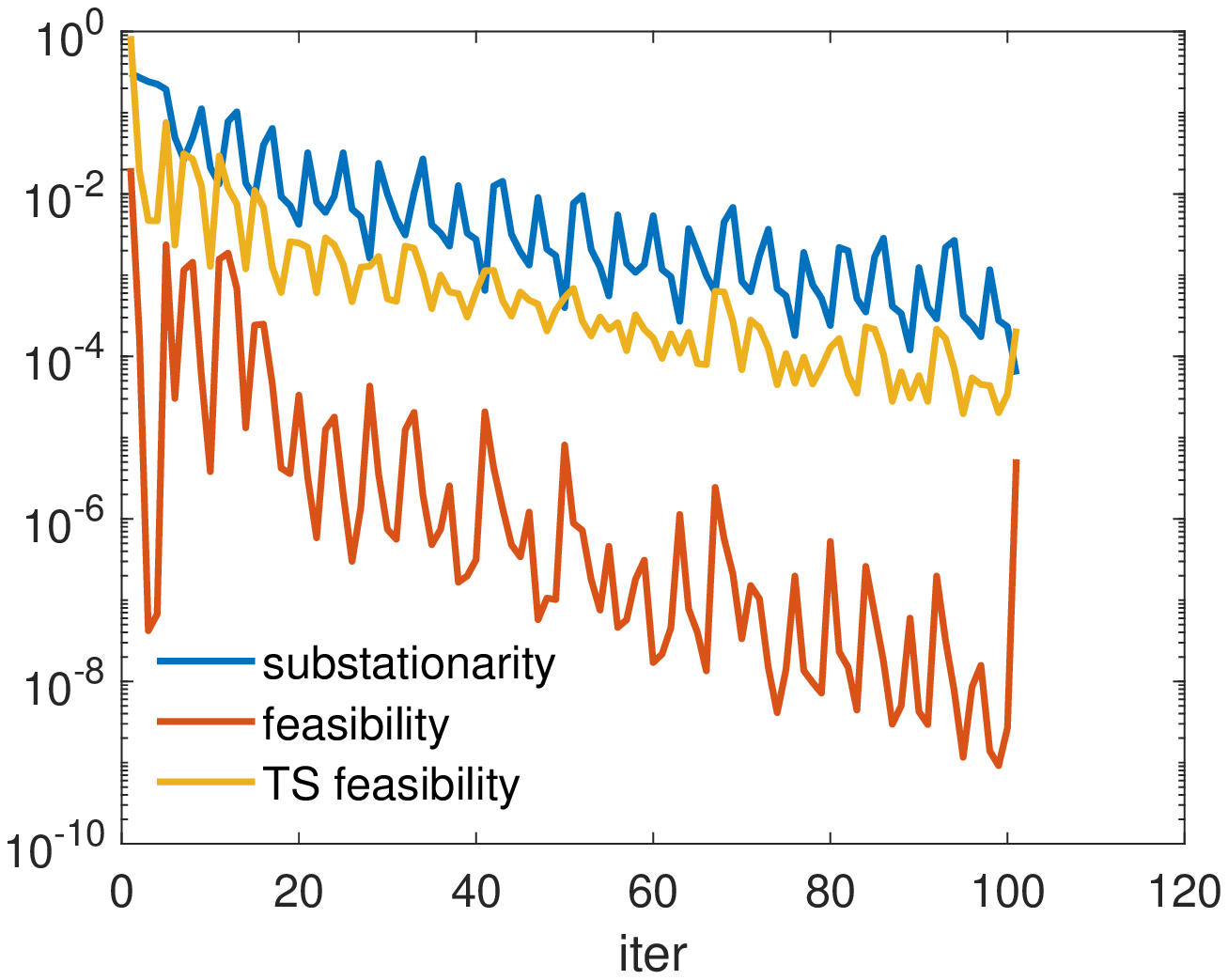}
				\label{Fig:l1pca}
			\end{minipage}%
		}%
		\subfigure[Problem \ref{Example_KS}, $(n,p) = (2013,7)$]{
			\begin{minipage}[t]{0.33\linewidth}
				\centering
				\includegraphics[width=\linewidth]{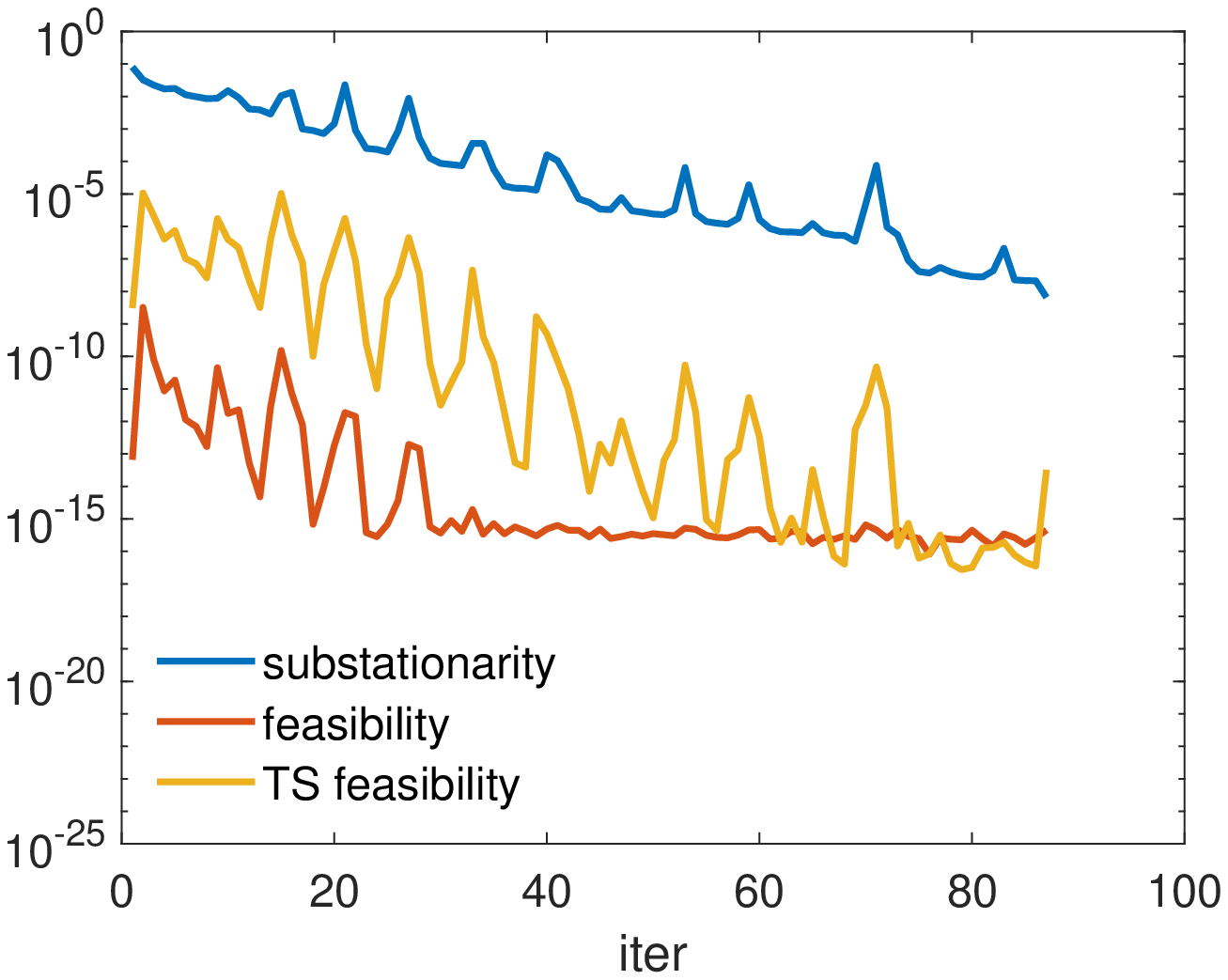}
				\label{Fig:ks}
			\end{minipage}%
		}
		\caption{The substationarity, feasibility and TS feasibility of \SLPG.}
		\label{Fig_iter_feas}
	\end{figure}

	Next we investigate how the post-process of \SLPGs affects the substationarity
		by testing \SLPGs in solving Problems \ref{Example_SPCA} and \ref{Example_l21PCA} with randomly generated
		data. 
		We display the substationarity and the feasibility of \SLPGs without the post-process, and
		the difference on the substationarity of \SLPGs after imposing the post-process as the 
		blue, red and yellow lines, respectively, in Figure \ref{Fig_iter_kkts}. 
		The problem parameters are listed below the subfigures.
		We can learn from Subfigures \ref{Fig:l21pca_kkt} and \ref{Fig:l1pca_kkt} 
		that the post-process only affects the substationarity a little. In fact, 
		the difference is a high order infinitesimal
		of the substationarity, which can partly be explained as the feasibility violation
		itself is a high order infinitesimal of the substationarity.
	
	\begin{figure}[!htbp]
		\centering
		\subfigure[Problem \ref{Example_SPCA}, $(p, \gamma) = (5, 0.007)$]{
			\begin{minipage}[t]{0.45\linewidth}
				\centering
				\includegraphics[width=\linewidth, height=0.20\textheight]{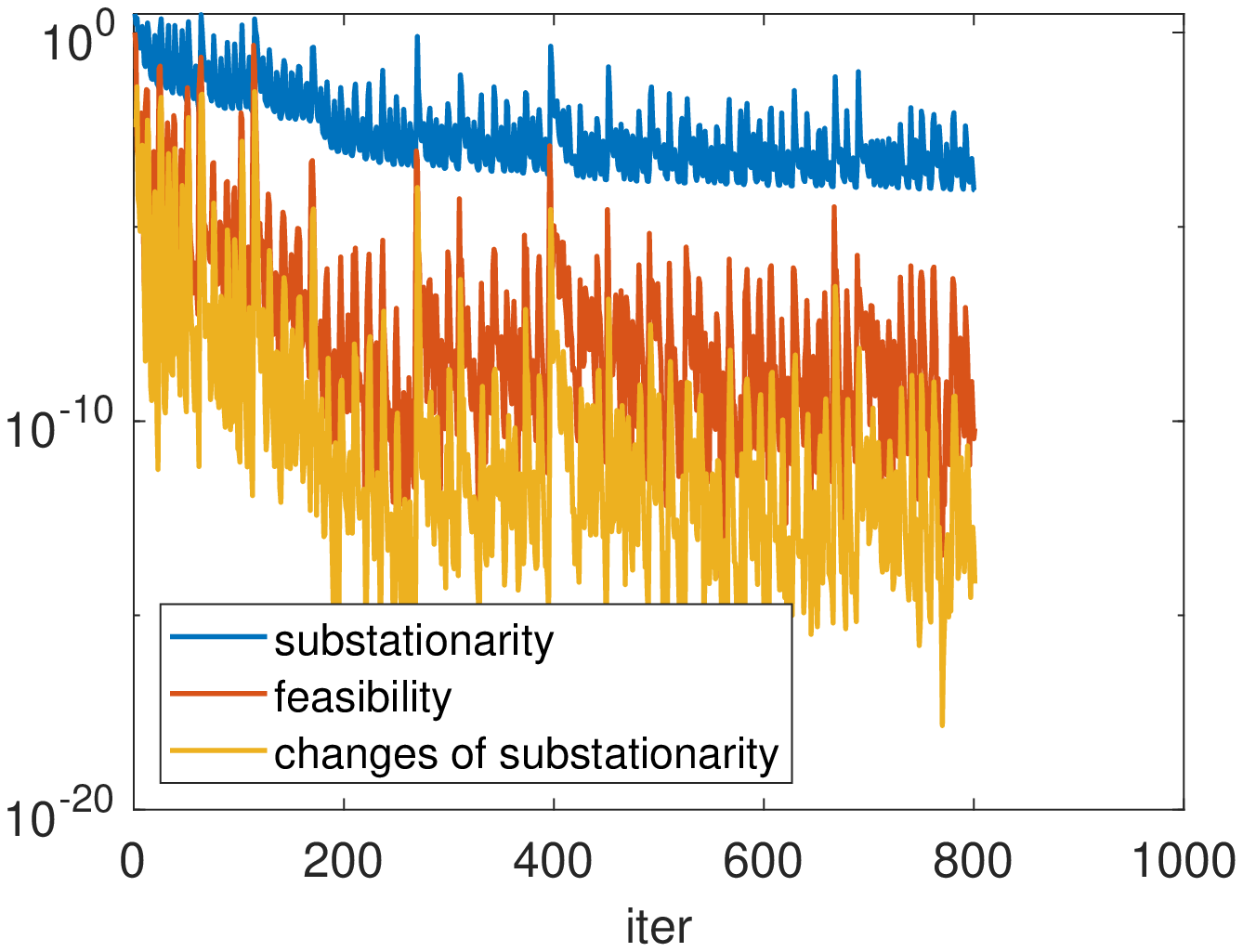}
				\label{Fig:l1pca_kkt}
			\end{minipage}%
		}%
		\subfigure[Problem \ref{Example_l21PCA}, $(p, \gamma) = (5,0.001)$]{
			\begin{minipage}[t]{0.45\linewidth}
				\centering
				\includegraphics[width=\linewidth, height=0.20\textheight]{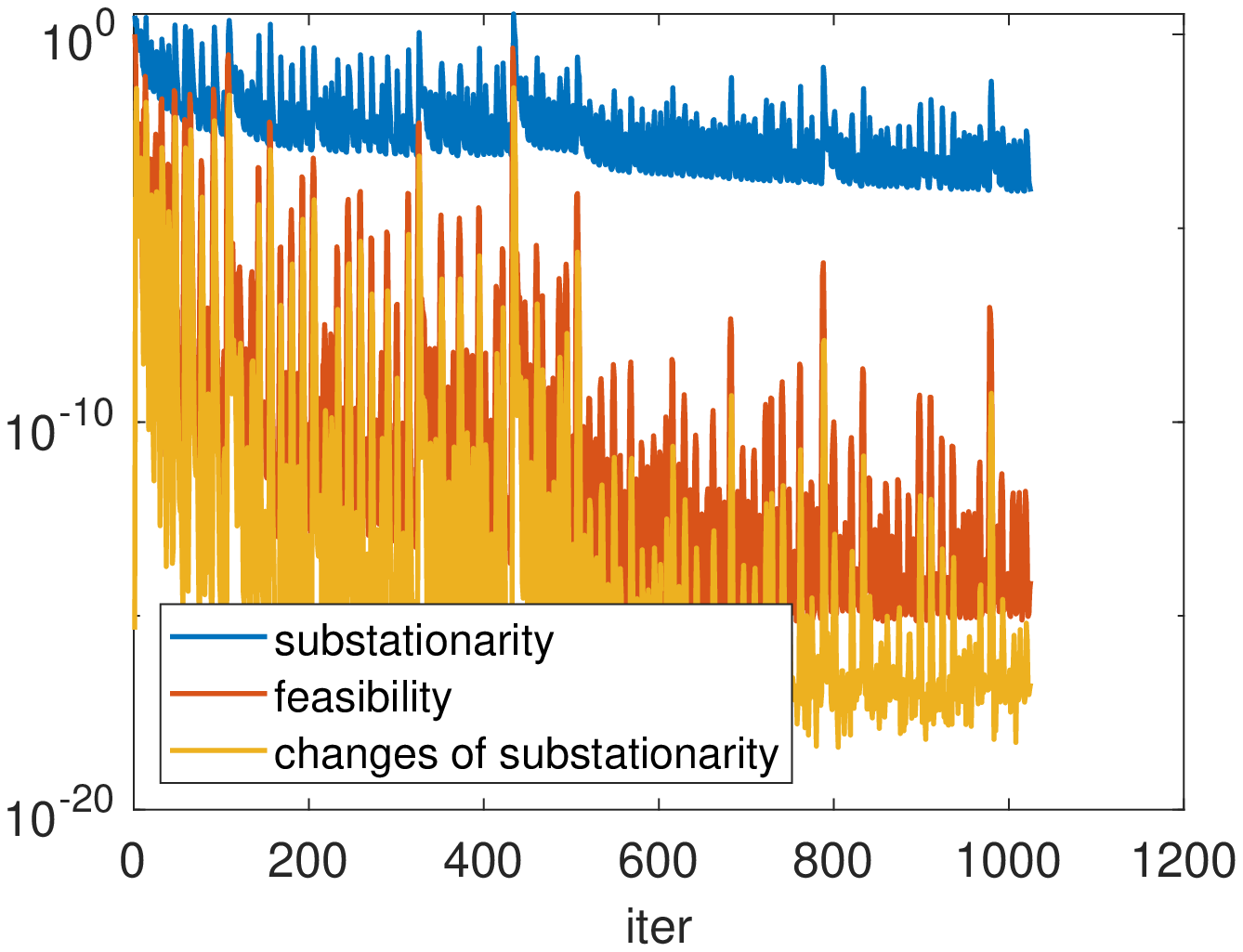}
				\label{Fig:l21pca_kkt}
			\end{minipage}%
		}%
		\caption{\revise{The effect of the post-process.}}
		\label{Fig_iter_kkts}
	\end{figure}
	
	We note that our original problem \eqref{Prob_Ori} 
		is nonconvex, hence it is expected to have multi-stationary points.
		Therefore, it is meaningful to check how the initial guesses affect the performance of
		\SLPG. 
		We generate two data sets by \eqref{eq:co} 
		for Problems \ref{Example_SPCA} and \ref{Example_l21PCA}, 
		respectively. 
		%
		Then we fix these two data sets 
		and run \SLPGs for $1000$ times with different randomly 
		generated initial points $X_0 = \mathrm{qr}(\mathrm{randn}(n,p))$
		for each problem. 
		To achieve high precision in function value, we set the stopping criteria as
		$\norm{\Yk - \Xk}\ff / \eta_k \leq 10^{-10}$ here. 
		We regard
		the function values varying in a range less than $10^{-7}$
		as one value due to the possible numerical error.
		We study the function value distribution in the $1000$ runs for each problem and put the results into 
		Figure \ref{Fig_multistart}. The problem parameters are listed below the subfigures. 
		From both Subfigures \ref{Fig:l1pca_fval} and \ref{Fig:l21pca_fval}, we can conclude that 
		\SLPGs has high probability to reach the lowest function values,
		which could be regarded as good estimates of the global minimizers of Problems \ref{Example_SPCA} and \ref{Example_l21PCA}, respectively, 
		with certain probability.
	\begin{figure}[!htbp]
		\centering
		\subfigure[Problem \ref{Example_SPCA}, $(p,\gamma) = (5,0.05)$]{
			\begin{minipage}[t]{0.45\linewidth}
				\centering
				\includegraphics[width=\linewidth, height=0.20\textheight]{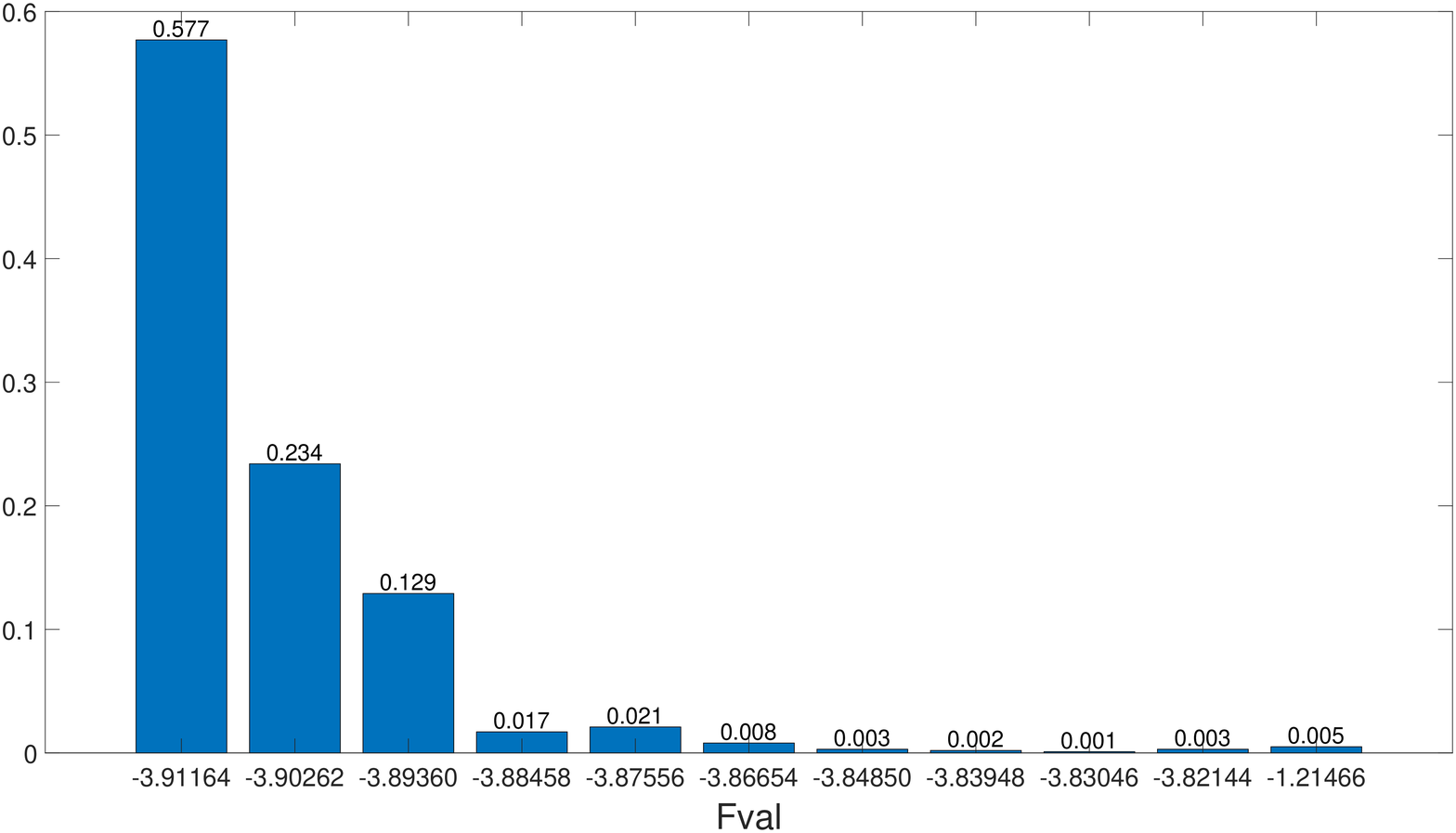}
				\label{Fig:l1pca_fval}
			\end{minipage}%
		}%
		\subfigure[Problem \ref{Example_l21PCA}, $(p,\gamma) = (5,0.001)$]{
			\begin{minipage}[t]{0.45\linewidth}
				\centering
				\includegraphics[width=\linewidth, height=0.20\textheight]{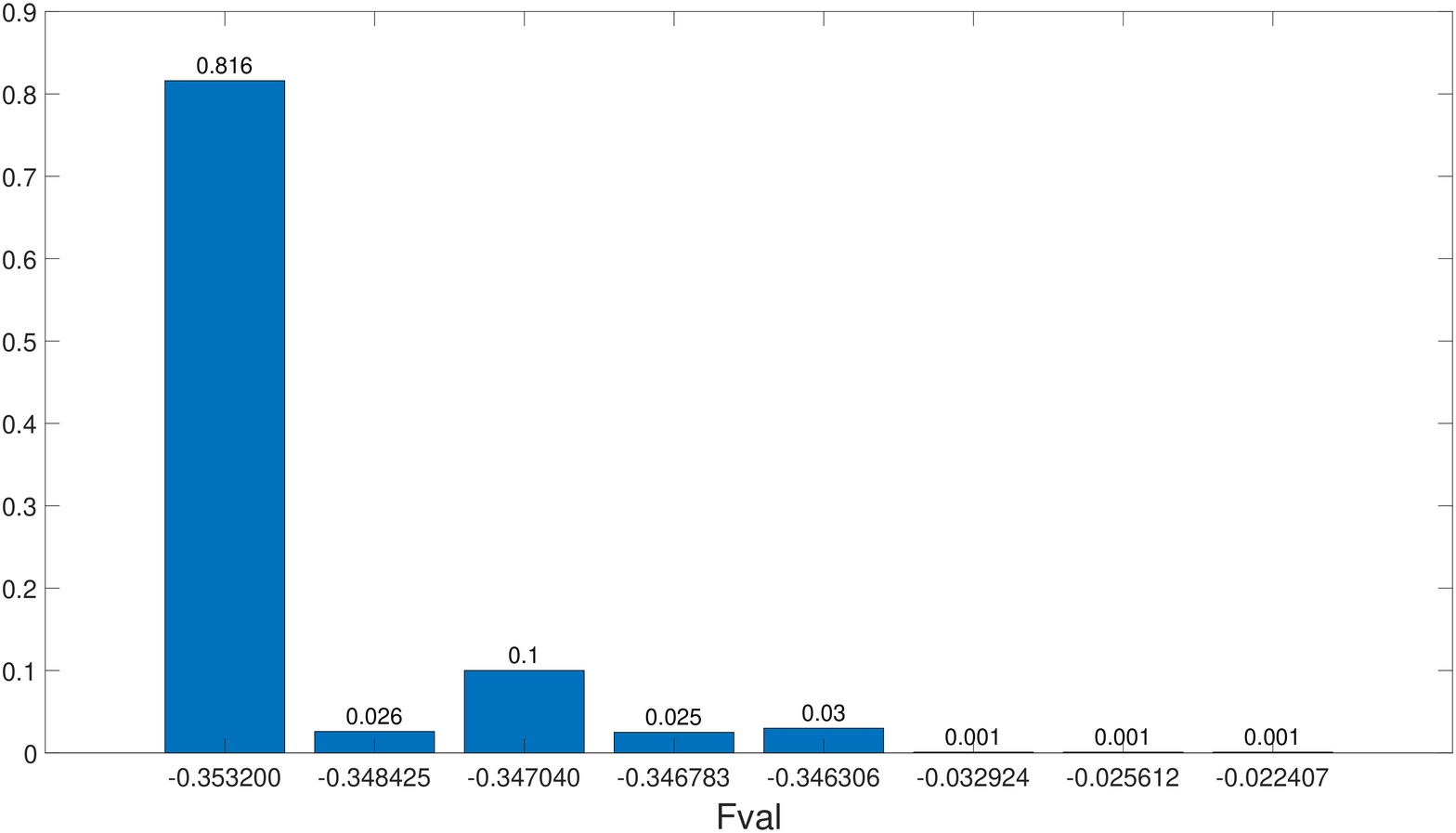}
				\label{Fig:l21pca_fval}
			\end{minipage}%
		}%
		\caption{Function value distributions of \SLPGs.}
		\label{Fig_multistart}
	\end{figure}

		Finally, we investigate how the inner solver affects the overall performance of \SLPG.
		We compare our fixed point iteration Algorithm \ref{Alg:FPI} with the frequently used 
		semi-smooth Newton methods in solving 
		Problem \ref{Example_SPCA}. 
		In our numerical examples, \SLPGs refers to \SLPGs where the subproblem is solved by Algorithm \ref{Alg:FPI} while SLPG+SSN refer to the algorithm where the the subproblems are solved by semi-smooth Newton methods. The parameters of the 
		semi-smooth Newton method 
		adopted in SLPG+SSN are fixed as its default setting as 
		stated in \cite{huang2019extending}. Figure \ref{Fig_subproblems} illustrates the performance of \SLPGs and SLPG+SSN under different column size with $(n,\gamma)$ fixed as $(4000, 0.07)$. From subfigures \ref{Fig:subamanpg2}-\ref{Fig:subamanpg3} we can learn that \SLPGs 
		reaches the same function value in same number of iterations, but requires
		slightly less CPU time than SLPG+SSN. That is the reason
		we use Algorithm \ref{Alg:FPI} as the default inner 
		solver in \SLPG.

	\begin{figure}[!htbp]
		\centering
		\subfigure[$(n,\gamma) = (4000,0.007)$]{
			\begin{minipage}[t]{0.33\linewidth}
				\centering
				\includegraphics[width=\linewidth, height=0.15\textheight]{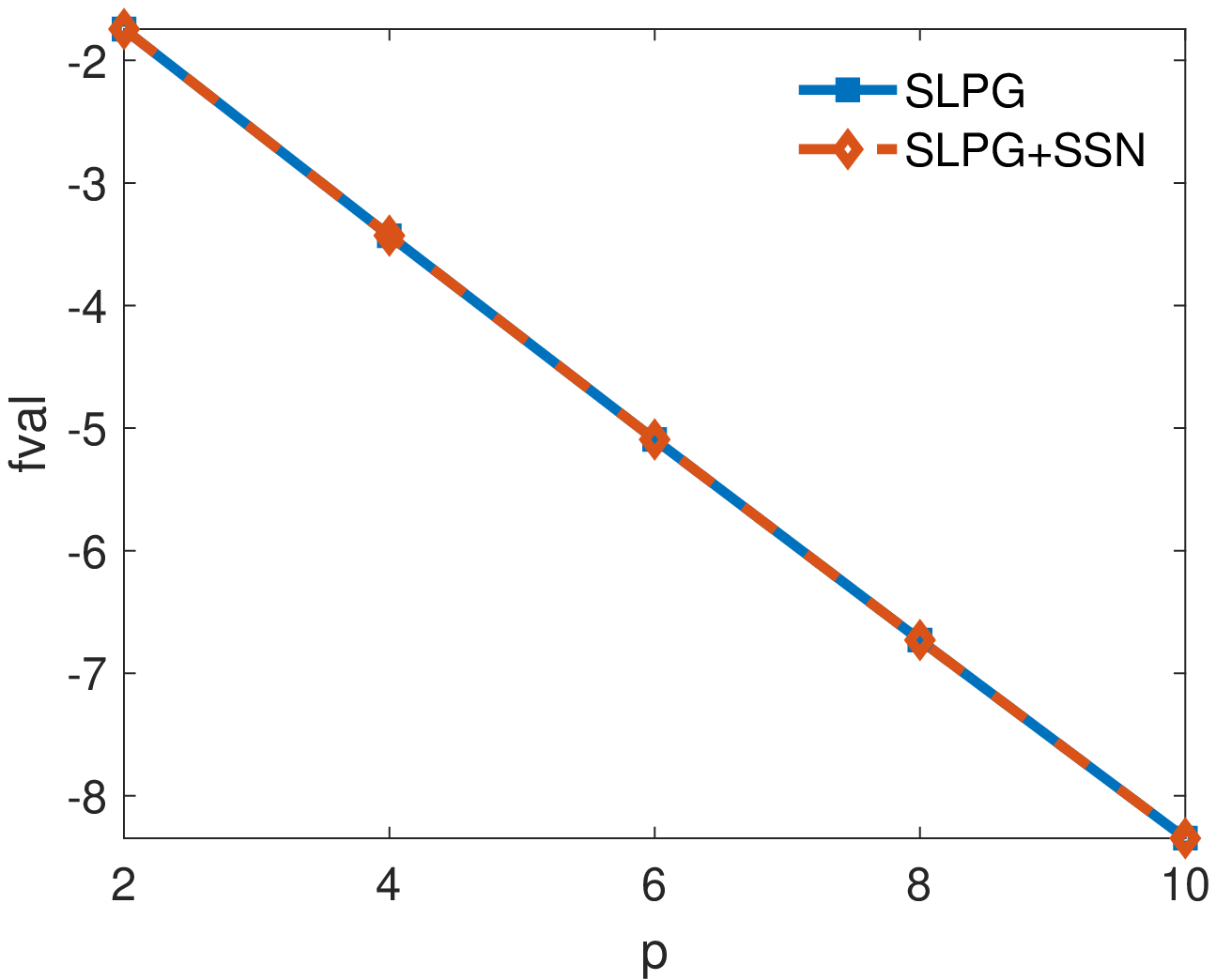}
				\label{Fig:subamanpg1}
			\end{minipage}%
		}%
		\subfigure[$(n,\gamma) = (4000,0.007)$]{
			\begin{minipage}[t]{0.33\linewidth}
				\centering
				\includegraphics[width=\linewidth, height=0.15\textheight]{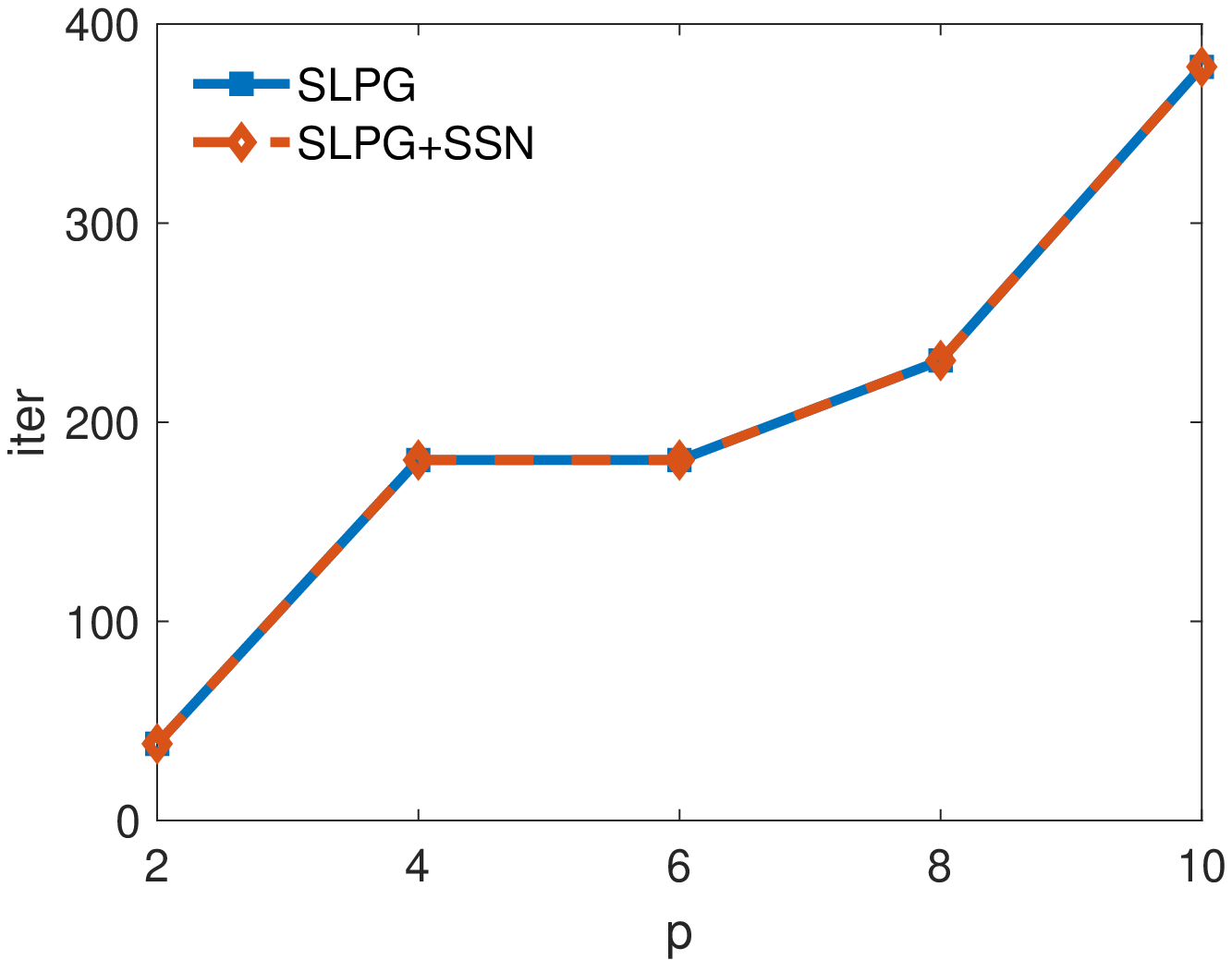}
				\label{Fig:subamanpg2}
			\end{minipage}%
		}%
		\subfigure[$(n,\gamma) = (4000,0.007)$]{
			\begin{minipage}[t]{0.33\linewidth}
				\centering
				\includegraphics[width=\linewidth, height=0.15\textheight]{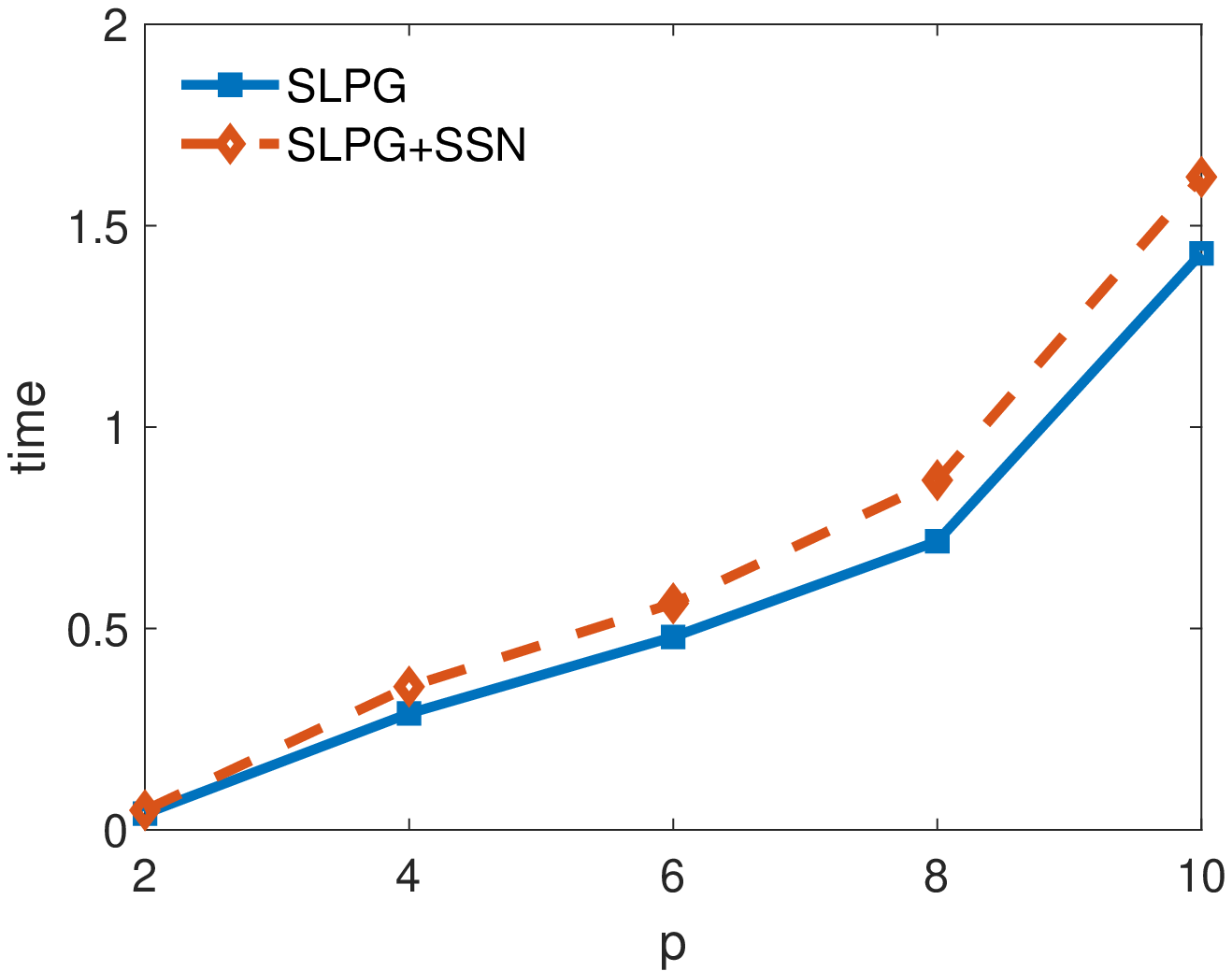}
				\label{Fig:subamanpg3}
			\end{minipage}%
		}%
		%
		\caption{The comparison of inner solver.}
		\label{Fig_subproblems}
	\end{figure}

	\subsection{$\ell_{2,1}$-norm regularized PCA}
	\label{subsection_numerical_l21}
	In this subsection, we first compare \SLPGs with some of the state-of-the-art algorithms including ManPG-Ada and PenCPG. Then, we further investigate the 
	robustness brought by our penalty-free scheme. 
	The first algorithm in comparison is ManPG-Ada, which is an accelerated version of ManPG \cite{chen2019manifold}. The second one is PenCPG, which is an infeasible proximal gradient method based on the closed-form expression of the multipliers. In our experiments, 
	all three algorithms are run in their default settings.
	As suggested in \cite{gao2017sparse}, 
	the penalty parameter of Problem \ref{Example_l21PCA}
	is set as $\gamma = b\sqrt{p+\log(n)}$, where parameter $b$ 
	is used to control the sparsity.
	
	Figure \ref{Fig_l21PCA_Coefficients} illustrates the performance of the  
	three algorithms in comparison in solving Problem \ref{Example_l21PCA}
	with different combinations of $n$, $p$, $b$. The detailed problem
	parameters are listed below the subfigures.
	As illustrated in Figure \ref{Fig_l21PCA_Coefficients}, 
	all of these three algorithm reach the same function values.
	\SLPGs takes fewer iterations than the other two, meanwhile
	it takes much less CPU time than ManPG-Ada.
	Since PenCPG does not have any subproblem to solve, it has the lowest computational
	cost in each iteration among the three. 
	Finally, it only takes slightly less CPU time than \SLPGs.
	We can conclude that \SLPGs is superior to the other two algorithms
	in the testing problems.

	\begin{figure}[!htbp]
		\centering
		\subfigure[$(p,b) = (4,0.1)$]{
			\begin{minipage}[t]{0.33\linewidth}
				\centering
				\includegraphics[width=\linewidth,height=0.18\textheight]{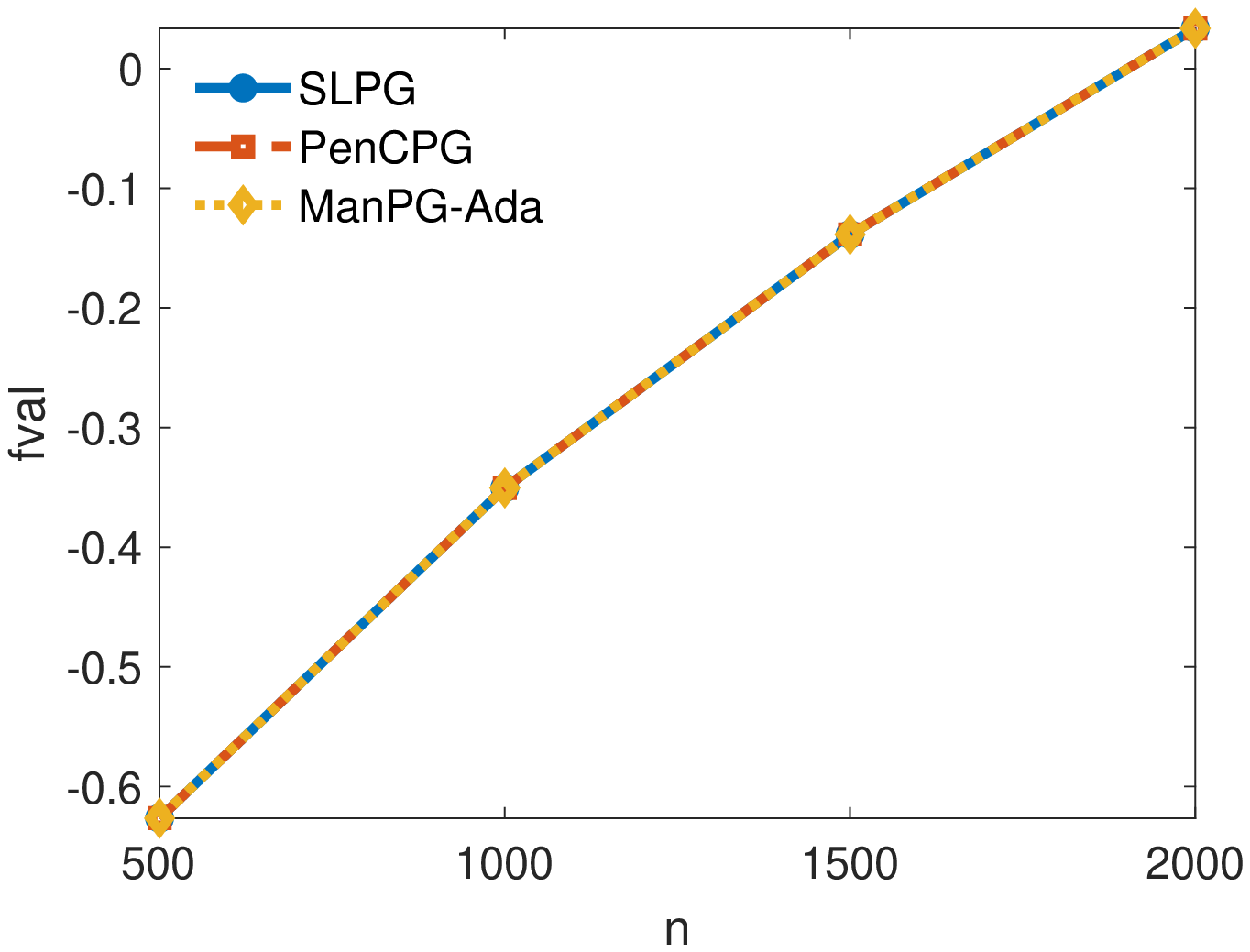}
				\label{Fig:l21PCA_Prob1_item1}
			\end{minipage}%
		}%
		\subfigure[$(p,b) = (4,0.1)$]{
			\begin{minipage}[t]{0.33\linewidth}
				\centering
				\includegraphics[width=\linewidth,height=0.18\textheight]{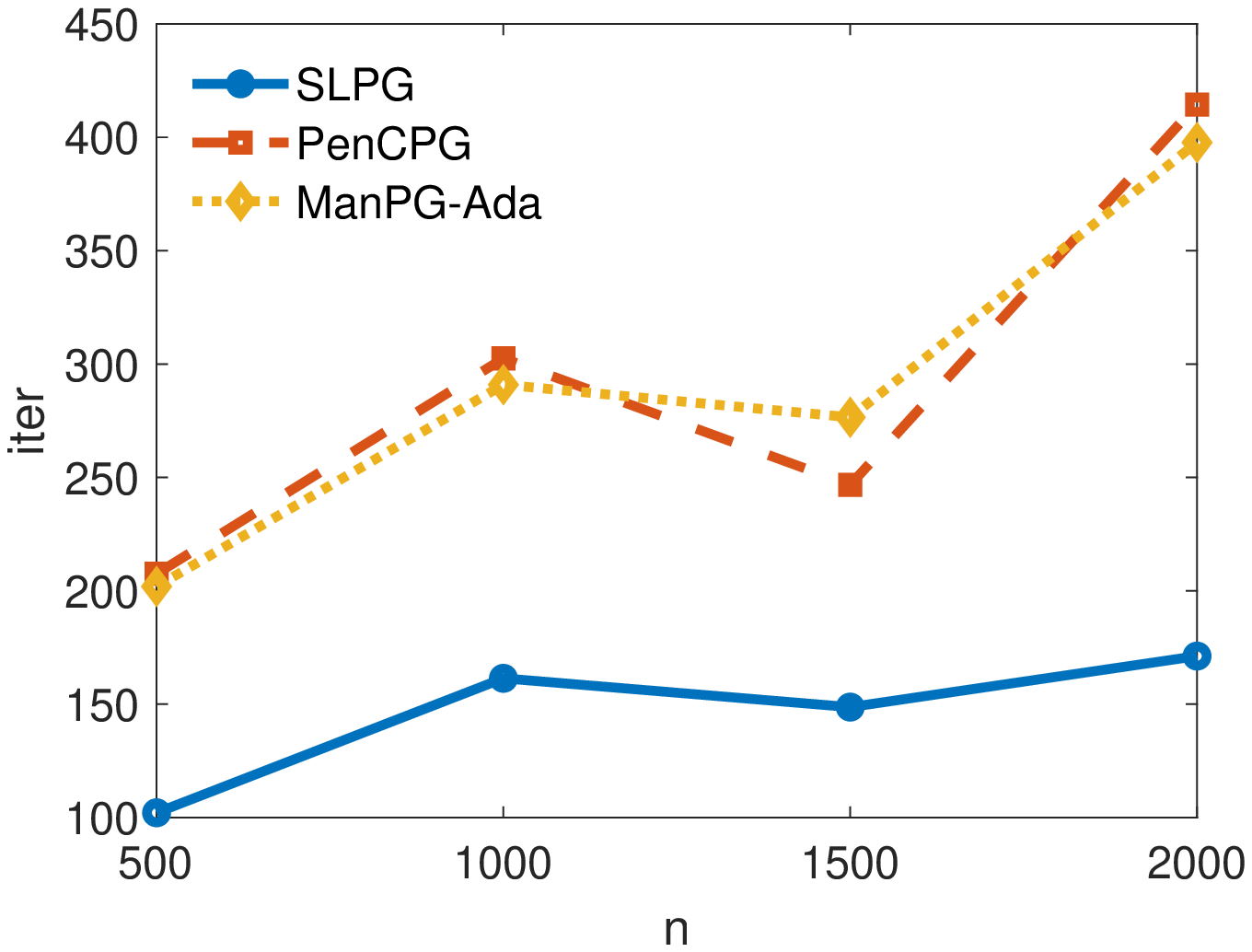}
				\label{Fig:l21PCA_Prob1_item2}
			\end{minipage}%
		}%
		\subfigure[$(p,b) = (4,0.1)$]{
			\begin{minipage}[t]{0.33\linewidth}
				\centering
				\includegraphics[width=\linewidth,height=0.18\textheight]{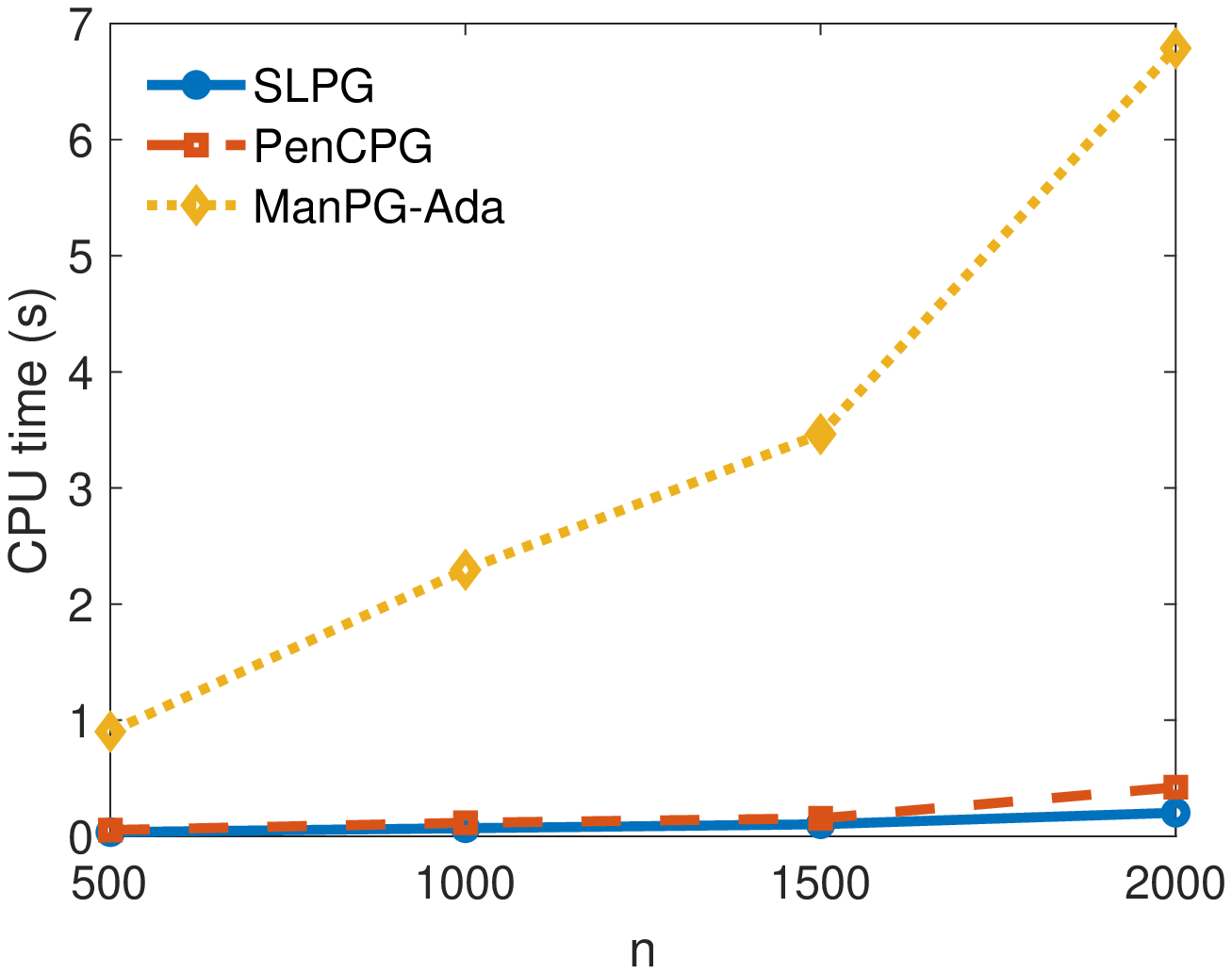}
				\label{Fig:l21PCA_Prob1_item5}
			\end{minipage}%
		}%
	
		\subfigure[$(n,b) = (1000,0.1)$]{
			\begin{minipage}[t]{0.33\linewidth}
				\centering
				\includegraphics[width=\linewidth,height=0.18\textheight]{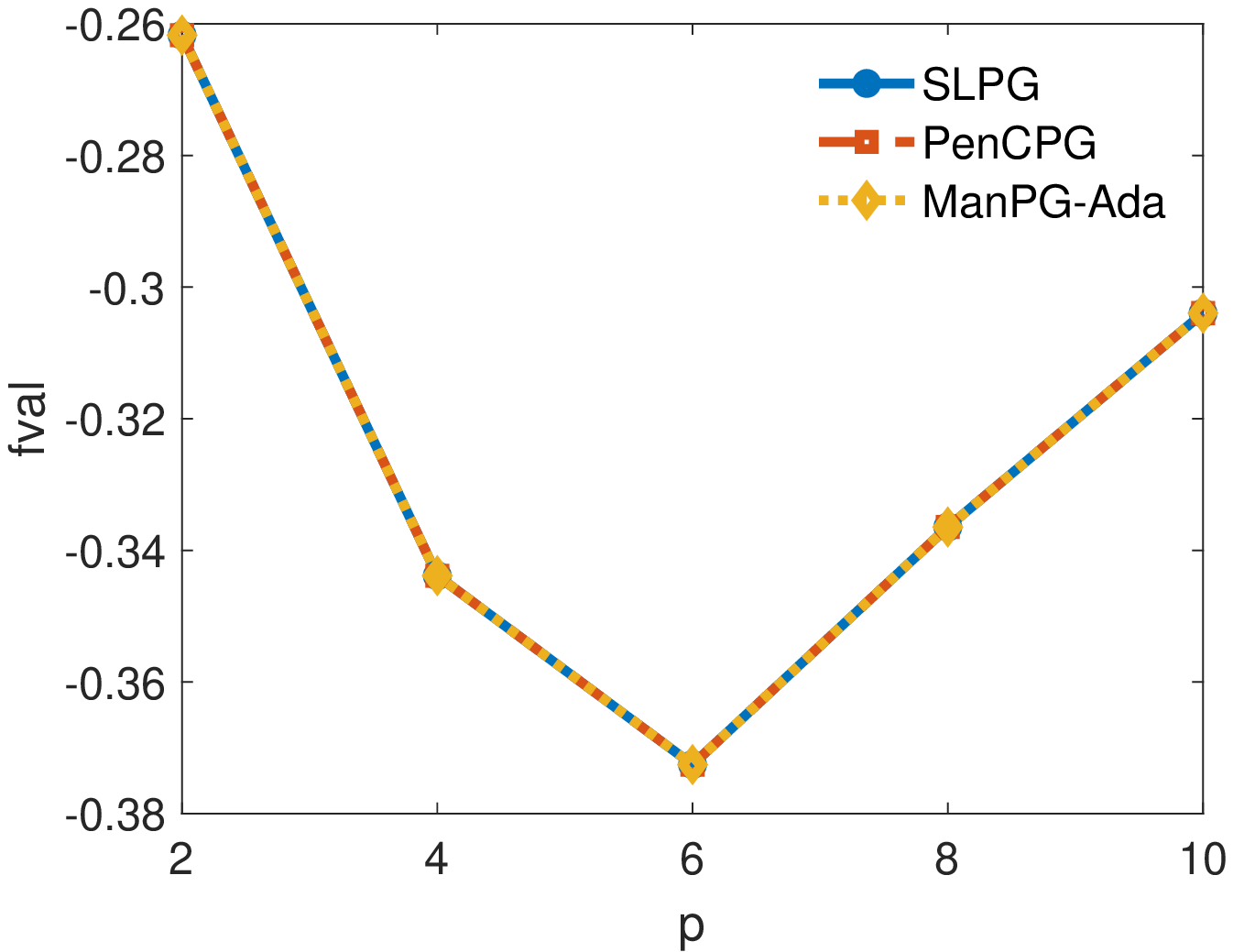}
				\label{Fig:l21PCA_Prob2_item1}
			\end{minipage}%
		}%
		\subfigure[$(n,b) = (1000,0.1)$]{
			\begin{minipage}[t]{0.33\linewidth}
				\centering
				\includegraphics[width=\linewidth,height=0.18\textheight]{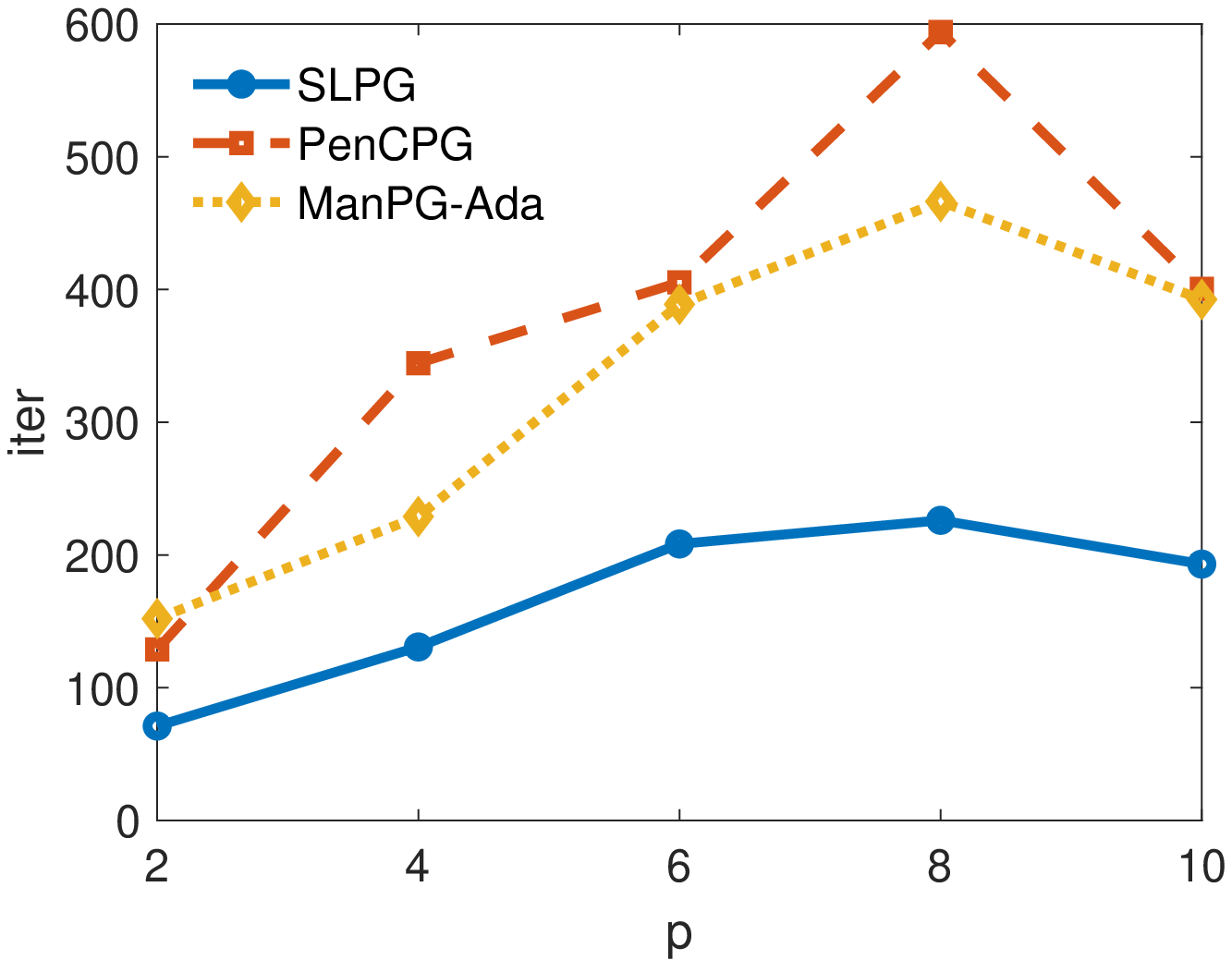}
				\label{Fig:l21PCA_Prob2_item2}
			\end{minipage}%
		}%
		\subfigure[$(n,b) = (1000,0.1)$]{
			\begin{minipage}[t]{0.33\linewidth}
				\centering
				\includegraphics[width=\linewidth,height=0.18\textheight]{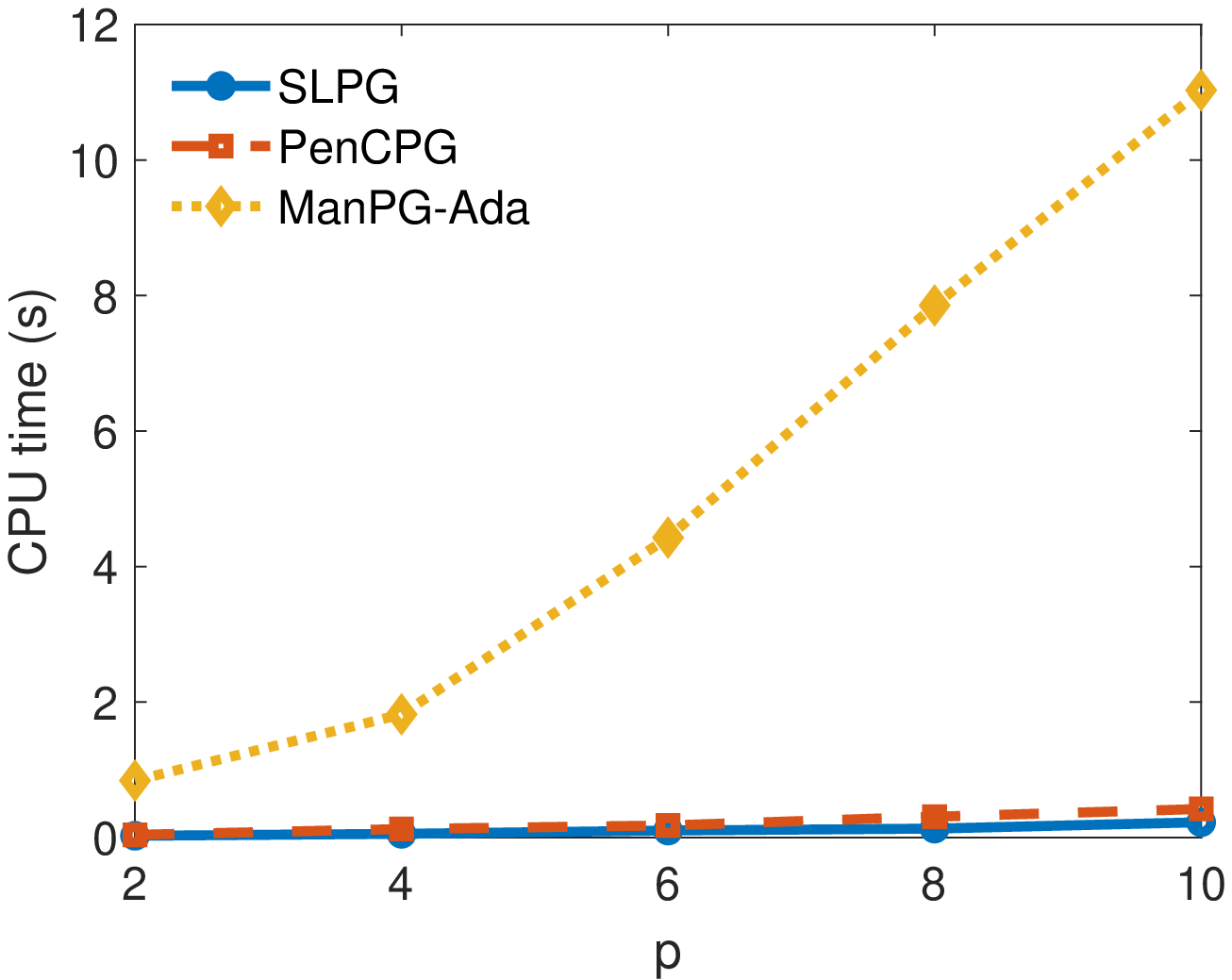}
				\label{Fig:l21PCA_Prob2_item5}
			\end{minipage}%
		}%
	
		\subfigure[$(n,p) = (1000,4)$]{
			\begin{minipage}[t]{0.33\linewidth}
				\centering
				\includegraphics[width=\linewidth,height=0.18\textheight]{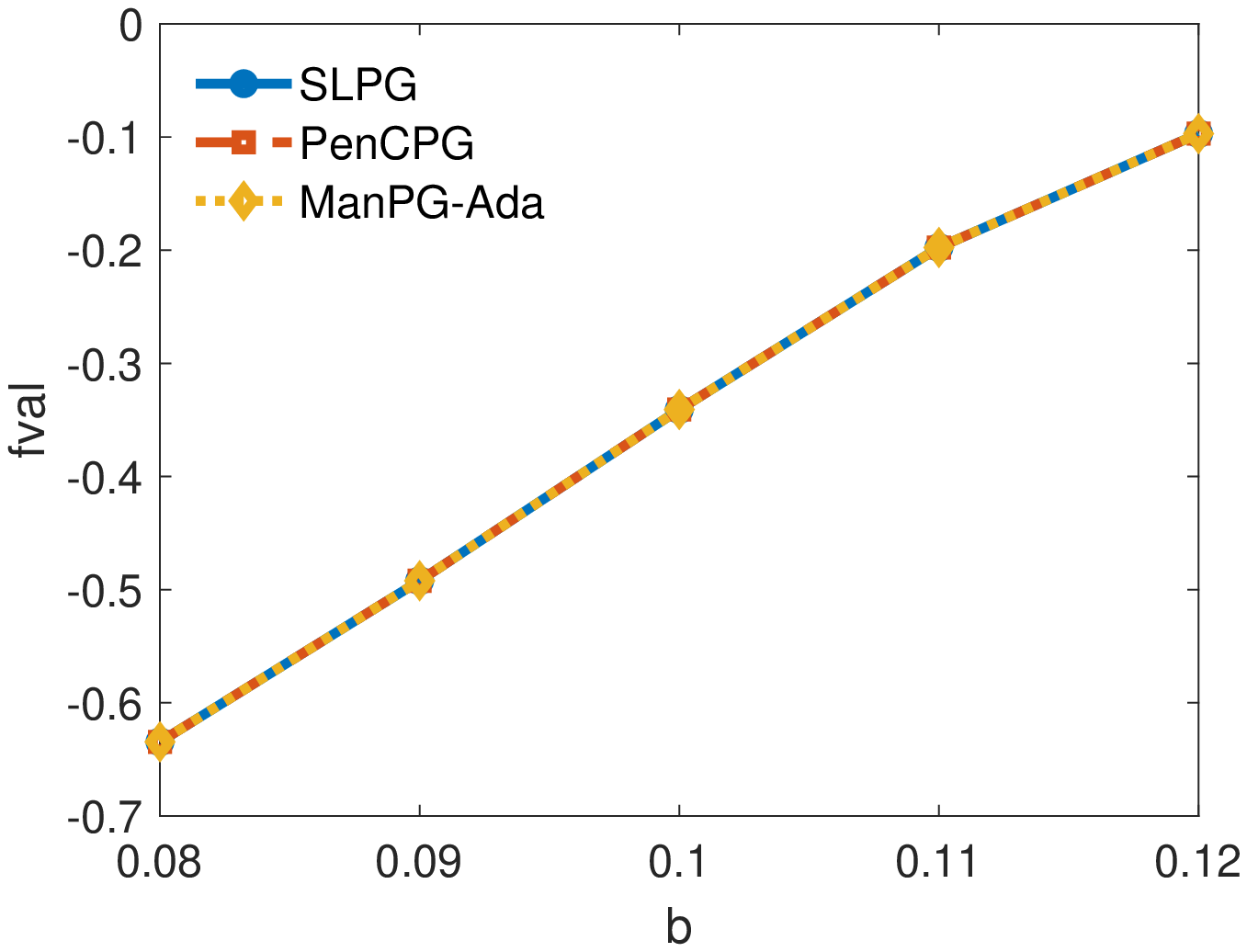}
				\label{Fig:l21PCA_Prob3_item1}
			\end{minipage}
		}%
		\subfigure[$(n,p) = (1000,4)$]{
			\begin{minipage}[t]{0.33\linewidth}
				\centering
				\includegraphics[width=\linewidth,height=0.18\textheight]{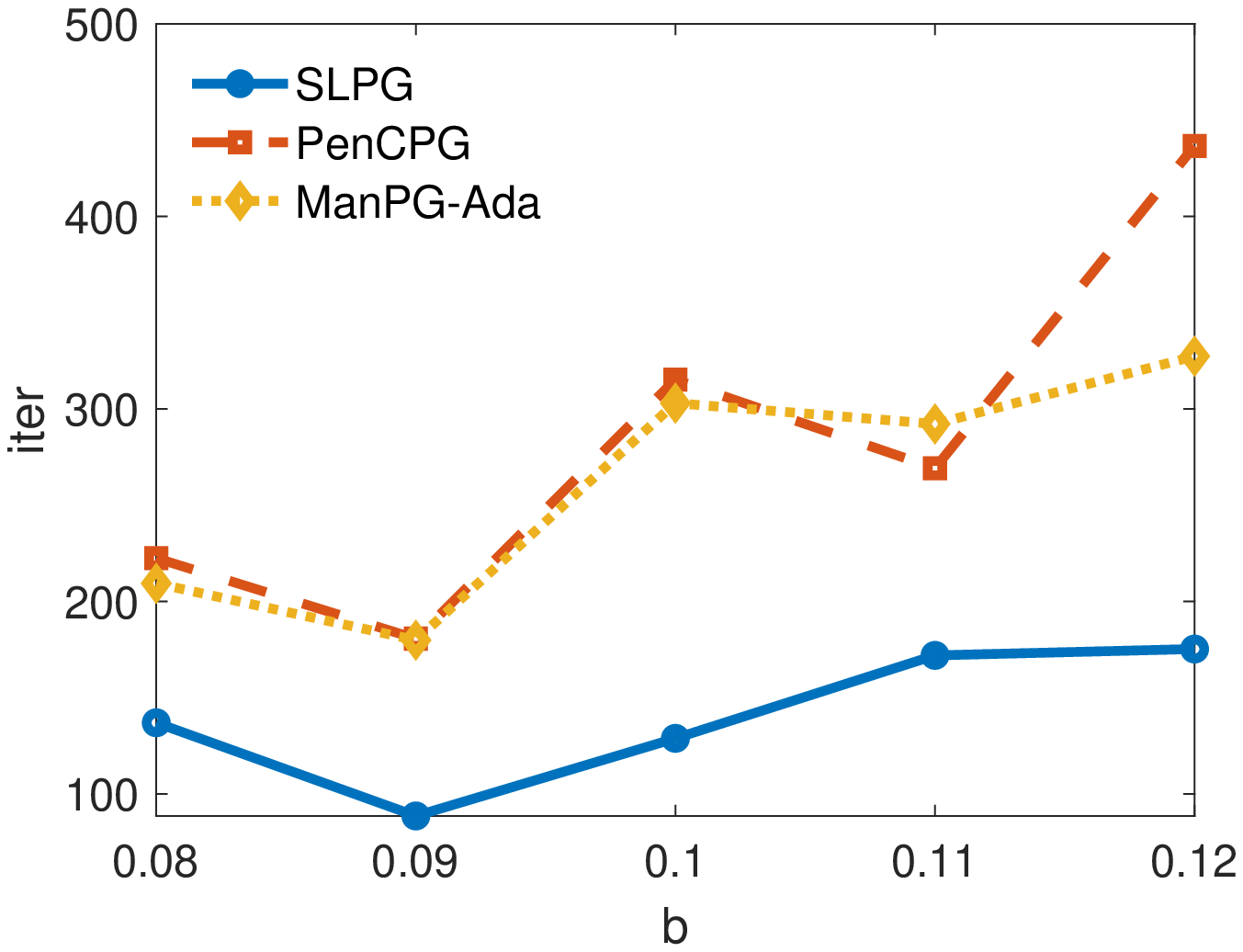}
				\label{Fig:l21PCA_Prob3_item2}
			\end{minipage}
		}%
		\subfigure[$(n,p) = (1000,4)$]{
			\begin{minipage}[t]{0.33\linewidth}
				\centering
				\includegraphics[width=\linewidth,height=0.18\textheight]{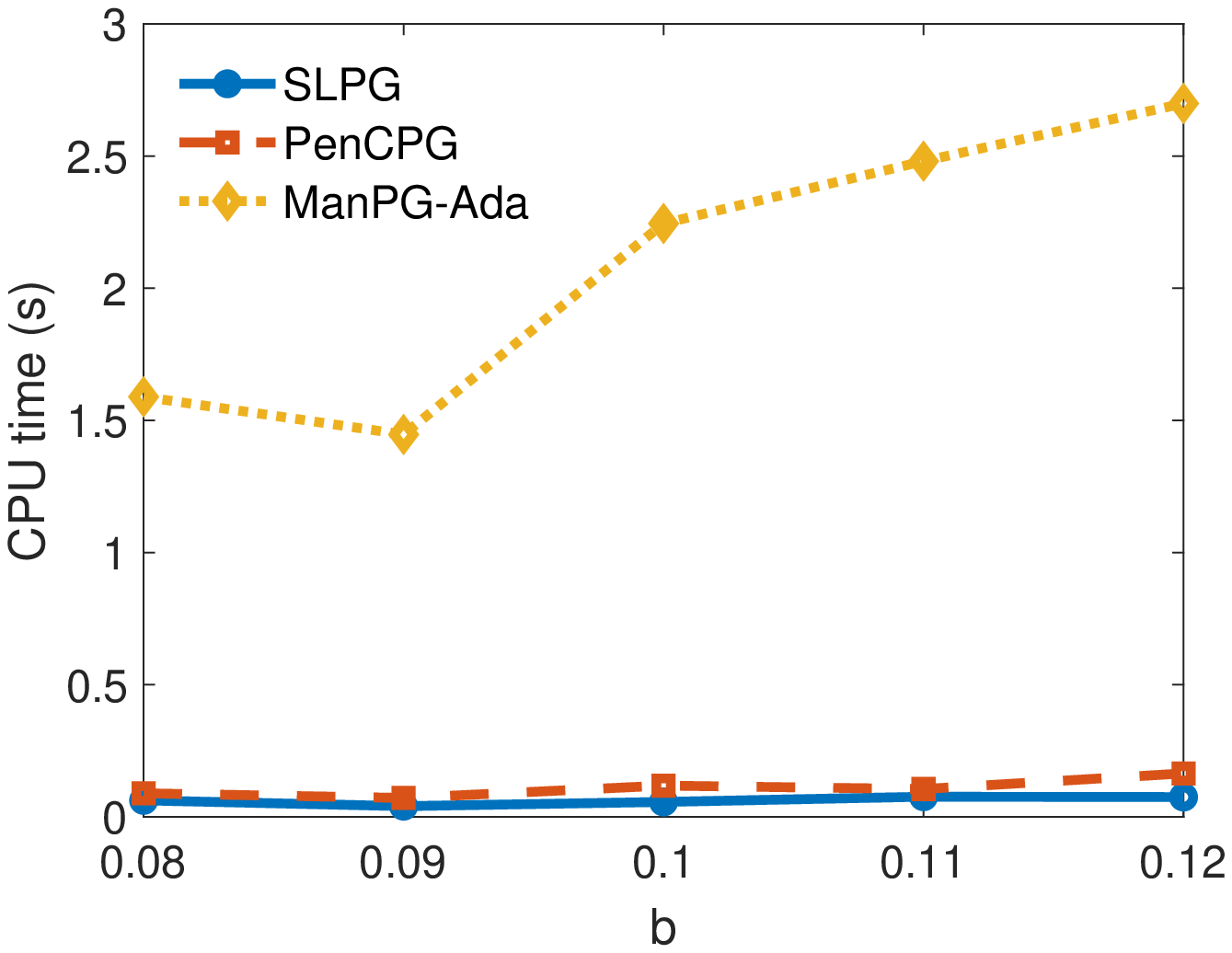}
				\label{Fig:l21PCA_Prob3_item5}
			\end{minipage}
		}%

		\caption{A comparison among \SLPG, PenCPG and ManPG-Ada in solving $\ell_{2,1}$-norm regularized PCA problems.}
		\label{Fig_l21PCA_Coefficients}
	\end{figure}

	From the above experiment, we notice that PenCPG is comparable with \SLPGs
	in the aspect of CPU time.
	However, we notice that PenCPG requires to tune a penalty parameter  $\beta$
	while \SLPGs does not have one.
	In the following experiment, we compare  \SLPGs with PenCPG equipped with 
	different choices of $\beta$. We still use Problem \ref{Example_l21PCA}
	with data set generated randomly as stated in \eqref{eq:co}.
	We present the results in Figure \ref{Fig_penalty_parameter}.
	The detailed problem settings are listed below the subfigures.
	We can learn for Figure \ref{Fig_penalty_parameter} that 
	the performance of PenCPG is sensitive to the penalty parameter,
	meanwhile, \SLPGs can always outperforms PenCPG with the best choice of 
	$\beta$.

	\begin{figure}[!htbp]
		\centering
		\subfigure[$(p, b) = (2, 0.1)$]{
			\begin{minipage}[t]{0.33\linewidth}
				\centering
				\includegraphics[width=\linewidth,height=0.18\textheight]{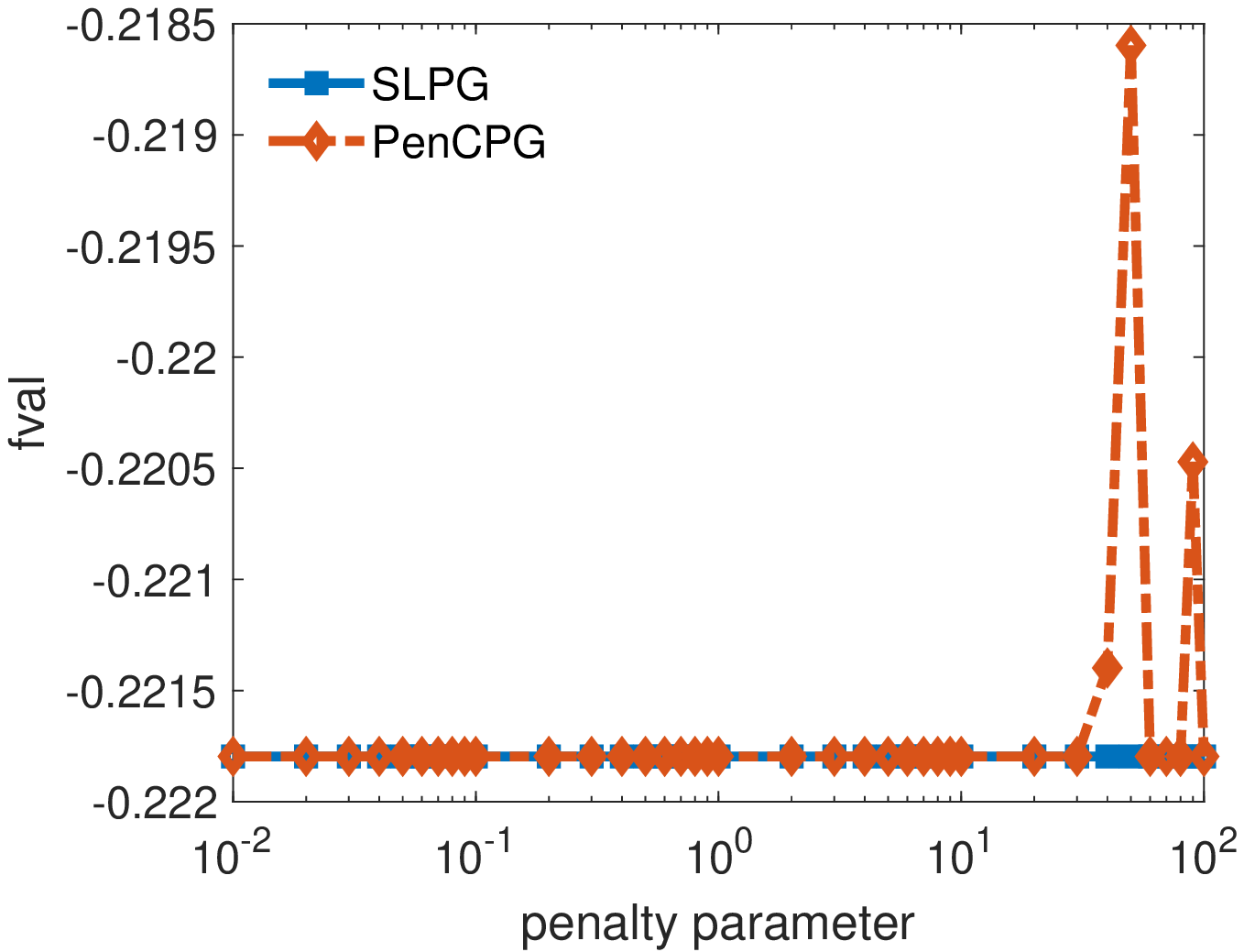}
				\label{Fig:21SPCA_Prob1_item1}
			\end{minipage}%
		}%
		\subfigure[$$(p, b) = (2, 0.1)$$]{
			\begin{minipage}[t]{0.33\linewidth}
				\centering
				\includegraphics[width=\linewidth,height=0.18\textheight]{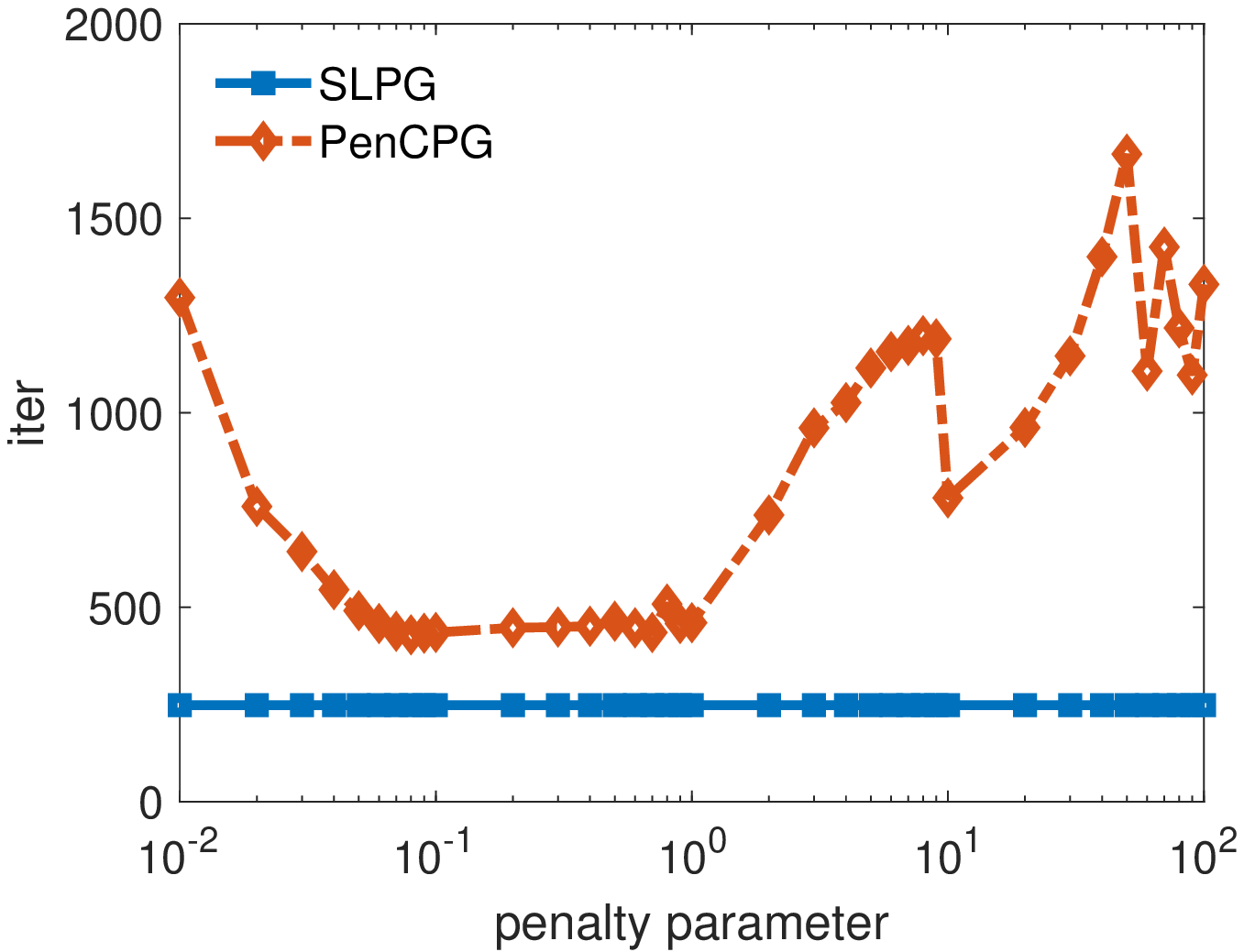}
				\label{Fig:21SPCA_Prob1_item2}
			\end{minipage}%
		}%
		\subfigure[$$(p, b) = (2, 0.1)$$]{
			\begin{minipage}[t]{0.33\linewidth}
				\centering
				\includegraphics[width=\linewidth,height=0.18\textheight]{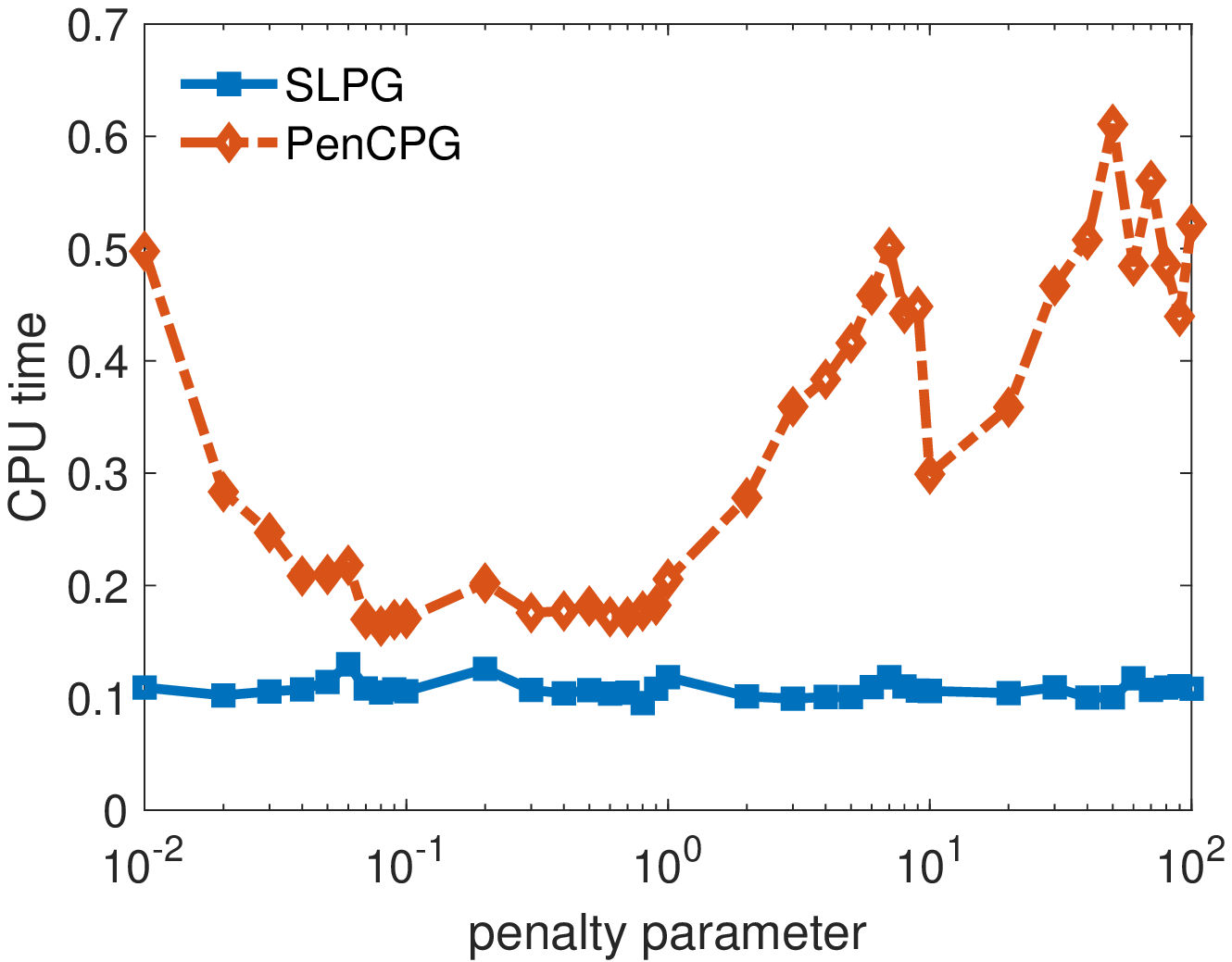}
				\label{Fig:21SPCA_Prob1_item5}
			\end{minipage}%
		}%
	
		\subfigure[$(p, b) = (5, 0.1)$]{
			\begin{minipage}[t]{0.33\linewidth}
				\centering
				\includegraphics[width=\linewidth,height=0.18\textheight]{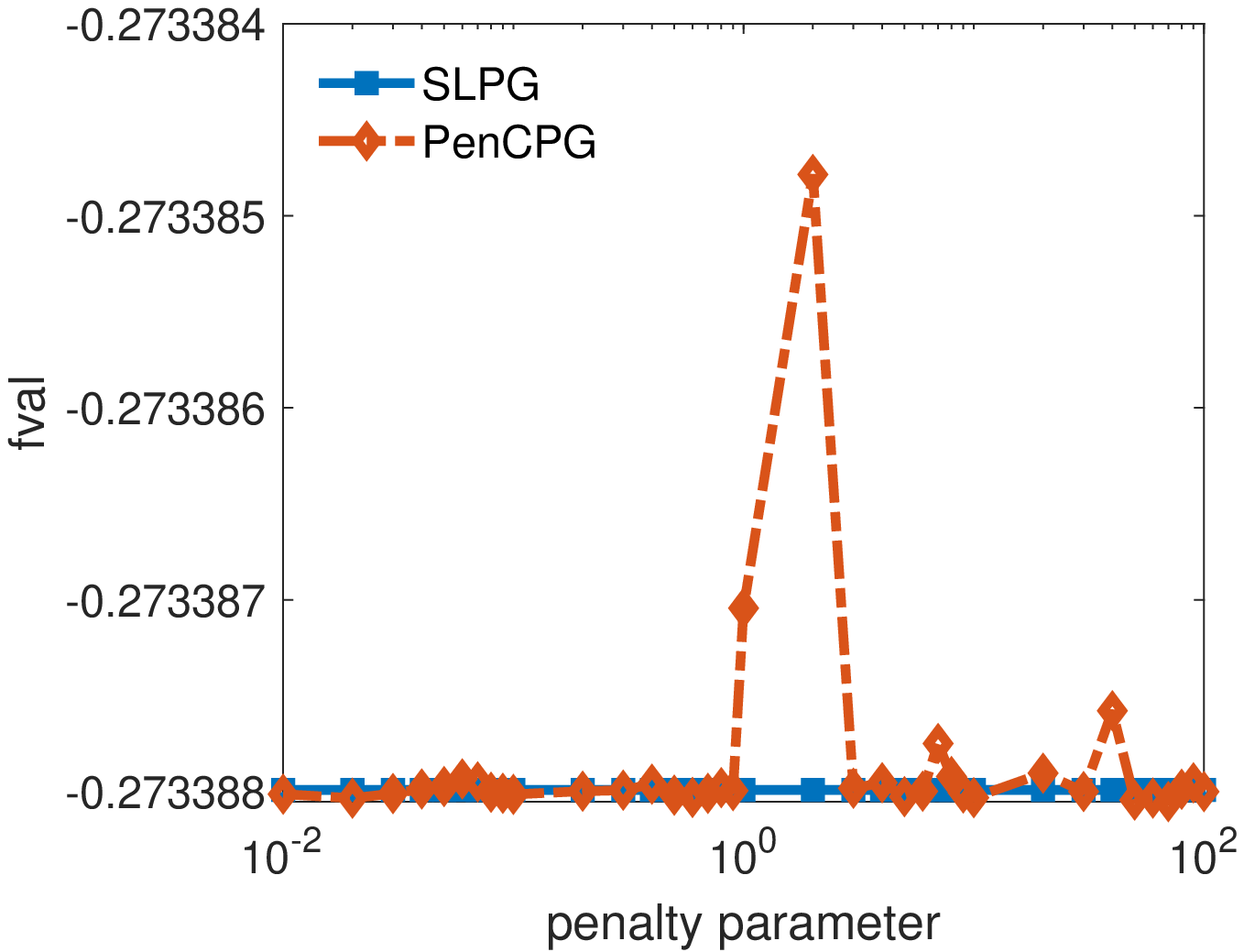}
				\label{Fig:21SPCA_Prob2_item1}
			\end{minipage}%
		}%
		\subfigure[$$(p, b) = (5, 0.1)$$]{
			\begin{minipage}[t]{0.33\linewidth}
				\centering
				\includegraphics[width=\linewidth,height=0.18\textheight]{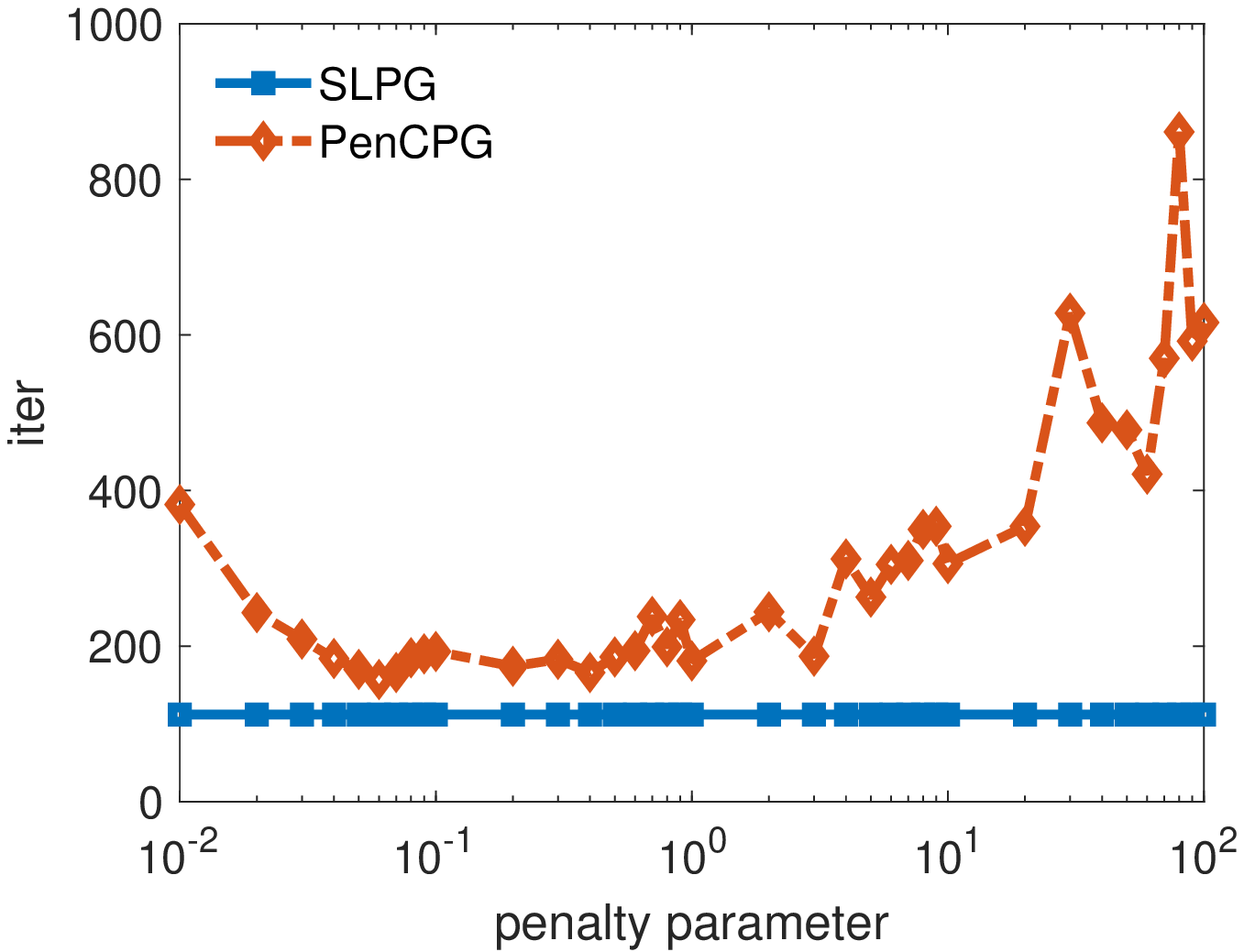}
				\label{Fig:21SPCA_Prob2_item2}
			\end{minipage}%
		}%
		\subfigure[$$(p, b) = (5, 0.1)$$]{
			\begin{minipage}[t]{0.33\linewidth}
				\centering
				\includegraphics[width=\linewidth,height=0.18\textheight]{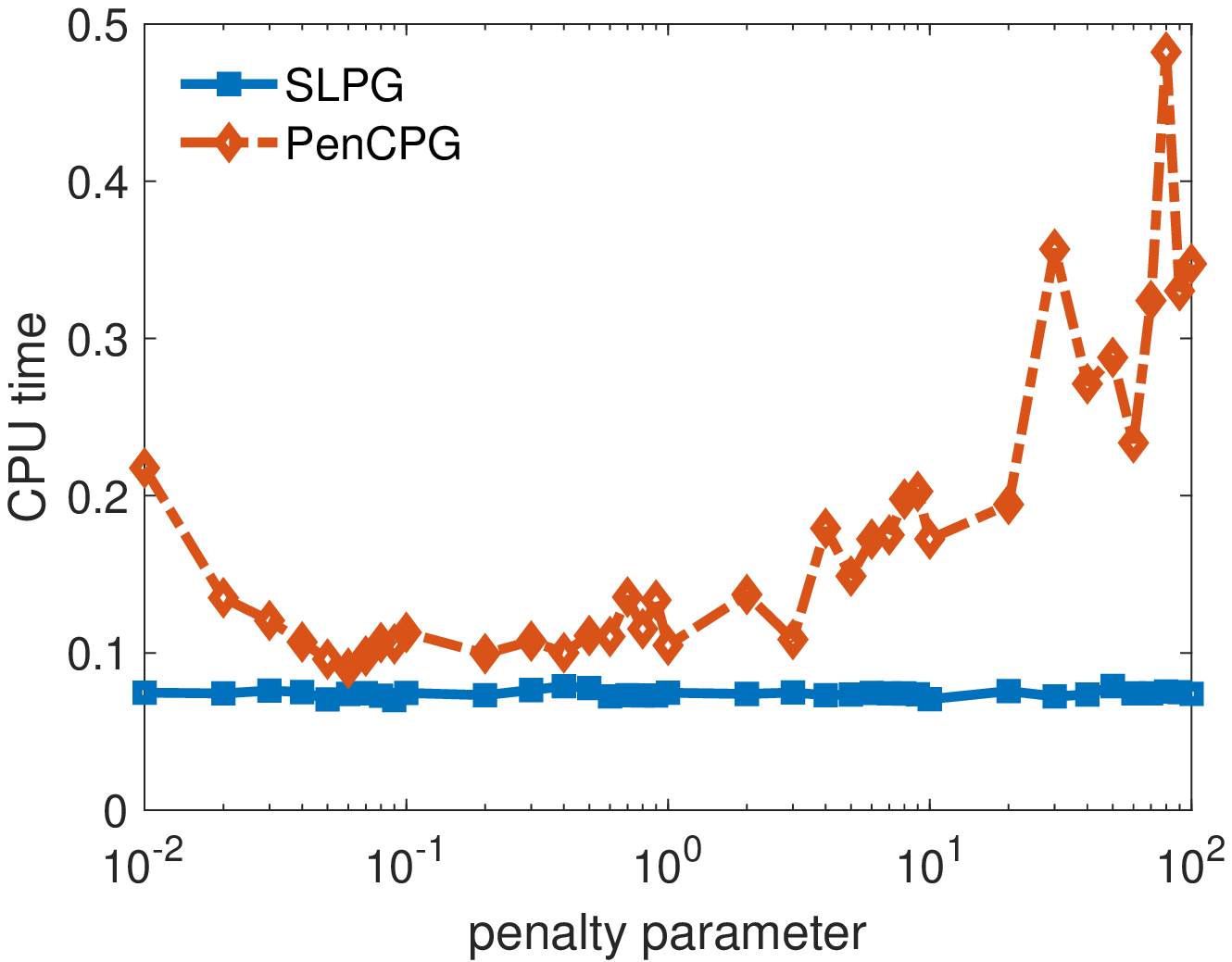}
				\label{Fig:21SPCA_Prob2_item5}
			\end{minipage}%
		}%
	
		\subfigure[$$(p, b) = (10, 0.1)$$]{
			\begin{minipage}[t]{0.33\linewidth}
				\centering
				\includegraphics[width=\linewidth,height=0.18\textheight]{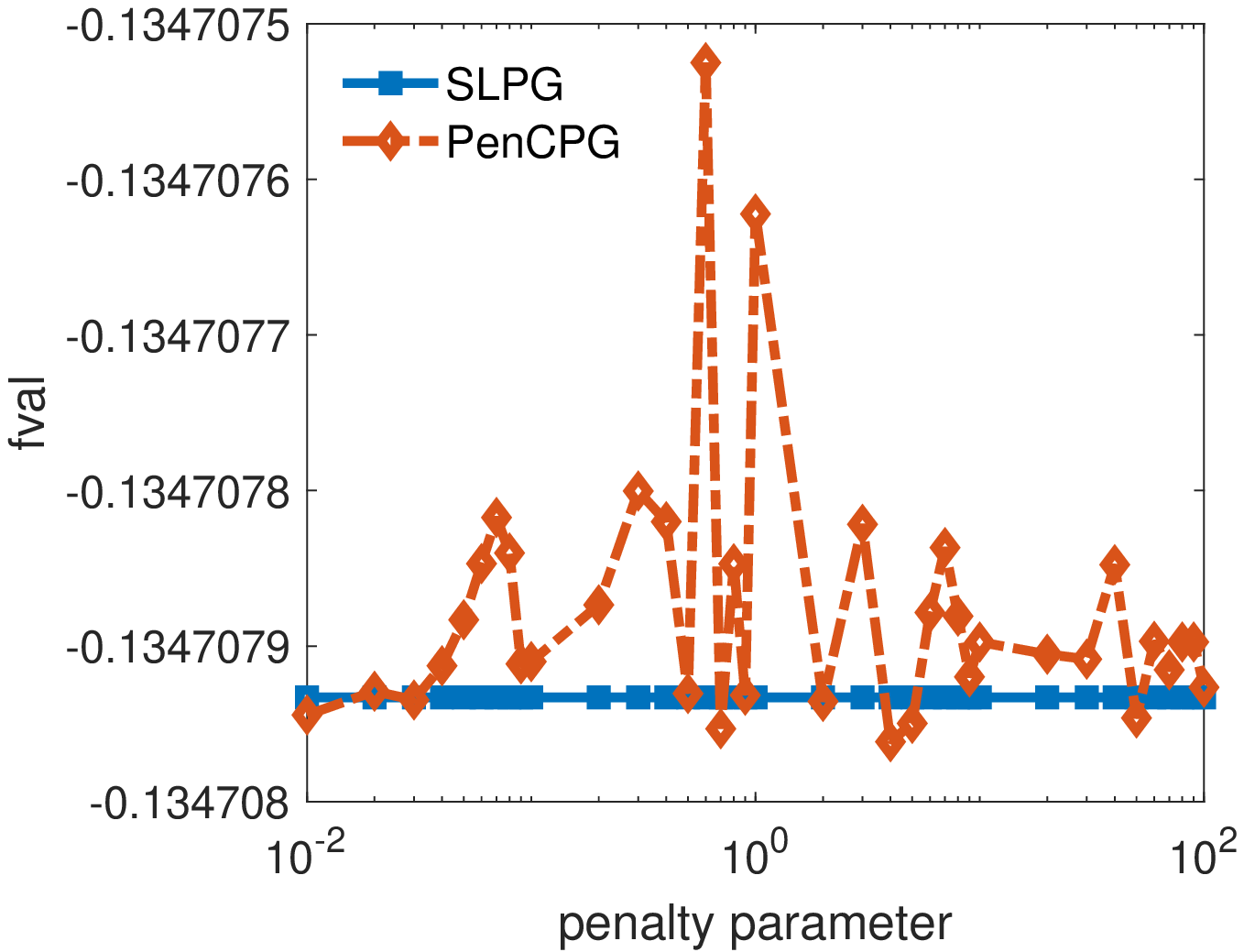}
				\label{Fig:21SPCA_Prob3_item1}
			\end{minipage}
		}%
		\subfigure[$$(p, b) = (10, 0.1)$$]{
			\begin{minipage}[t]{0.33\linewidth}
				\centering
				\includegraphics[width=\linewidth,height=0.18\textheight]{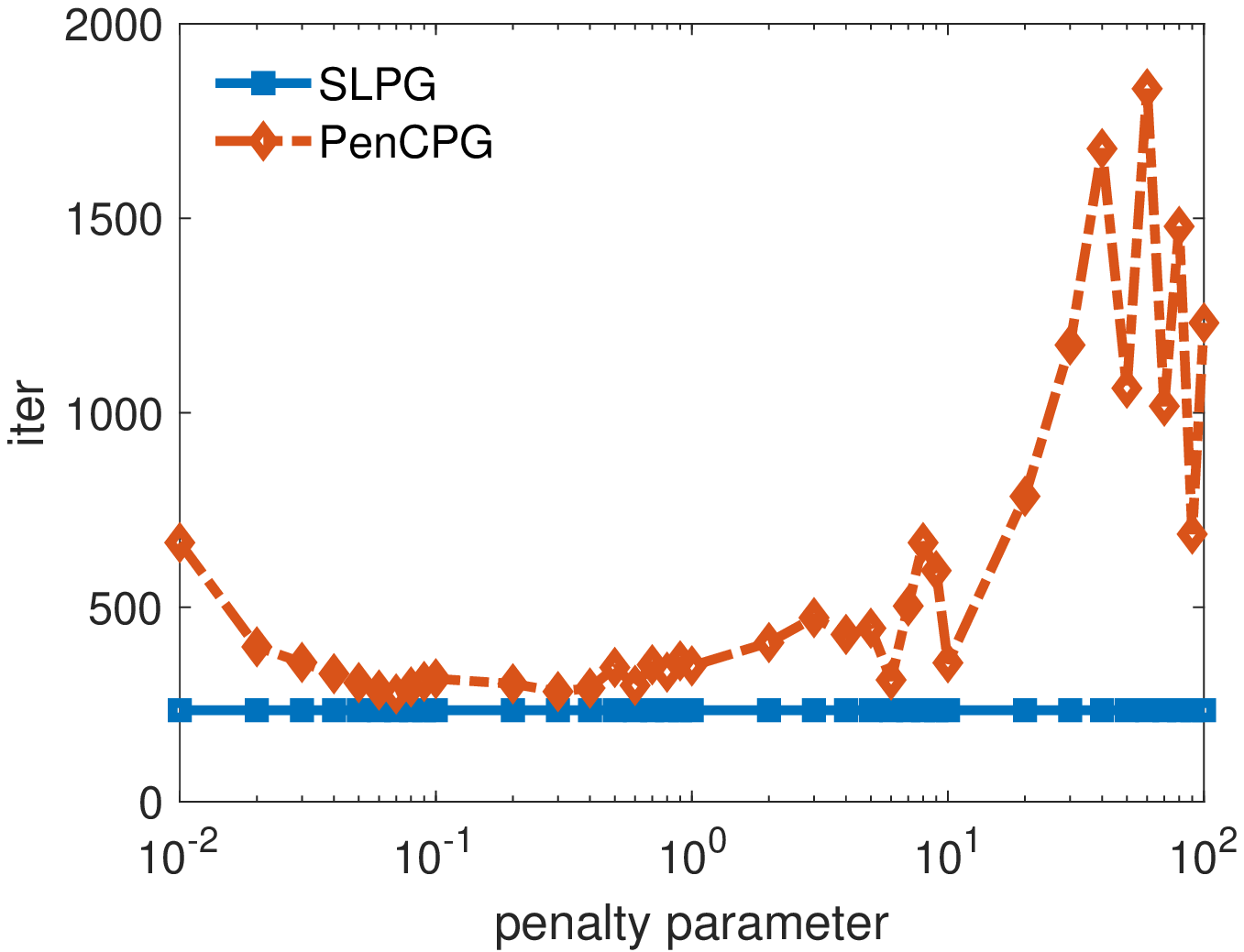}
				\label{Fig:21SPCA_Prob3_item2}
			\end{minipage}
		}%
		\subfigure[$$(p, b) = (10, 0.1)$$]{
			\begin{minipage}[t]{0.33\linewidth}
				\centering
				\includegraphics[width=\linewidth,height=0.18\textheight]{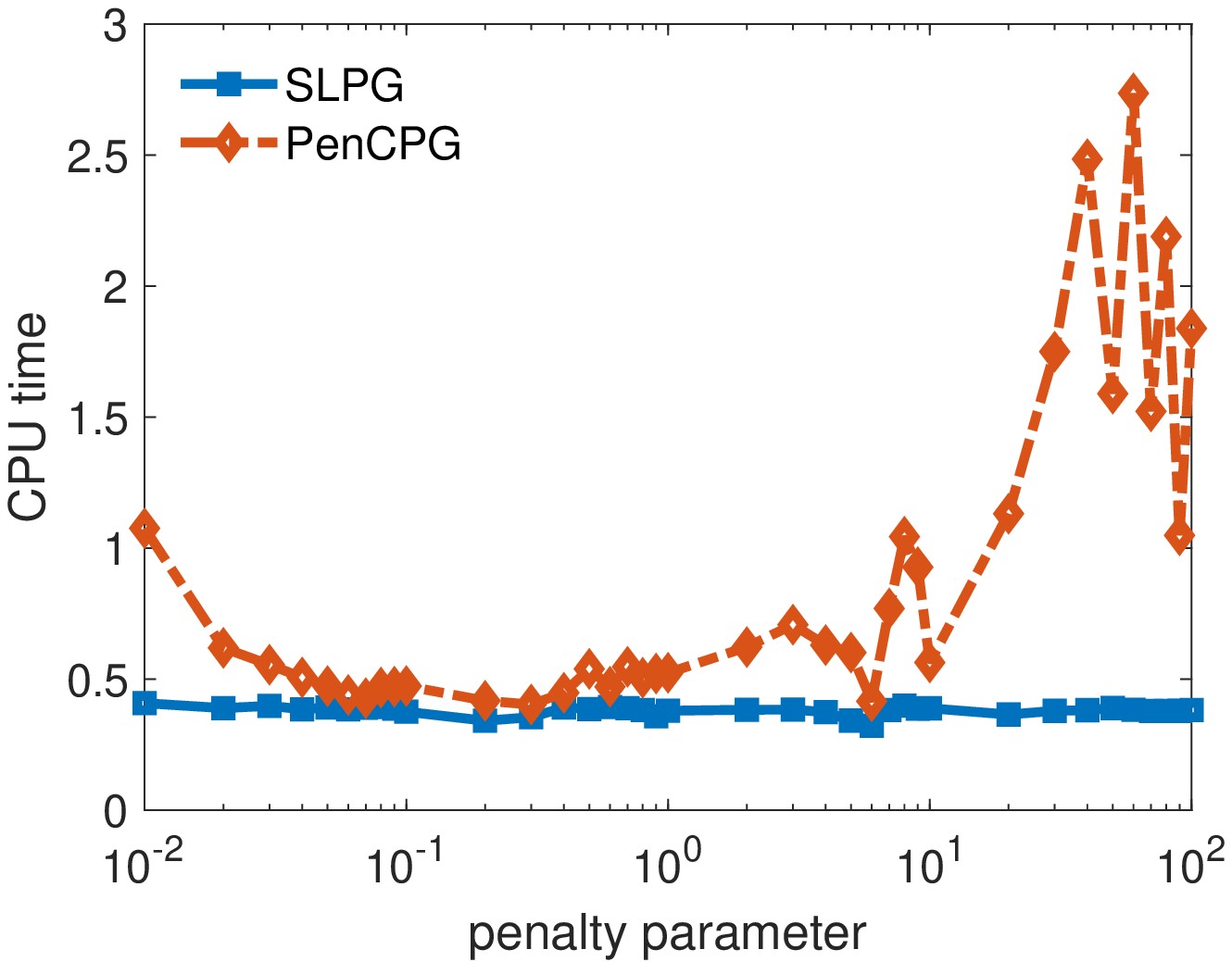}
				\label{Fig:21SPCA_Prob3_item5}
			\end{minipage}
		}%
		
		\caption{A comparison between SLPG and PenCPG with different penalty parameters.}
		\label{Fig_penalty_parameter}
	\end{figure}

	\subsection{Sparse PCA}
	
	In this subsection, we compare \SLPGs with two state-of-the-art algorithms  including ManPG-Ada \cite{chen2018proximal} and AManPG \cite{huang2019extending}
	in solving sparse PCA problem. 
	In our experiments, all the three algorithms are run in their default settings. 
	Figure \ref{Fig_l21PCA_Coefficients} illustrates the performance of the  
	three algorithms in comparison in solving Problem \ref{Example_SPCA}
	with different combinations of $n$, $p$, $\gamma$. The detailed problem
	parameters are listed below the subfigures.
	We can learn from Figure \ref{Fig_l21PCA_Coefficients}
	that
	all of these three algorithms reach the same function values.
	\SLPGs takes much fewer iterations than ManPG-Ada and
	slightly fewer iterations than AManPG. Meanwhile, it takes much less CPU
	time than the other two algorithms.

	\begin{figure}[!htbp]
		\centering
		\subfigure[$(p,\gamma) = (10,0.007)$]{
			\begin{minipage}[t]{0.33\linewidth}
				\centering
				\includegraphics[width=\linewidth,height=0.18\textheight]{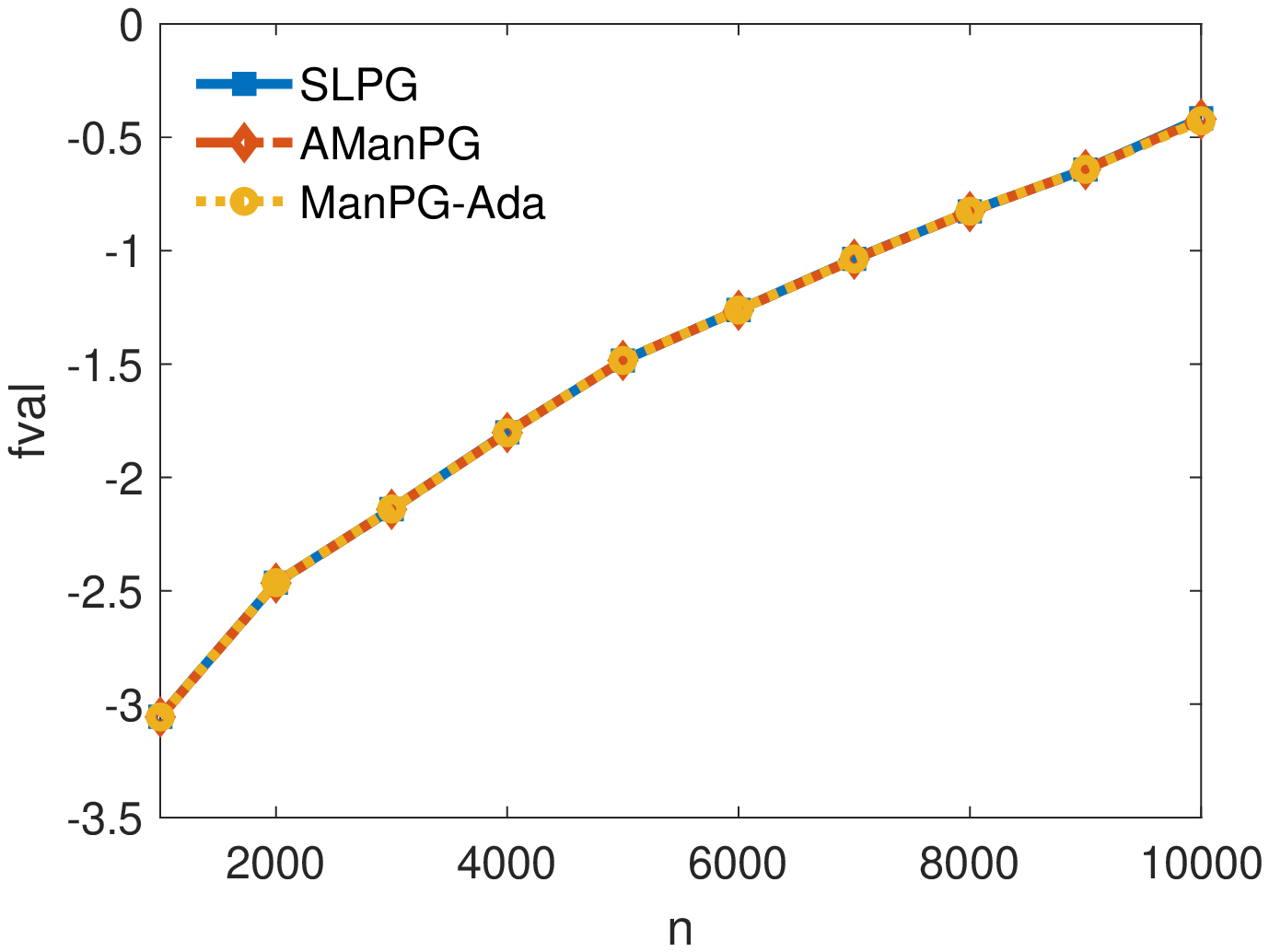}
				\label{Fig:SPCA_Prob1_item1}
			\end{minipage}%
		}%
		\subfigure[$(p,\gamma) = (10,0.007)$]{
			\begin{minipage}[t]{0.33\linewidth}
				\centering
				\includegraphics[width=\linewidth,height=0.18\textheight]{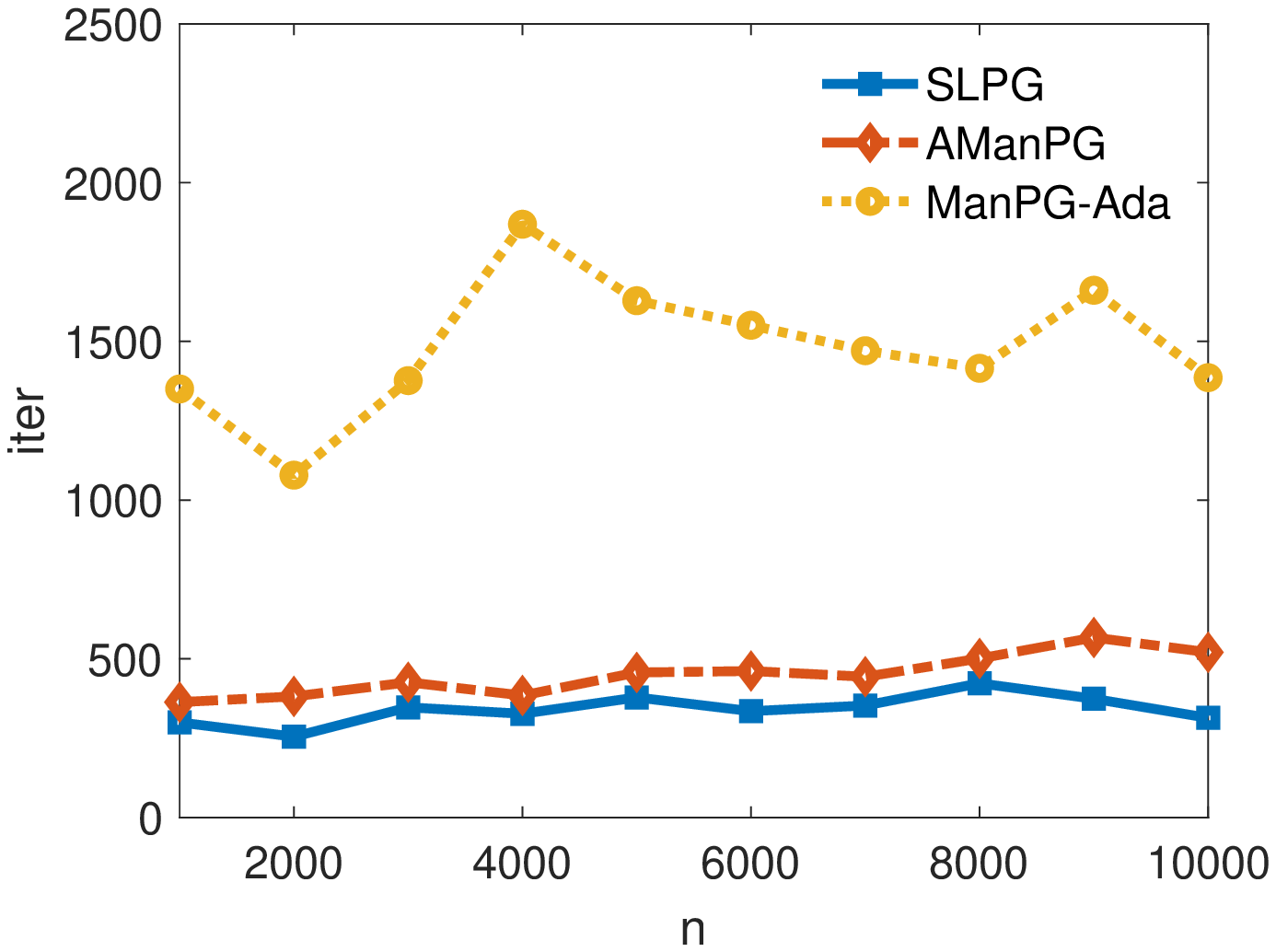}
				\label{Fig:SPCA_Prob1_item2}
			\end{minipage}%
		}%
		\subfigure[$(p,\gamma) = (10,0.007)$]{
			\begin{minipage}[t]{0.33\linewidth}
				\centering
				\includegraphics[width=\linewidth,height=0.18\textheight]{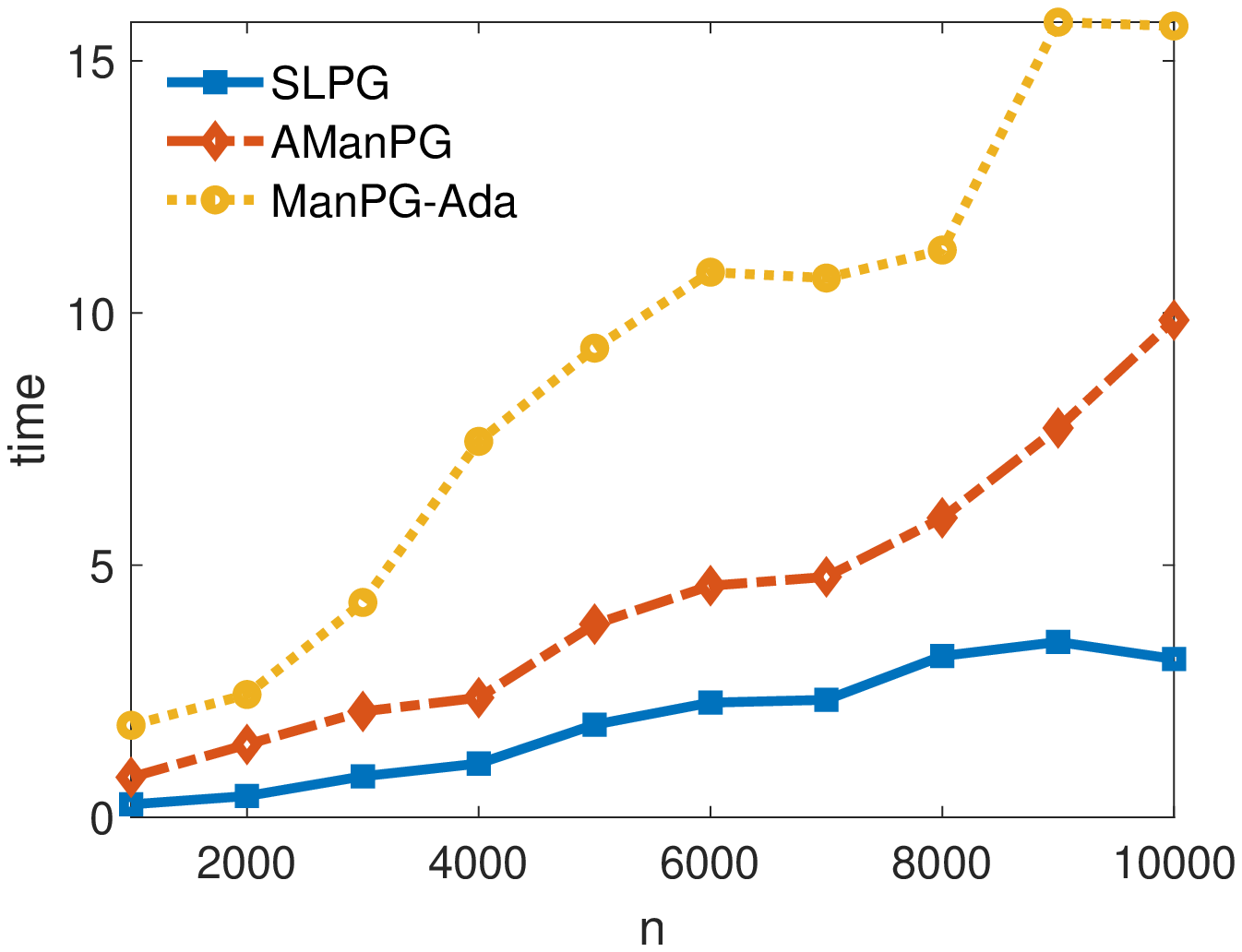}
				\label{Fig:SPCA_Prob1_item5}
			\end{minipage}%
		}%
	
		\subfigure[$(n,\gamma) = (5000,0.007)$]{
			\begin{minipage}[t]{0.33\linewidth}
				\centering
				\includegraphics[width=\linewidth,height=0.18\textheight]{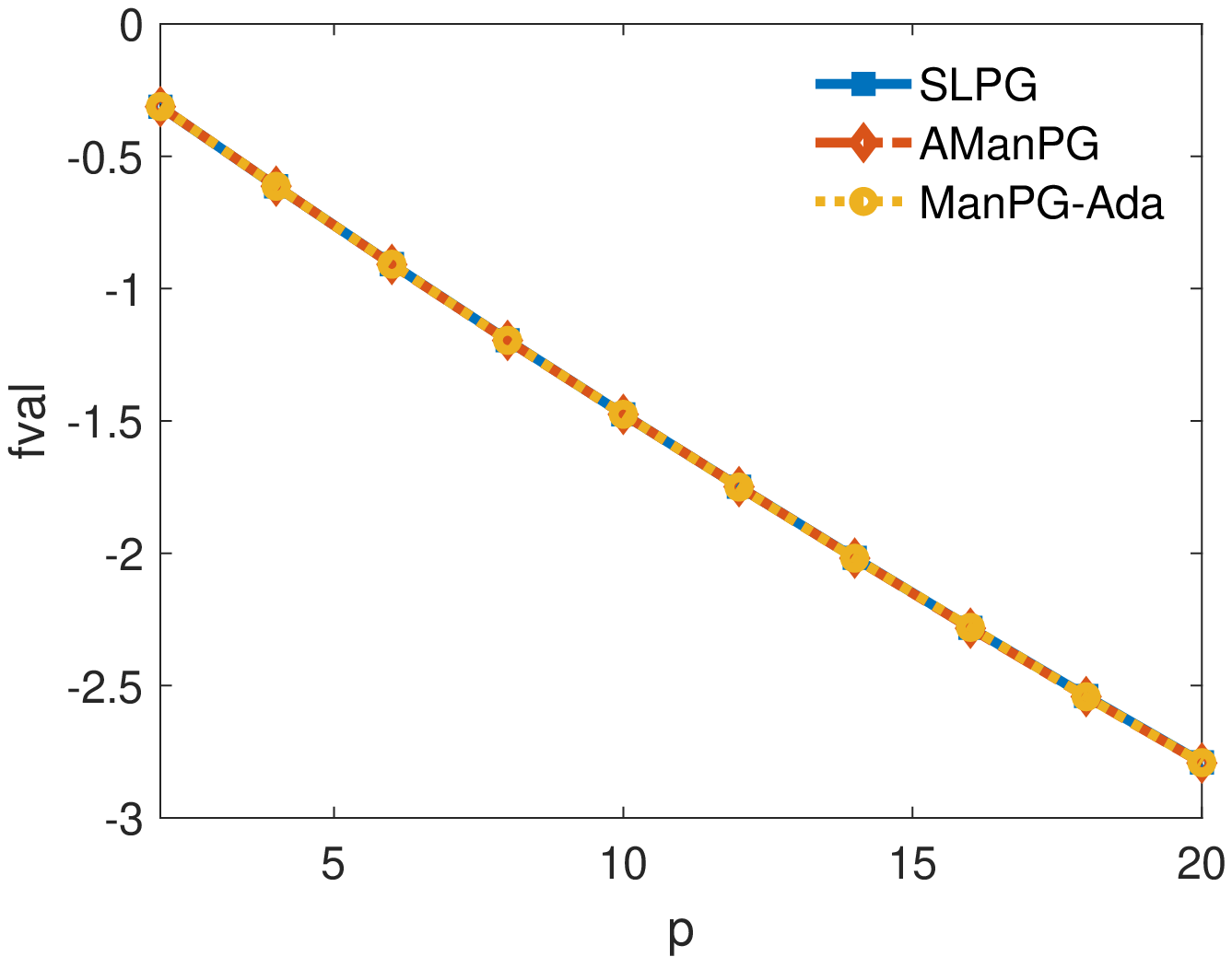}
				\label{Fig:SPCA_Prob2_item1}
			\end{minipage}%
		}%
		\subfigure[$(n,\gamma) = (5000,0.007)$]{
			\begin{minipage}[t]{0.33\linewidth}
				\centering
				\includegraphics[width=\linewidth,height=0.18\textheight]{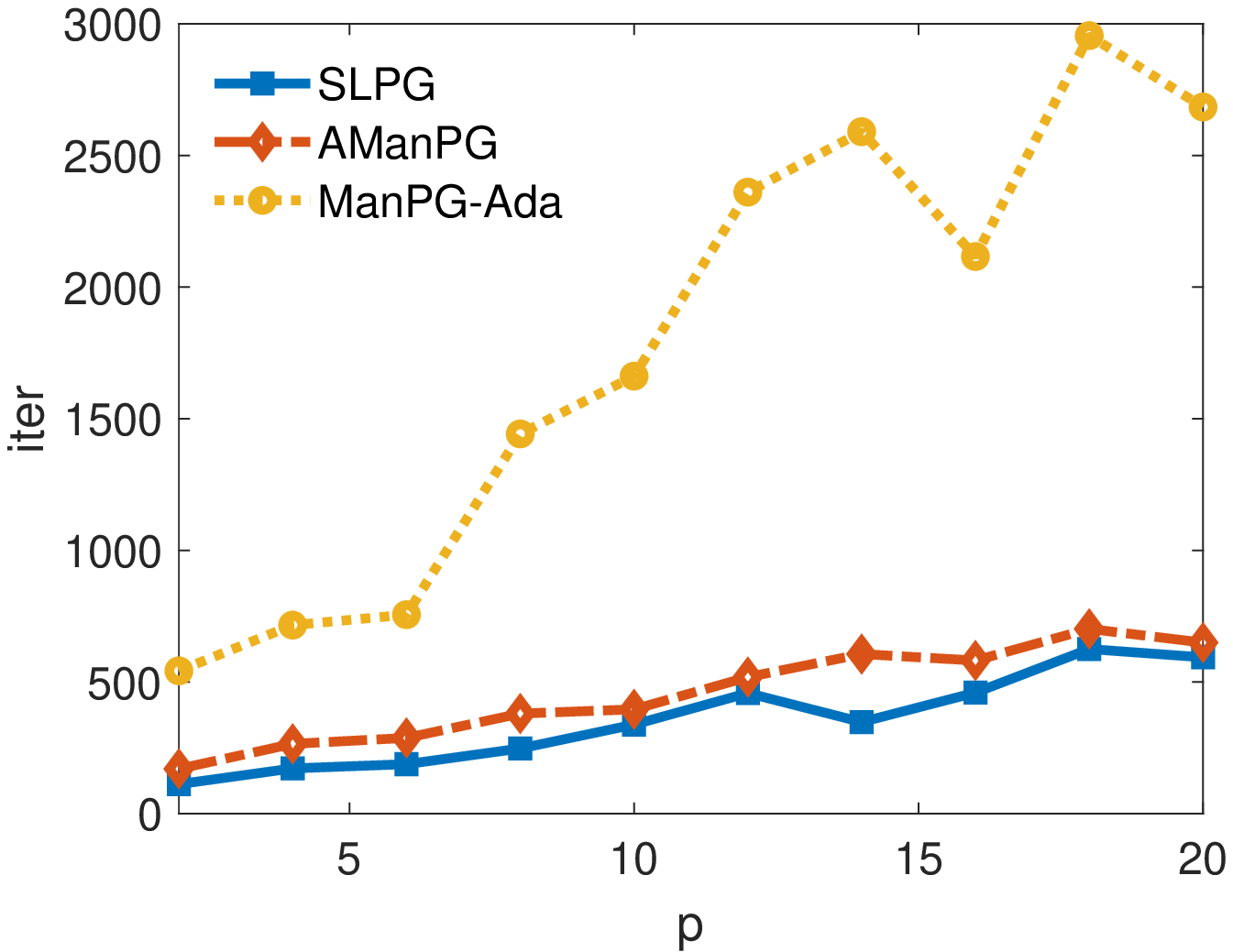}
				\label{Fig:SPCA_Prob2_item2}
			\end{minipage}%
		}%
		\subfigure[$(n,\gamma) = (5000,0.007)$]{
			\begin{minipage}[t]{0.33\linewidth}
				\centering
				\includegraphics[width=\linewidth,height=0.18\textheight]{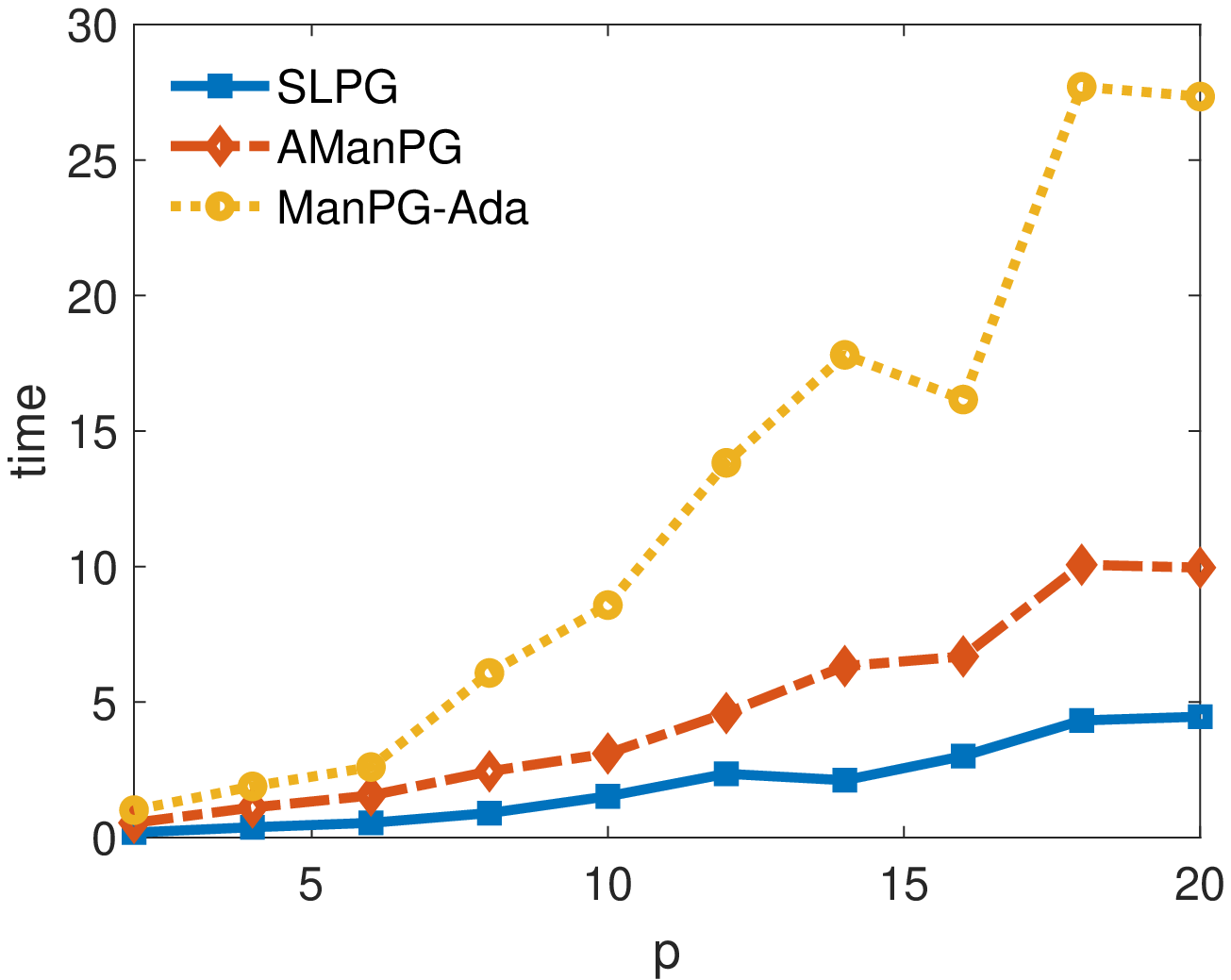}
				\label{Fig:SPCA_Prob2_item5}
			\end{minipage}%
		}%
	
		\subfigure[$(n,p) = (5000,10)$]{
			\begin{minipage}[t]{0.33\linewidth}
				\centering
				\includegraphics[width=\linewidth,height=0.18\textheight]{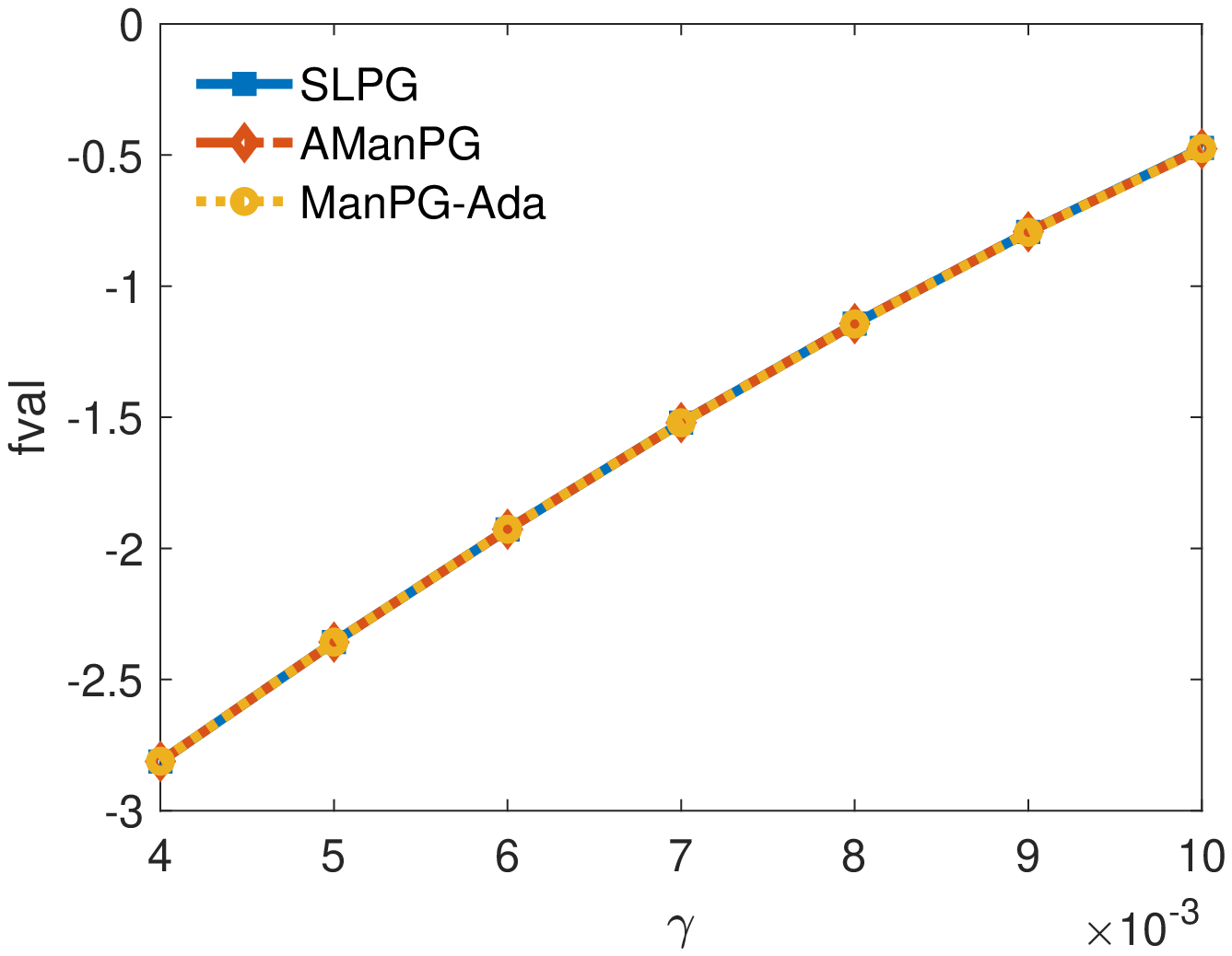}
				\label{Fig:SPCA_Prob3_item1}
			\end{minipage}
		}%
		\subfigure[$(n,p) = (5000,10)$]{
			\begin{minipage}[t]{0.33\linewidth}
				\centering
				\includegraphics[width=\linewidth,height=0.18\textheight]{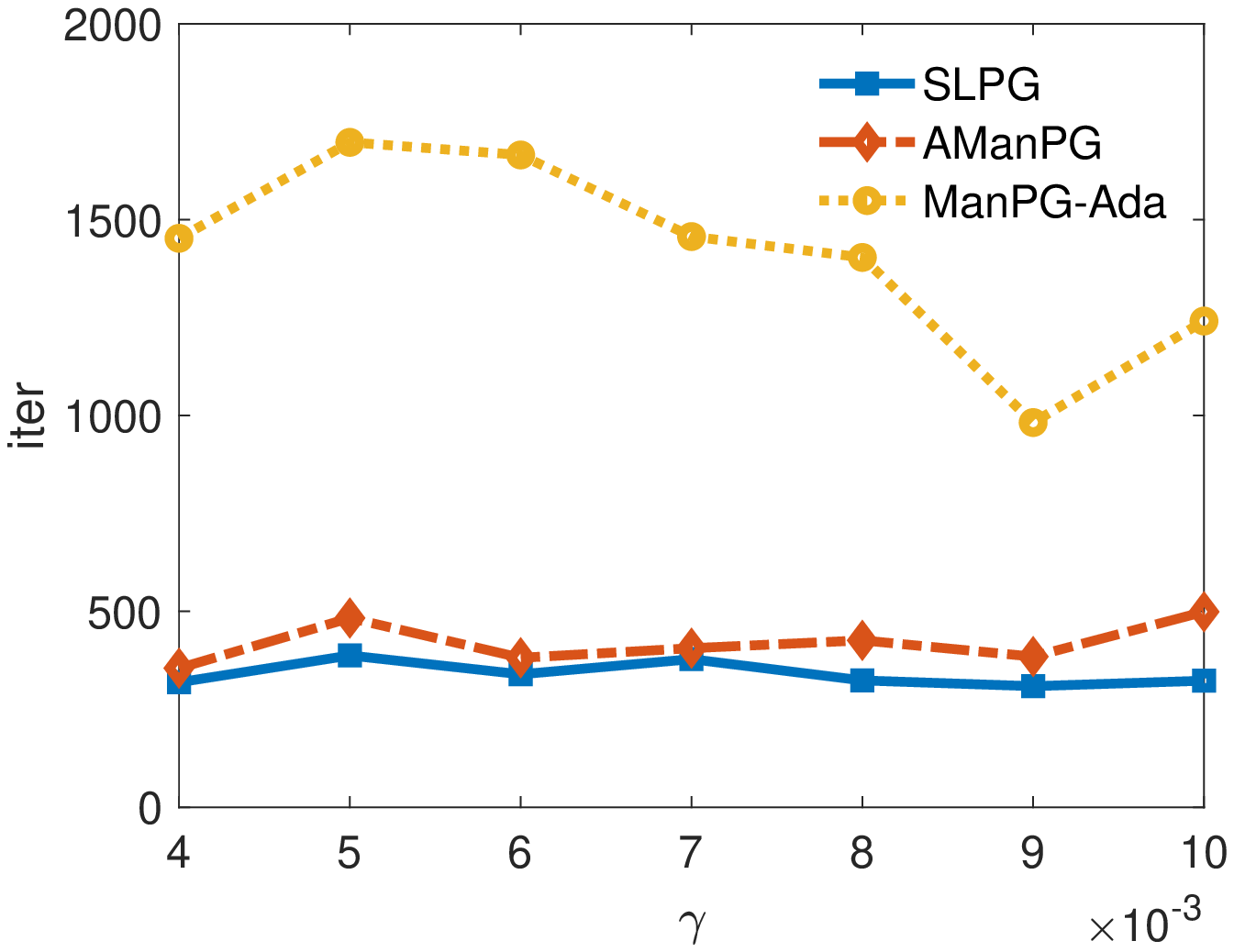}
				\label{Fig:SPCA_Prob3_item2}
			\end{minipage}
		}%
		\subfigure[$(n,p) = (5000,10)$]{
			\begin{minipage}[t]{0.33\linewidth}
				\centering
				\includegraphics[width=\linewidth,height=0.18\textheight]{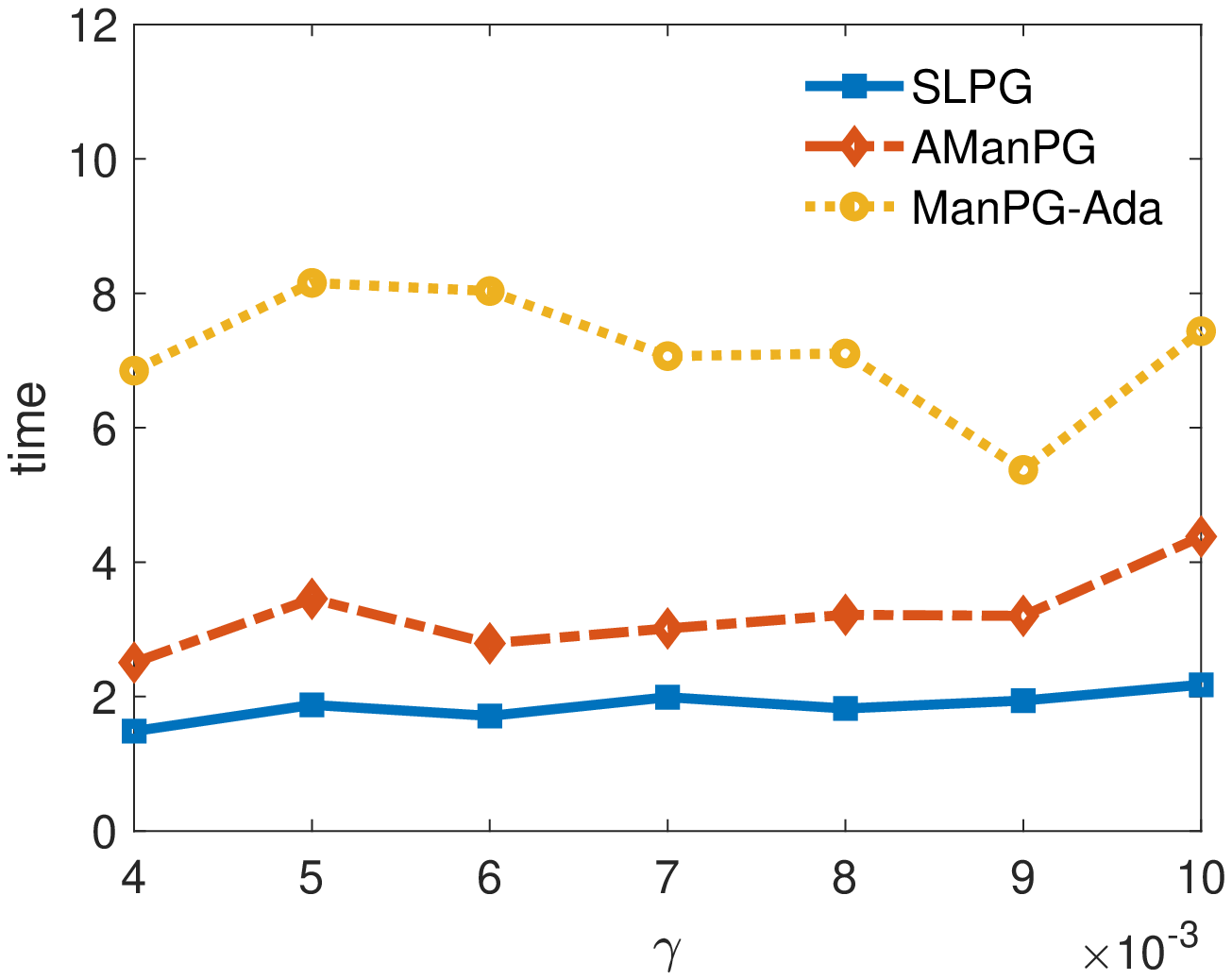}
				\label{Fig:SPCA_Prob3_item5}
			\end{minipage}
		}%
		
		\caption{A comparison among \SLPG, AManPG and ManPG-Ada in solving sparse PCA problems.}
		\label{Fig_Coefficients}
	\end{figure}

	\subsection{Kohn-Sham total energy minimization}
	In this subsection, we compare our algorithm with some state-of-the-art approaches in solving Problem \ref{Example_KS}. 
	The test problems are selected from the Kohn-Sham total energy minimization platform KSSOLV \cite{yang2009kssolv}, which is a MATLAB toolbox designed for electronic structure calculation. The algorithms in comparison include
	PCAL \cite{gao2019parallelizable} and PenCF \cite{xiao2020class}. 
	We compare all these algorithms in their default settings.
	We first study the numerical performance of \SLPGs and compare it with PCAL and PenCF 
	equipped with different penalty parameter $\beta$. 
	The performances of these algorithms are demonstrated in 
	Figure \ref{Fig_KS_iter}. We can learn that the performances
	of PCAL and PenCF  
	are sensitive to the penalty parameter $\beta$. Meanwhile,
	\SLPG is penalty-paramter-free and has comparable performs 
	with the
	other two algorithms equipped with fine-tuned penalty parameters.

	Finally, we comprehensively compare the performance of \SLPGs with more state-of-the-art algorithms, including 
	the projection-based feasible method with QR factorization as retraction
	(``ManOptQR" for short) from Manopt toolbox \citep{Absil2009optimization,boumal2014manopt}, OptM proposed by 
	Wen and Yin \cite{wen2013feasible}, PCAL and PenCF. 
	In this experiment, all the algorithms are run in their default settings. 
	We set the stopping criteria and the maximum number of iterations as $\norm{\nabla f(\Xk) - \Xk \Phi(\Xk\tp \nabla f(\Xk))}\ff \leq 10^{-7}$ and $1000$, respectively. Table \ref{Table_KS} illustrates 
	the performance of these algorithms on $8$ test problems 
	with respect to different molecules. The terms ``$E_{tot}$'',
	``Substationarity", ``Iteration", ``Feasibility violation" and ``CPU time" 
	stand for the function value, $\norm{\nabla f(X) - X\Lambda(X)}\ff$, the number of iterations, $\norm{X\tp X - I_p}\ff$, and the wall-clock running time, respectively. We can learn from Table \ref{Table_KS} that  \SLPGs is comparable with these state-of-the-art algorithms in the aspect of iterations and CPU time 
	in solving all the test problems.
	
	To sum up, from the above numerical experiments,  we can conclude that
	\SLPGs exhibits its robustness and efficiency comparing with the existing
	algorithms in solving both smooth and nonsmooth minimization over the 
	Stiefel manifold.

	\begin{figure}[!htbp]
		\centering
		\subfigure[benzene]{
			\begin{minipage}[t]{0.33\linewidth}
				\centering
				\includegraphics[width=\linewidth,height=0.18\textheight]{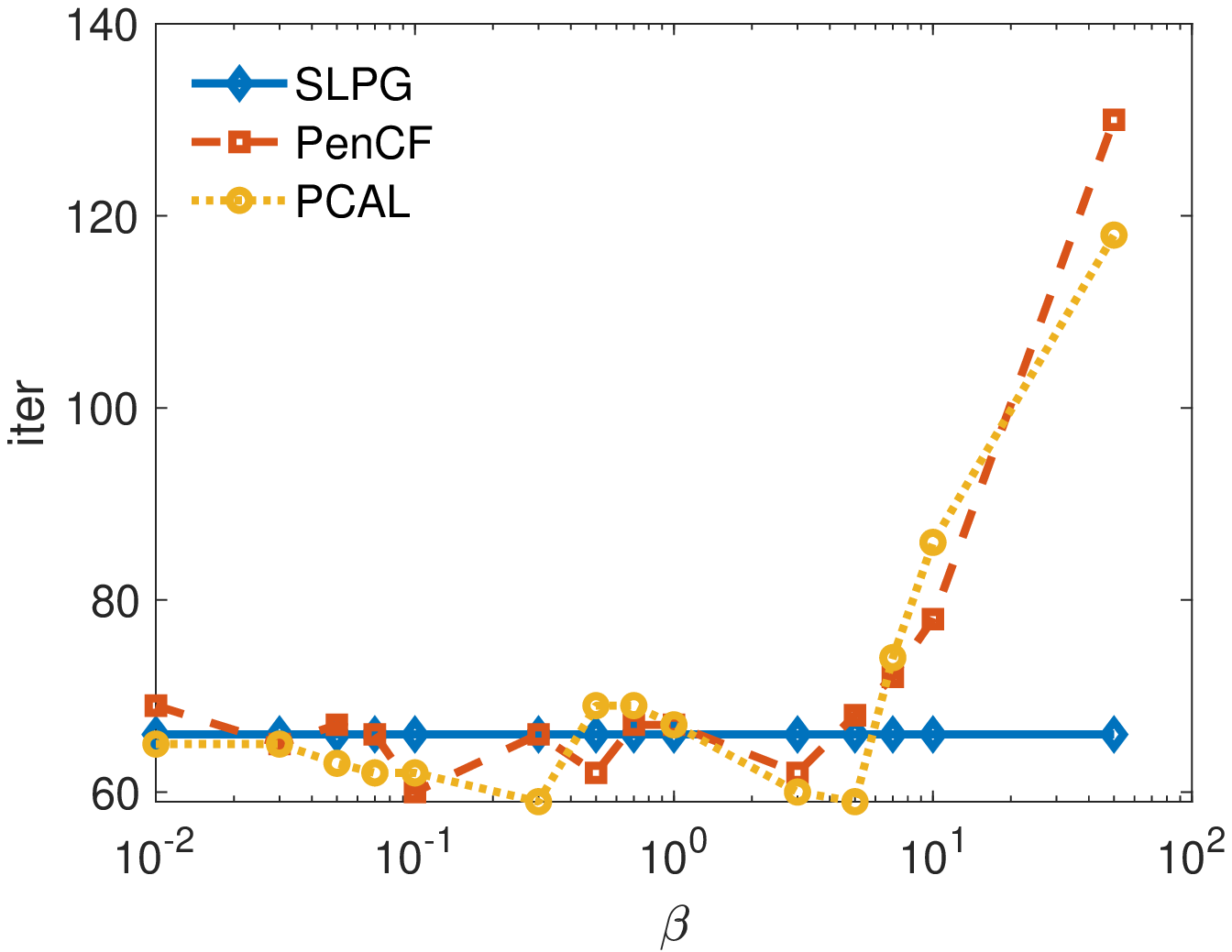}
				\label{Fig:KS_Prob1_item_1}
			\end{minipage}%
		}%
		\subfigure[ctube661]{
			\begin{minipage}[t]{0.33\linewidth}
				\centering
				\includegraphics[width=\linewidth,height=0.18\textheight]{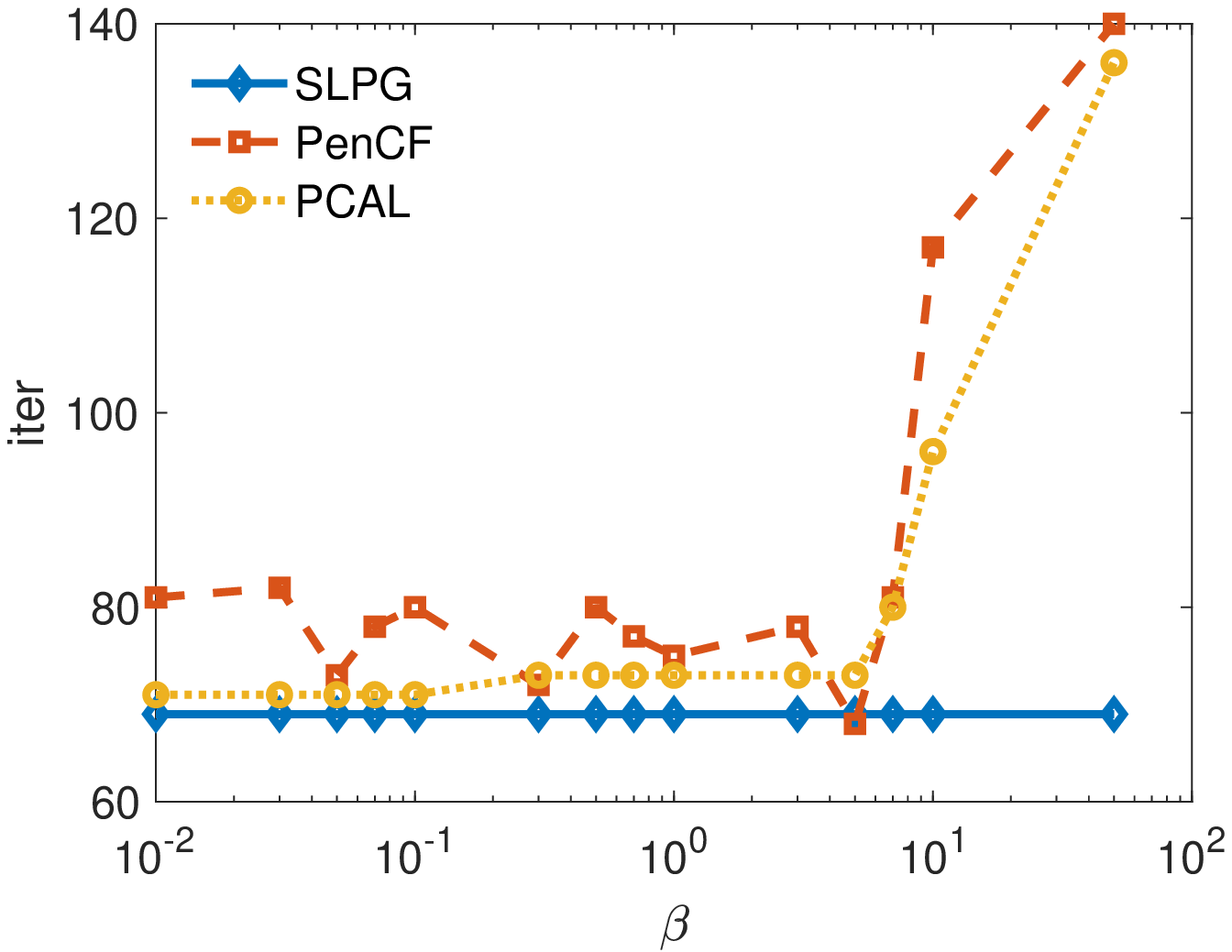}
				\label{Fig:KS_Prob2_item_1}
			\end{minipage}%
		}%
		\subfigure[glutamine]{
			\begin{minipage}[t]{0.33\linewidth}
				\centering
				\includegraphics[width=\linewidth,height=0.18\textheight]{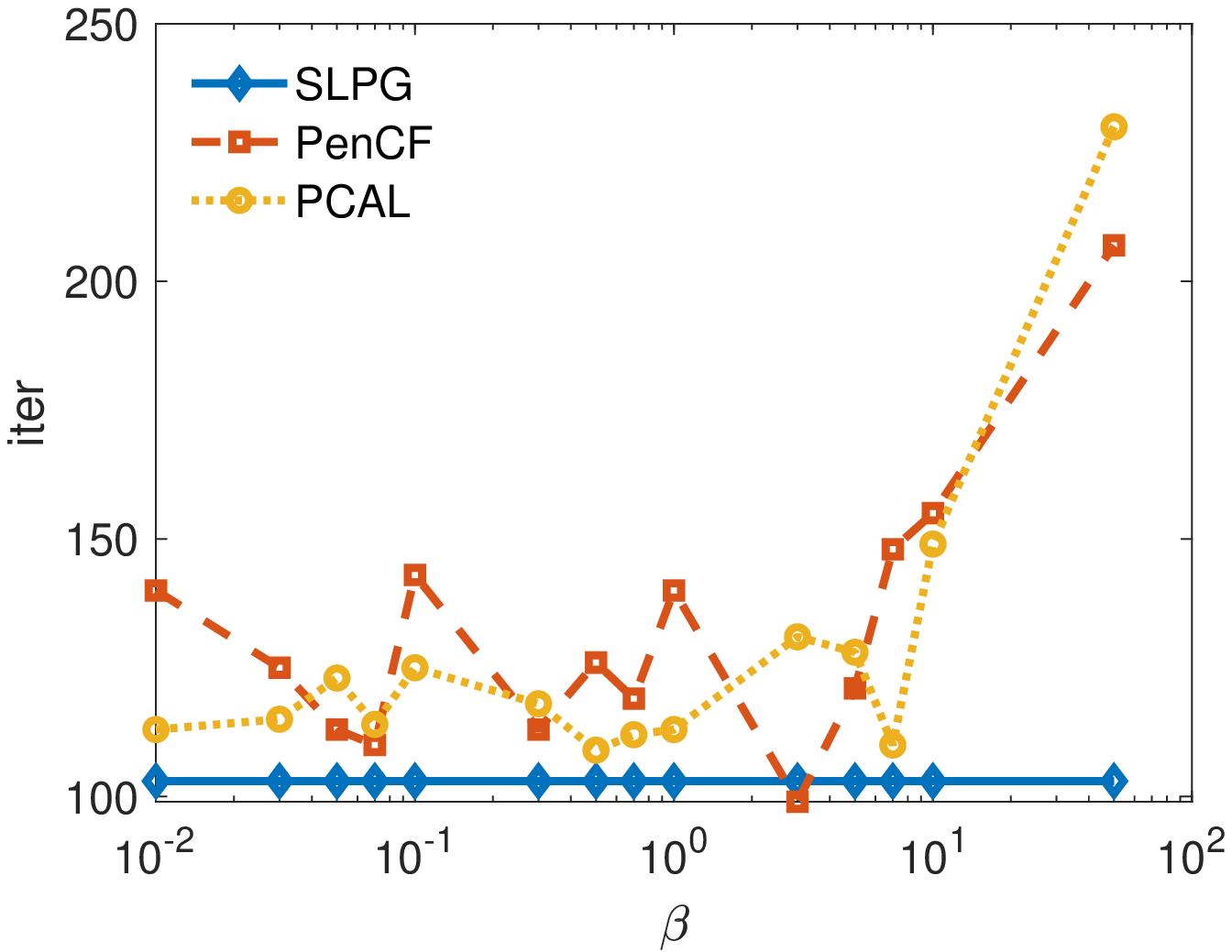}
				\label{Fig:KS_Prob3_item_1}
			\end{minipage}
		}%
		
		\subfigure[benzene]{
			\begin{minipage}[t]{0.33\linewidth}
				\centering
				\includegraphics[width=\linewidth,height=0.18\textheight]{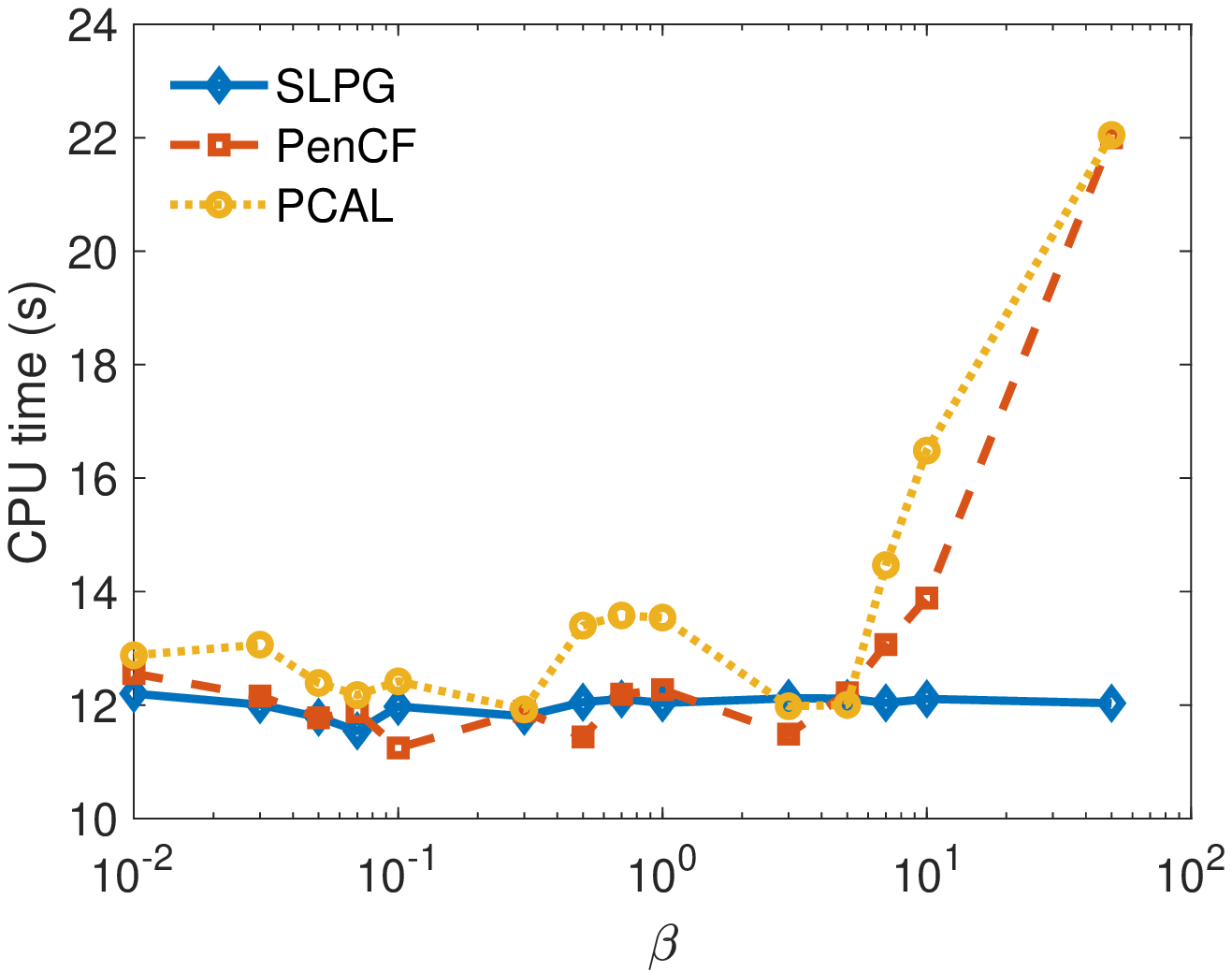}
				\label{Fig:KS_Prob1_item_2}
			\end{minipage}%
		}%
		\subfigure[ctube661]{
			\begin{minipage}[t]{0.33\linewidth}
				\centering
				\includegraphics[width=\linewidth,height=0.18\textheight]{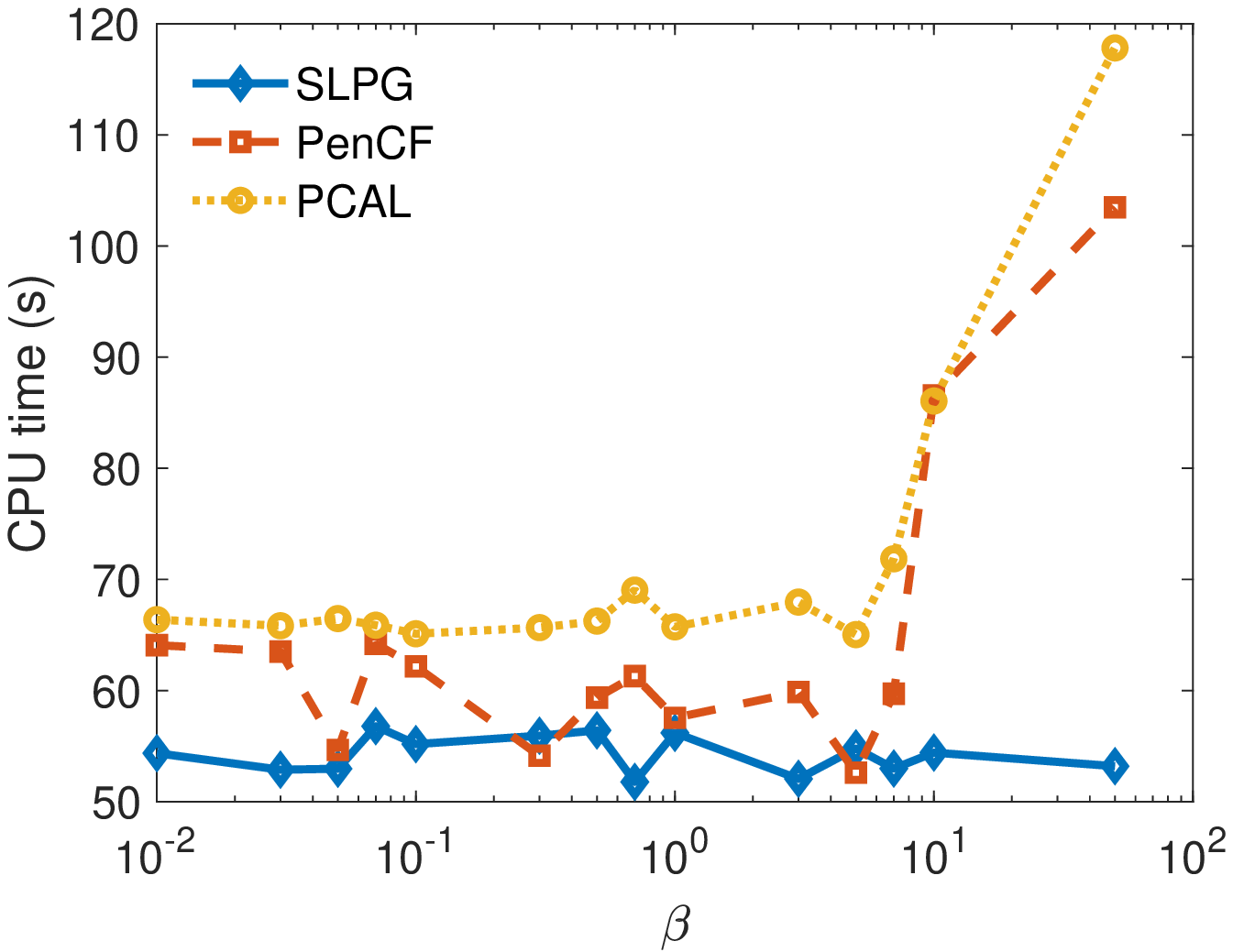}
				\label{Fig:KS_Prob2_item_2}
			\end{minipage}%
		}%
		\subfigure[glutamine]{
			\begin{minipage}[t]{0.33\linewidth}
				\centering
				\includegraphics[width=\linewidth,height=0.18\textheight]{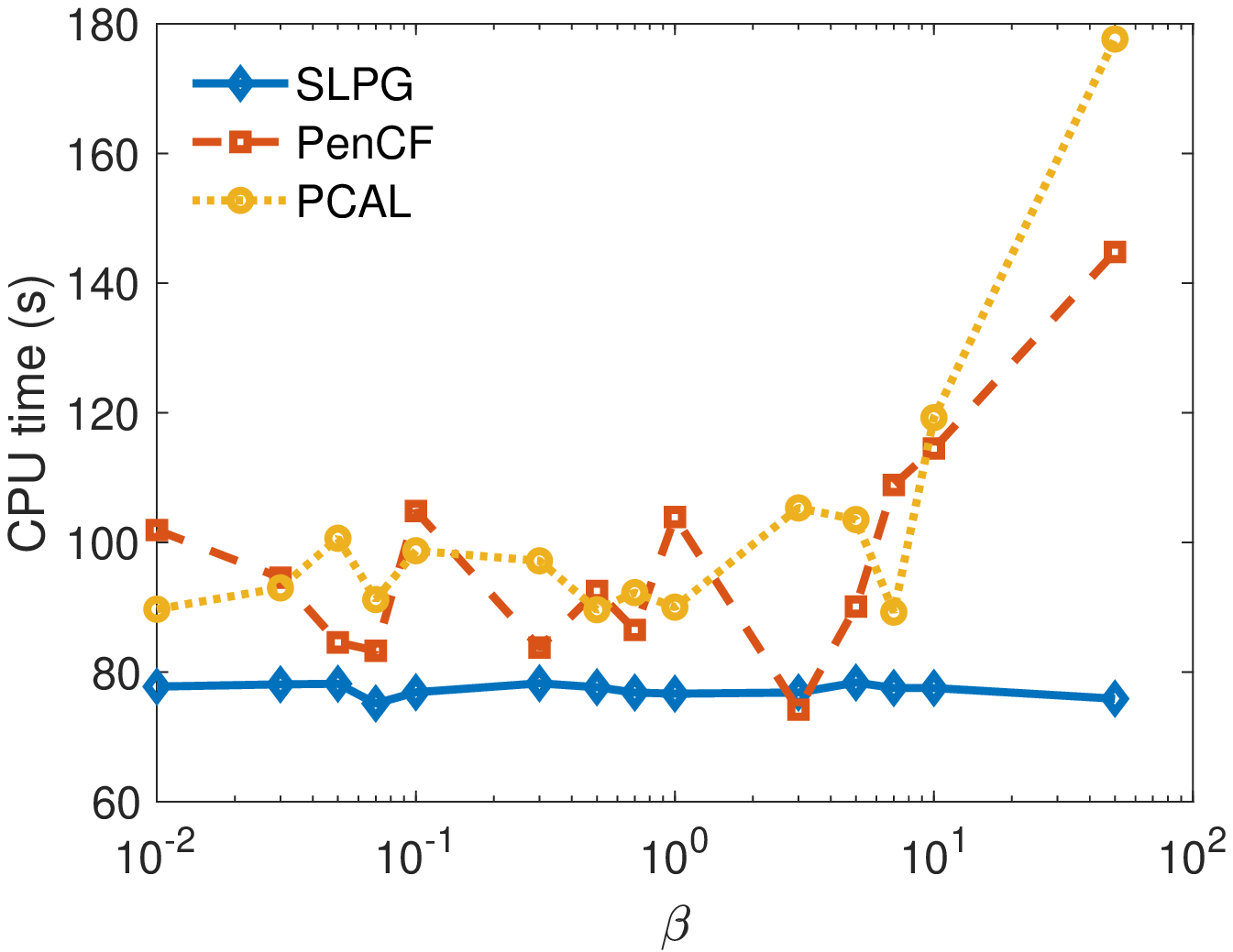}
				\label{Fig:KS_Prob3_item_2}
			\end{minipage}
		}%
		
		\subfigure[graphene16]{
			\begin{minipage}[t]{0.33\linewidth}
				\centering
				\includegraphics[width=\linewidth,height=0.18\textheight]{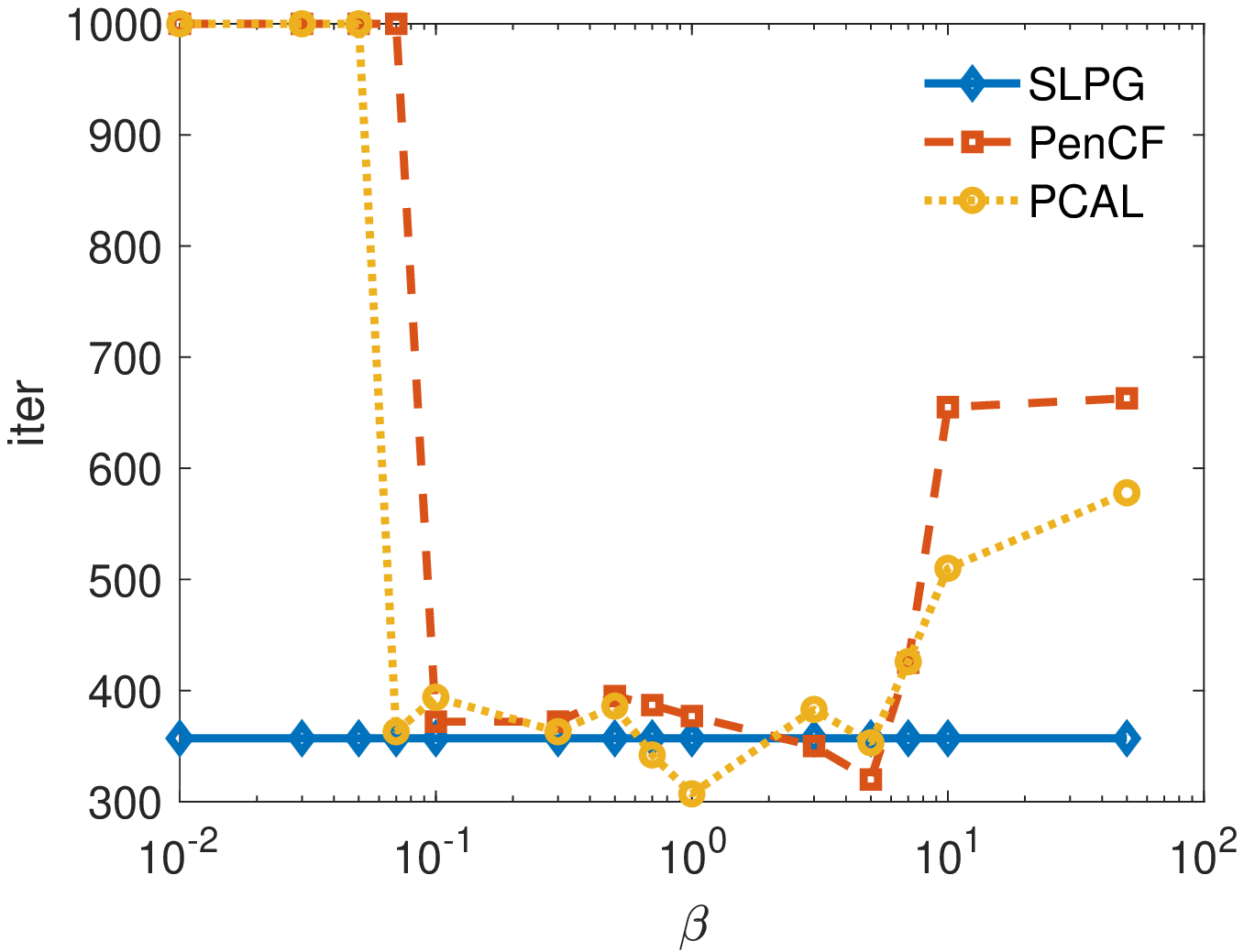}
				\label{Fig:KS_Prob4_item_1}
			\end{minipage}%
		}%
		\subfigure[C12H26]{
			\begin{minipage}[t]{0.33\linewidth}
				\centering
				\includegraphics[width=\linewidth,height=0.18\textheight]{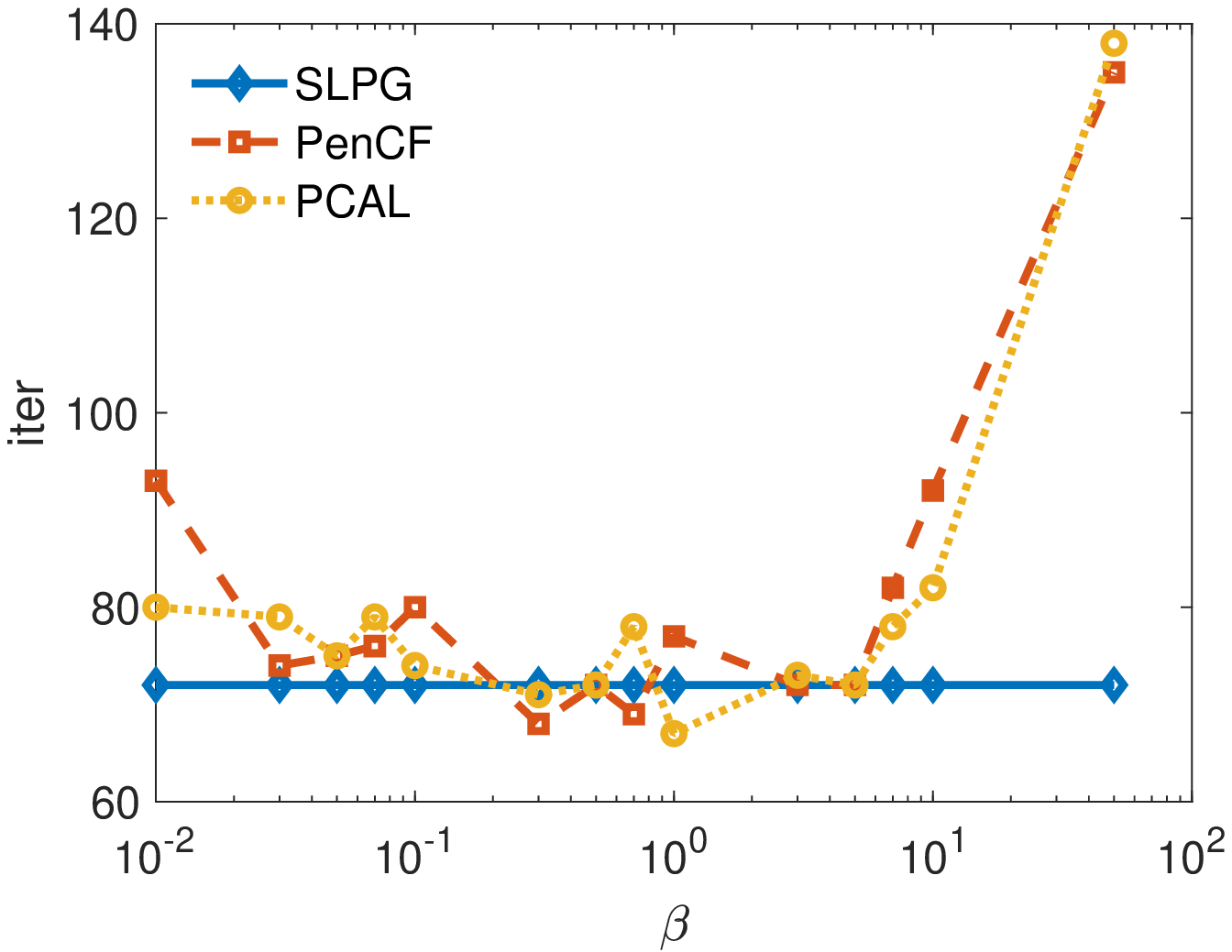}
				\label{Fig:KS_Prob5_item_1}
			\end{minipage}%
		}%
		\subfigure[pentacene]{
			\begin{minipage}[t]{0.33\linewidth}
				\centering
				\includegraphics[width=\linewidth,height=0.18\textheight]{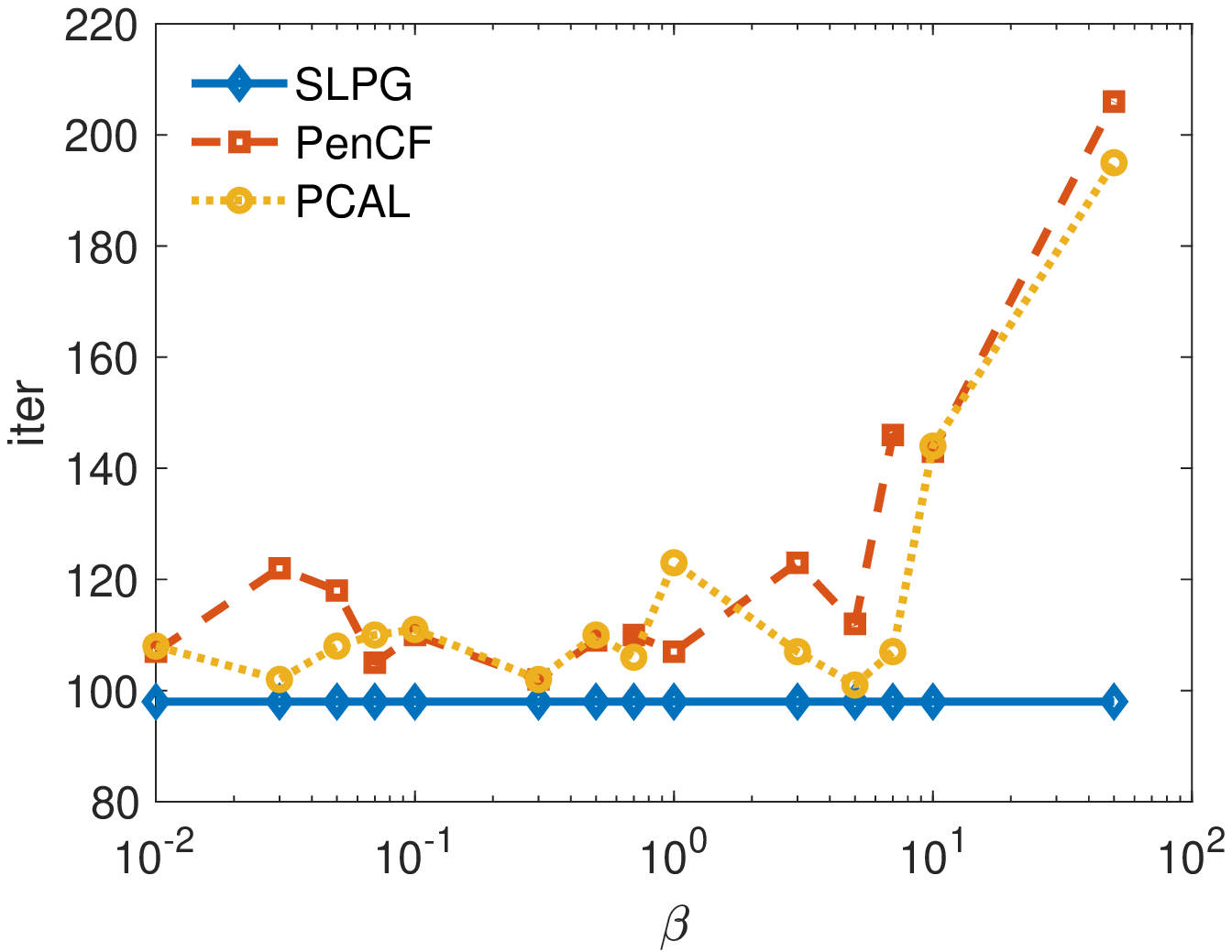}
				\label{Fig:KS_Prob6_item_1}
			\end{minipage}
		}%
		
		\subfigure[graphene16]{
			\begin{minipage}[t]{0.33\linewidth}
				\centering
				\includegraphics[width=\linewidth,height=0.18\textheight]{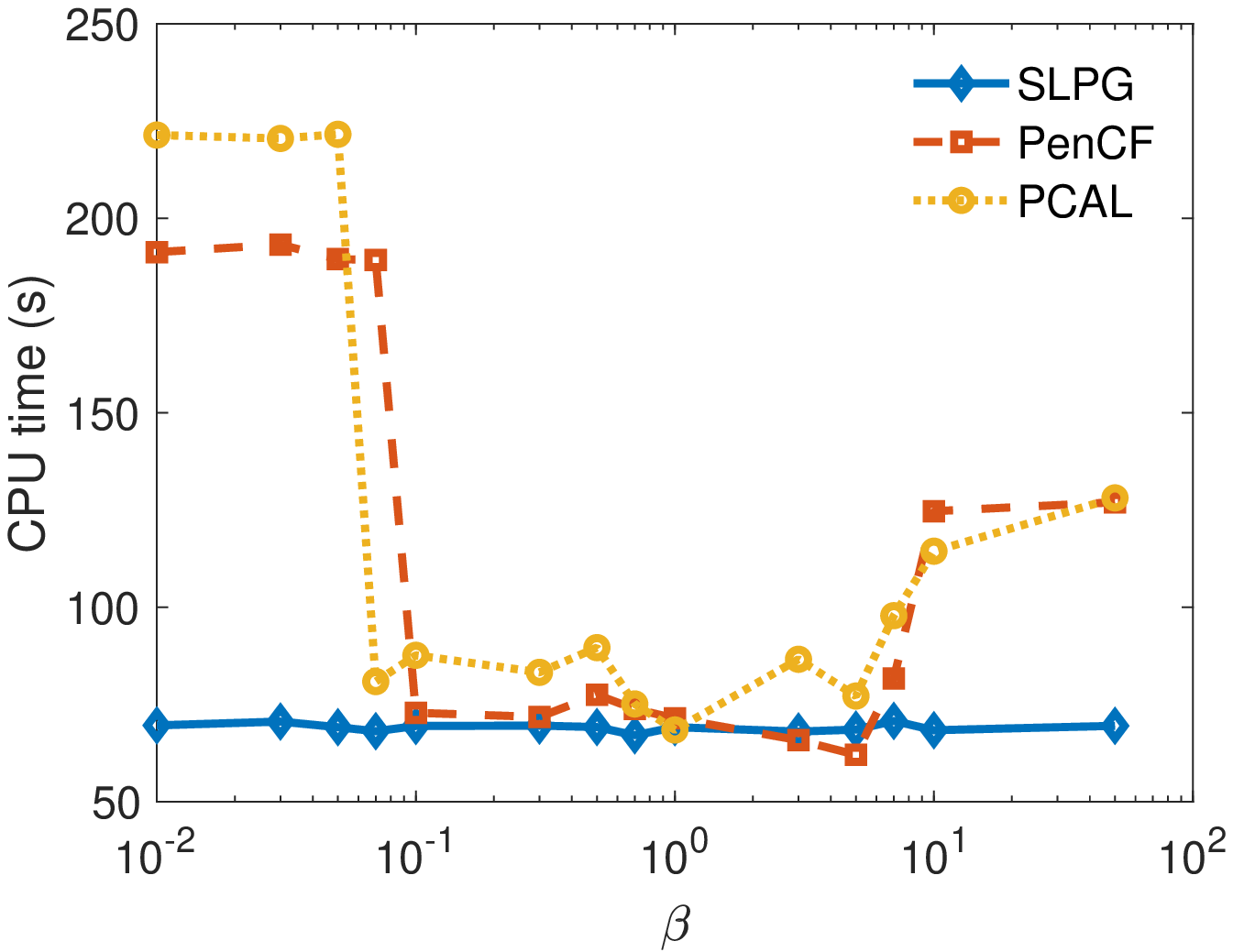}
				\label{Fig:KS_Prob4_item_2}
			\end{minipage}%
		}%
		\subfigure[C12H26]{
			\begin{minipage}[t]{0.33\linewidth}
				\centering
				\includegraphics[width=\linewidth,height=0.18\textheight]{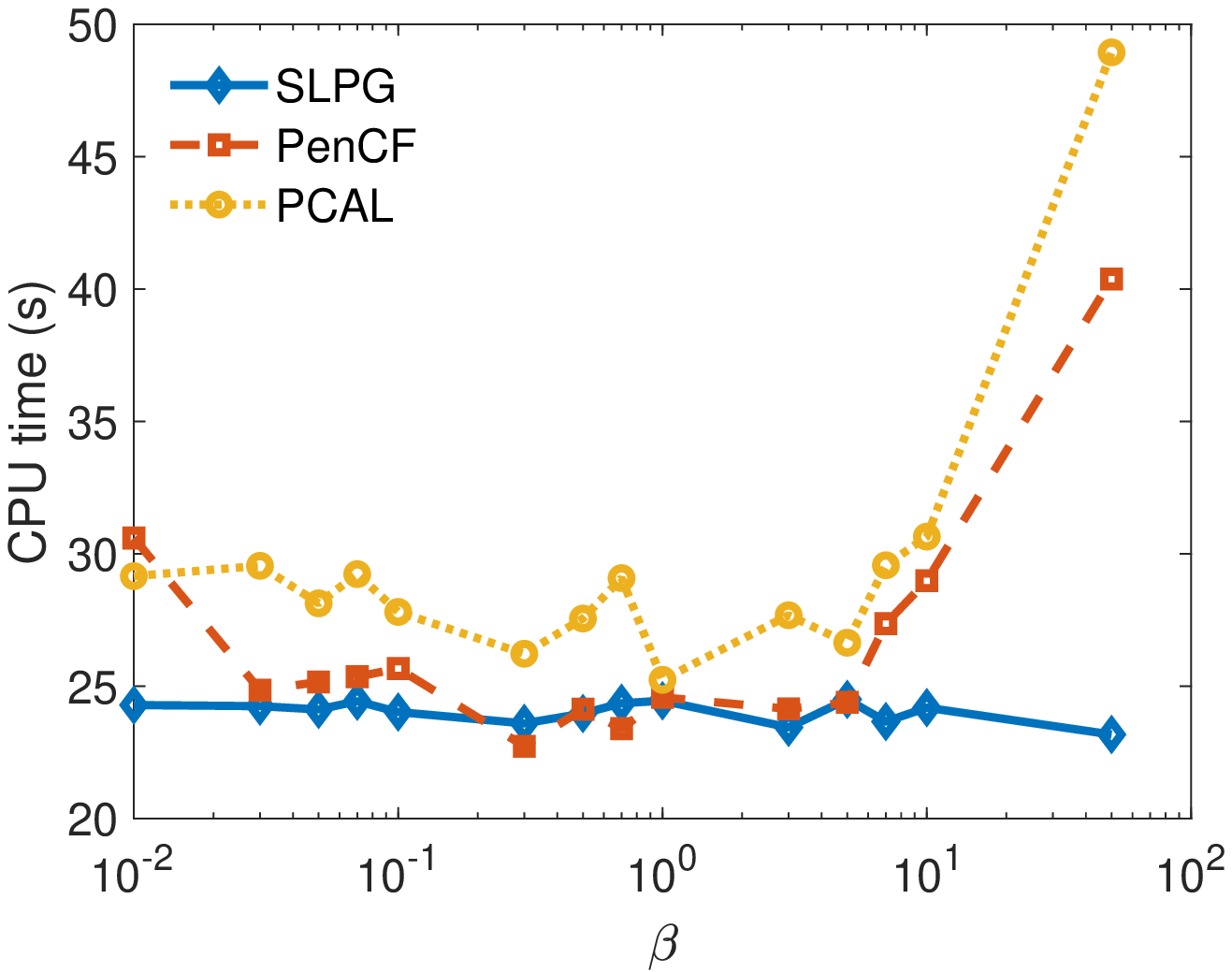}
				\label{Fig:KS_Prob5_item_2}
			\end{minipage}%
		}%
		\subfigure[pentacene]{
			\begin{minipage}[t]{0.33\linewidth}
				\centering
				\includegraphics[width=\linewidth,height=0.18\textheight]{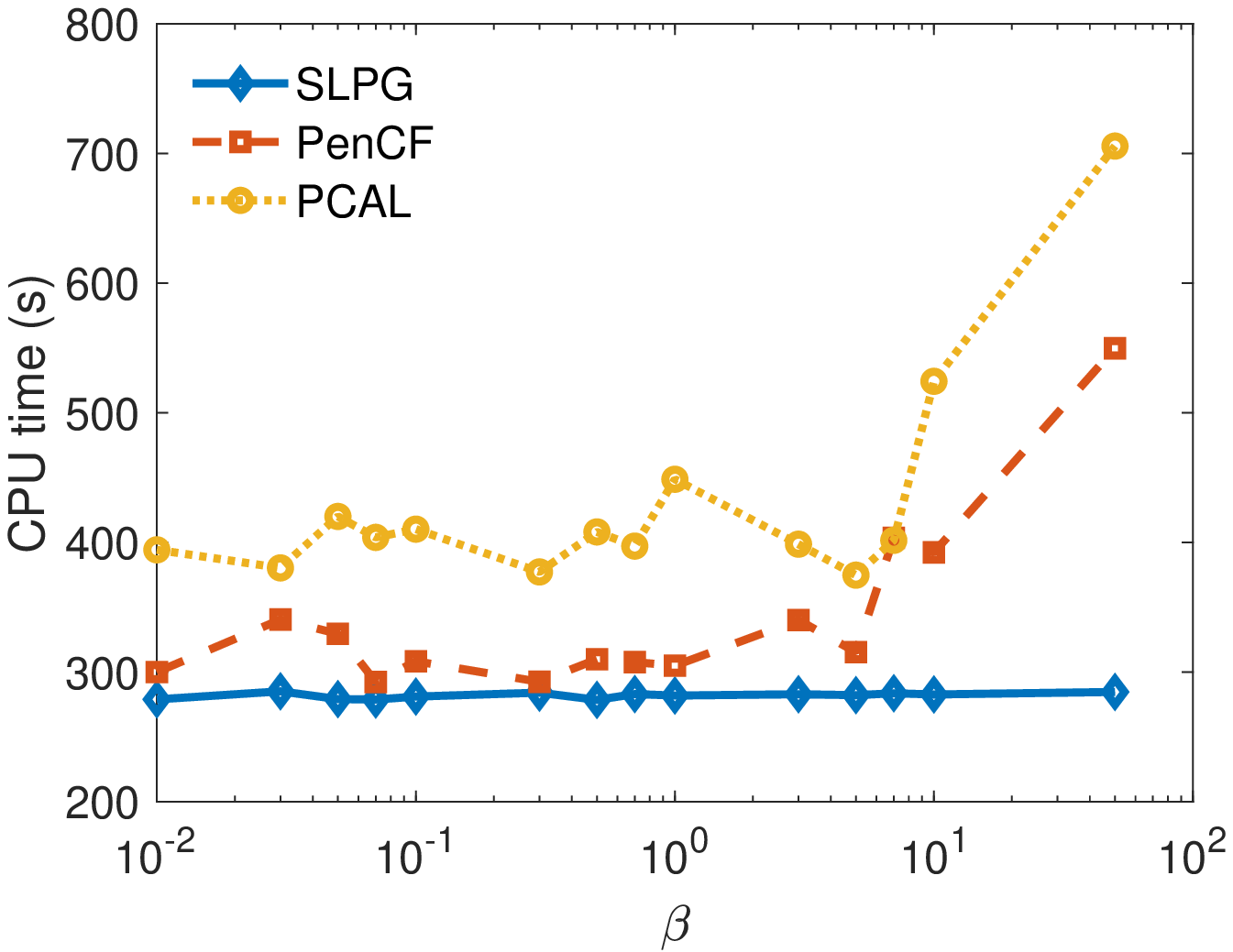}
				\label{Fig:KS_Prob6_item_2}
			\end{minipage}
		}%

		\caption{A detailed comparison  on the iterations and CPU time taken by \SLPG, PenCF and PCAL in KSSOLV.}
		
		\label{Fig_KS_iter}
	\end{figure}

	\begin{longtable}{@{}cccccc@{}}
		\toprule
		Solver                        & $E_{tot}$    & Substationarity & Iteration & Feasibility violation & CPU time(s) \\* \midrule
		\endfirsthead
		\multicolumn{6}{c}%
		{{\bfseries Table \thetable\ continued from previous page}} \\
		\endhead
		\bottomrule
		\endfoot
		\endlastfoot
		\multicolumn{6}{c}{alanine, $(n,p) = (12671, 18)$}    \\ \midrule 
		\multicolumn{1}{c|}{ManOptQR}  & -6.11e+01 &  9.88e-08 &    80 & 2.01e-15 &    24.16  \\ 
		\multicolumn{1}{c|}{OptM}  & -6.11e+01 &  2.15e-08 &    87 & 4.22e-14 &    26.35  \\ 
		\multicolumn{1}{c|}{PCAL}  & -6.11e+01 &  7.16e-08 &    97 & 2.80e-15 &    30.56  \\ 
		\multicolumn{1}{c|}{PenCF}  & -6.11e+01 &  2.83e-08 &    83 & 1.90e-15 &    24.10  \\  
		\multicolumn{1}{c|}{SLPG}  & -6.11e+01 &  7.15e-08 &    73 & 6.65e-16 &    21.27  \\  \midrule 
		\multicolumn{6}{c}{benzene, $(n,p) = (8407, 15)$}    \\ \midrule 
		\multicolumn{1}{c|}{ManOptQR}  & -3.72e+01 &  8.48e-08 &   163 & 2.07e-15 &    27.52  \\ 
		\multicolumn{1}{c|}{OptM}  & -3.72e+01 &  1.19e-08 &    82 & 2.46e-14 &    14.91  \\ 
		\multicolumn{1}{c|}{PCAL}  & -3.72e+01 &  6.88e-08 &    67 & 2.35e-15 &    13.17  \\ 
		\multicolumn{1}{c|}{PenCF}  & -3.72e+01 &  7.44e-08 &    67 & 2.57e-15 &    12.15  \\  
		\multicolumn{1}{c|}{SLPG}  & -3.72e+01 &  1.68e-08 &    66 & 8.66e-16 &    11.83  \\  \midrule 
		\multicolumn{6}{c}{c12h26, $(n,p) = (5709, 37)$}    \\ \midrule 
		\multicolumn{1}{c|}{ManOptQR}  & -8.15e+01 &  8.85e-08 &   439 & 5.06e-15 &   131.12  \\ 
		\multicolumn{1}{c|}{OptM}  & -8.15e+01 &  2.49e-08 &   105 & 8.50e-14 &    33.71  \\ 
		\multicolumn{1}{c|}{PCAL}  & -8.15e+01 &  8.46e-08 &    66 & 4.68e-15 &    25.23  \\ 
		\multicolumn{1}{c|}{PenCF}  & -8.15e+01 &  7.70e-08 &    81 & 4.72e-15 &    25.92  \\  
		\multicolumn{1}{c|}{SLPG}  & -8.15e+01 &  8.83e-08 &    72 & 1.34e-15 &    24.26  \\  \midrule 
		\multicolumn{6}{c}{ctube661, $(n,p) = (12599, 48)$}    \\ \midrule 
		\multicolumn{1}{c|}{ManOptQR}  & 2.51e+01 &  2.45e+01 &  1000 & 4.22e-15 &   743.02  \\ 
		\multicolumn{1}{c|}{OptM}  & -1.34e+02 &  8.64e-09 &   108 & 4.89e-15 &    90.88  \\ 
		\multicolumn{1}{c|}{PCAL}  & -1.34e+02 &  9.54e-08 &    73 & 4.80e-15 &    68.42  \\ 
		\multicolumn{1}{c|}{PenCF}  & -1.34e+02 &  5.04e-08 &    76 & 5.04e-15 &    60.32  \\  
		\multicolumn{1}{c|}{SLPG}  & -1.34e+02 &  7.67e-08 &    69 & 1.36e-15 &    56.94  \\  \midrule 
		\multicolumn{6}{c}{glutamine, $(n, p) = (16517, 29)$}    \\ \midrule 
		\multicolumn{1}{c|}{ManOptQR}  & -9.18e+01 &  7.03e-08 &   180 & 3.20e-15 &   138.08  \\ 
		\multicolumn{1}{c|}{OptM}  & -9.18e+01 &  1.39e-08 &   129 & 3.29e-15 &   102.58  \\ 
		\multicolumn{1}{c|}{PCAL}  & -9.18e+01 &  6.67e-08 &   108 & 3.08e-15 &    88.68  \\ 
		\multicolumn{1}{c|}{PenCF}  & -9.18e+01 &  9.35e-08 &   109 & 3.16e-15 &    83.01  \\  
		\multicolumn{1}{c|}{SLPG}  & -9.18e+01 &  9.06e-08 &   104 & 9.50e-16 &    78.98  \\  \midrule 
		\multicolumn{6}{c}{graphene16, $(n,p) = (3071, 37)$}    \\ \midrule 
		\multicolumn{1}{c|}{ManOptQR}  & -9.40e+01 &  8.74e-08 &   326 & 4.24e-15 &    67.13  \\ 
		\multicolumn{1}{c|}{OptM}  & -9.40e+01 &  2.34e-08 &   313 & 4.34e-15 &    66.44  \\ 
		\multicolumn{1}{c|}{PCAL}  & -9.40e+01 &  4.81e-08 &   416 & 4.41e-15 &    94.70  \\ 
		\multicolumn{1}{c|}{PenCF}  & -9.40e+01 &  3.74e-08 &   327 & 4.18e-15 &    65.00  \\  
		\multicolumn{1}{c|}{SLPG}  & -9.40e+01 &  9.53e-08 &   286 & 1.22e-15 &    56.71  \\  \midrule 
		\multicolumn{6}{c}{pentacene, $(n,p) = (44791, 51)$}    \\ \midrule 
		\multicolumn{1}{c|}{ManOptQR}  & -1.31e+02 &  9.20e-08 &   150 & 4.79e-15 &   425.62  \\ 
		\multicolumn{1}{c|}{OptM}  & -1.31e+02 &  2.37e-08 &   126 & 4.52e-15 &   374.43  \\ 
		\multicolumn{1}{c|}{PCAL}  & -1.31e+02 &  8.62e-08 &   111 & 4.06e-15 &   365.80  \\ 
		\multicolumn{1}{c|}{PenCF}  & -1.31e+02 &  6.40e-08 &   109 & 4.56e-15 &   301.13  \\  
		\multicolumn{1}{c|}{SLPG}  & -1.31e+02 &  9.58e-08 &   113 & 1.25e-15 &   308.88  \\  \midrule 
		\multicolumn{6}{c}{ptnio, $(n,p) = (4069, 43)$}    \\ \midrule 
		\multicolumn{1}{c|}{ManOptQR}  & -2.26e+02 &  8.62e-08 &   661 & 3.90e-15 &   166.45  \\ 
		\multicolumn{1}{c|}{OptM}  & -2.26e+02 &  2.49e-08 &   662 & 3.96e-15 &   171.86  \\ 
		\multicolumn{1}{c|}{PCAL}  & -2.26e+02 &  9.53e-08 &   596 & 3.75e-15 &   169.96  \\ 
		\multicolumn{1}{c|}{PenCF}  & -2.26e+02 &  8.54e-08 &   508 & 3.98e-15 &   123.19  \\  
		\multicolumn{1}{c|}{SLPG}  & -2.26e+02 &  6.70e-08 &   706 & 1.19e-15 &   169.87  \\  \midrule 
		\multicolumn{6}{c}{si64, $(n,p) = (6451, 128)$}    \\ \midrule 
		\multicolumn{1}{c|}{ManOptQR}  & 1.58e+02 &  2.88e+01 &  1000 & 8.02e-15 &  2413.09  \\ 
		\multicolumn{1}{c|}{OptM}  & -2.53e+02 &  2.18e-08 &   124 & 1.03e-14 &   328.57  \\ 
		\multicolumn{1}{c|}{PCAL}  & -2.53e+02 &  9.97e-08 &    74 & 1.02e-14 &   227.23  \\ 
		\multicolumn{1}{c|}{PenCF}  & -2.53e+02 &  7.48e-08 &    68 & 1.03e-14 &   179.30  \\  
		\multicolumn{1}{c|}{\SLPG}  & -2.53e+02 &  9.73e-08 &    74 & 2.25e-15 &   194.01  \\  \midrule 
		\multicolumn{6}{c}{si8, $(n,p) = (799, 16)$}    \\ \midrule 
		\multicolumn{1}{c|}{ManOptQR}  & -3.13e+01 &  9.39e-08 &   394 & 2.33e-15 &    38.27  \\ 
		\multicolumn{1}{c|}{OptM}  & -3.13e+01 &  2.20e-08 &   182 & 1.73e-15 &    18.66  \\ 
		\multicolumn{1}{c|}{PCAL}  & -3.13e+01 &  8.23e-08 &    73 & 2.16e-15 &     8.27  \\ 
		\multicolumn{1}{c|}{PenCF}  & -3.13e+01 &  9.14e-08 &    90 & 1.73e-15 &     9.22  \\  
		\multicolumn{1}{c|}{\SLPG}  & -3.13e+01 &  8.54e-08 &    72 & 5.81e-16 &     7.60  \\  \midrule
		\bottomrule[.4mm] 
		\caption{The results in Kohn-Sham total energy minimization}
		\label{Table_KS}
	\end{longtable}

	\section{Conclusion}
	
	In this paper, we have presented a penalty-free infeasible approach 
	called  \SLPGs for solving optimization problems
	over the Stiefel manifold with possibly nonsmooth objective functions.
	Our \SLPGs has two main steps. The first step is to solve a linearized proximal approximation in an affine subspace, which reduces to a tangent space
	of the Stiefel manifold
	if the iterate is feasible. We suggest to adopt
	a fixed point iteration to solve this tangential subproblem.
	Particularly, when the objective function is smooth or of $\ell_{2,1}$ regularization term, we can adopt
	an empirical direct approach to inexactly solve the tangential subproblem instead of the fixed point iteration. 
	The other step is to approximate the orthonormalization procedure by a cheap normal step, which is inspired from
	the Taylor expansion of the polar decomposition. The main advantages of our approach lie in
	the following three aspects. Firstly, we adopt an infeasible framework which is of better scalability than those 
	manifold-based approaches. Secondly, compared with the existing infeasible approaches, 
	\SLPGs does not invoke any penalty function, and hence the sensitivity of the performance to the choice of 
	penalty parameters is naturally eliminated. Thirdly, numerical experiments demonstrate the great potential of
	\SLPGs in solving \eqref{Prob_Ori} with both smooth and nonsmooth objective functions. In addition, we 
	have established the global convergence results for \SLPG.

	\bibliography{ref}
	\bibliographystyle{plainnat}

\end{document}